\newtheorem{dfn}{Definition}[section]
\newtheorem{thm}[dfn]{Theorem}
\newtheorem{lem}[dfn]{Lemma}
\newtheorem{cor}[dfn]{Corollary}
\newtheorem{prop}[dfn]{Proposition}\makeatletter
\newcommand{\y}{{\mathbbm y}}
\newcommand{\x}{{\mathbbm x}}
\newcommand{\Y}{\mathbb{Y}}
\newcommand{\dis}{\displaystyle}
\newcommand{\ve}{\varepsilon}
\begin{document}
\title{Branching random walks in random environment and super-Brownian motion in random environment}
\author{Makoto Nakashima \footnote{nakamako@math.tsukuba.ac.jp, Division of Mathematics, Graduate School of Pure and Applied Sciences,University of Tsukuba, 1-1-1 Ten-noudai, Tsukuba-shi, Ibaraki-ken, Japan } }
\date{}
\maketitle

\begin{abstract}
We focus on the existence and  characterization of the limit for a certain critical branching random walks in time-space random environment in one dimension which was introduced in \cite{BGK}. Each particle performs simple random walk on $\mathbb{Z}$ and branching mechanism depends on the time-space site. The weak limit of this measure valued processes is  characterized as a solution of the non-trivial martingale problem and called super-Brownian motions in random environment in \cite{Myt}. Moreover, we will show the weak uniqueness of the solutions with some initial condition.
\end{abstract}

\vspace{1em}
{\bf AMS 2000 Subject Classification:} Primary 60H15, 60J68, 60J80, 60K37

\vspace{1em}{\bf Key words:} Superprocesses in random environment, Branching random walks in random environment, Stochastic heat equations, Uniqueness

\vspace{1em}
We denote by  $(\Omega, {\cal F},P )$ a probability space. Let $\mathbb{N}=\{0,1,2,\cdots\}$, $\mathbb{N}^*=\{1,2,3,\cdots\}$, and $\mathbb{Z}=\{0,\pm 1,\pm 2,\cdots\}$.

\section{Introduction}\label{0}

Super-Brownian motion(SBM) is a measure valued process which was introduced by Dawson and Watanabe independently\cite{Daw,Wat} and is  obtained as the limit of (asymptotically) critical  branching Brownian motions (or branching random walks). There are many books for introduction of super-Brownian motion \cite{Daw1,Eth} and dealing with several aspects of it \cite{Dyn1,Dyn2,LeG,Per}. Also, super-Brownian motion appears in physics and population genetics.

An example of the construction is the following, where we always treat Euclidean space as the space, $\mathbb{R}^d$ in this paper.

We assume that at time $0$, there are $N$ particles in $\mathbb{Z}^d$ as the $0$-th generation particle. Each of $N$ particles chooses independently of each others a nearest neighbor site uniformly, moves there at time $1$, and then each particle independently of each others either dies or split into two particles with probability $1/2$ (1st generation). The newly produced particles in $n$-th generation  perform in the same manner, that is each of them chooses independently of each others a nearest neighbor site uniformly, moves there at time $n+1$, and then each particle independently of each others  either dies or split into 2 particles with probability $1/2$.

Let $X^{(N)}_{t}(\cdot)$ be the measure-valued Markov processes defined by \begin{align}
{\dis X_{t}^{(N)}(B)=\frac{\sharp \left\{			\text{particles in $B\sqrt{N}$ at  $\lfloor tN\rfloor$-th generation at time $tN$		}		\right\}	}{N}},\notag
\end{align}
where $B\in {\cal B}(\mathbb{R}^d)$ are Borel sets in $\mathbb{R}^{d}$ and $B\sqrt{N}=\{x=y\sqrt{N}\text{ for }y\in {B} \}$. Then, under some conditions, they converge as $N\to \infty$ to a measure-valued processes, \textit{super-Brownian motion}.  In particular, the limit $X_{t}(\phi)$ is characterized as the unique solution of the martingale problem:\begin{align}
\begin{cases}
\text{For all }\phi\in {\cal D}(\Delta),\\
{\dis \quad \quad Z_{t}(\phi):=X_{t}(\phi)-X_{0}(\phi)-\int_{0}^t \frac{1}{2d}X_{s}\left({\Delta \phi}\right)ds}\\
\text{is an ${\cal F}_{t}^X$-continuous square-integrable martingale}\\
{\dis \quad\quad Z_{0}(\phi)=0\ \text{ and } \left\langle Z(\phi)		\right\rangle_t=\int_{0}^tX_{s}(\phi^2)ds},
\end{cases}\label{SBM}
\end{align}
where $\nu(\phi)=\int \phi d\nu$ for any measure $\nu$.

It is a well-known fact  that one-dimensional super-Brownian motion is related to stochastic heat equation(\cite{KoShi,Rei}). When $d=1$, super-Brownian motion $X_t(dx)$ is almost surely absolutely continuous with respect to Lebesgue measure and its density $u(t,x)$ satisfies the following stochastic heat equation:\begin{align*}
\frac{\partial }{\partial t}u=\frac{1}{2}\Delta u+\sqrt{u}\dot{W}(t,x),
\end{align*}
where $\dot{W}(t,x)$ is space-time white noise. On the other hand, for $d\geq 2$, $X_t(\cdot)$ is almost singular with respect to Lebesgue measure.(\cite{DawPer,LeGPerTay,Per1,Per2})

In this paper, we consider super-Brownian motion in random environment, which are introduced in \cite{Myt}. Mytnik showed the existence and uniqueness of the scaling limit $X_{t}(\cdot)$ for a certain critical branching diffusion in random environment with some conditions. It is characterized as the unique solution of the martingale problem: \begin{align}
\begin{cases}
\text{For all }\phi\in {\cal D}(\Delta),\\
{\dis \quad \quad Z_{t}(\phi):=X_{t}(\phi)-X_{0}(\phi)-\int_{0}^t \frac{1}{2}X_{s}\left({\Delta \phi}\right)ds}\\
\text{is an ${\cal F}_{t}^X$-continuous square-integrable martingale and}\\
{\dis \quad\quad \left\langle Z(\phi)		\right\rangle_t=\int_{0}^tX_{s}(\phi^2)ds+\int_{0}^t	\int_{\mathbb{R}^d\times \mathbb{R}^d}g(x,y)\phi(x)\phi(y)X_{s}(dx)X_{s}(dy)ds}	,
\end{cases}\label{mytsbm}
\end{align}
where  $g(\cdot,\cdot)$ is bounded continuous function in a certain class. 
In this paper, we construct a super-Brownian motion in random environment as a limit point of scaled branching random walks in random environment, which is a solution of (\ref{mytsbm}) for the case where $g(x,y)$ is replaced by $\delta_{x,y}$. The definition of such martingale problem is formal. The rigorous definition will be given later.



\section{Branching random walks in random environment}
Before giving the system of the branching random walks in random environment, we introduce Ulam-Harris tree ${\cal T}$ for labeling the particles. We set $T_{k}=\left(\mathbb{N}^*\right)^{k+1}$ for $k\geq 1$. Then, Ulam-Harris tree ${\cal T}$ is defined by $\displaystyle {\cal T}=\bigcup_{k\geq 0}T_{k}$.

We will give a name to each particle by using elements of ${\cal T}$.\begin{enumerate}[i)]
\item When there are $M$ particles at the $0$-th generation, we label them as $1,2,\cdots,M\in T_{0}$.
\item If the $n$-th generation particle $\x=(x_{0},\cdots,x_{n})\in T_{n}$ gives birth to $k_{\x}$ particles, then we name them as $(x_{0},\cdots,x_{n},1),\cdots,(x_{0},\cdots,x_{n},k_{\x})\in T_{n+1}$.
\end{enumerate}
Thus, every particle in the branching systems has its own name in ${\cal T}$. We define  $|\x|$ by its generation, that is if $\x$ is an element of $T_{k}$, then $|\x|=k$. For convenience, we denote by $|\x\wedge \y|$ the generation of the closest common ancestor of $\x$ and $\y$. If $\x$ and $\y$ have no common ancestor, then we define $|\x\wedge \y|=-\infty$.  Also, we denote by $\y/\x$ the last digit of $\y$ when $\y$ is a child of $\x$, that is  \begin{align*}
\y/\x=\begin{cases}
k_{\y},\ \ \text{if }\begin{subarray}{l}				
\x=(x_0,\cdots,x_n)\in T_n,\\
\y=(x_0,\cdots,x_n,k_{\y})\in T_{n+1},
\end{subarray}\ \  \text{for some $n\in \mathbb{N}$,}
\\
\infty,\ \ \ \ \text{otherwise.}
\end{cases}
\end{align*}

Now, we give the definition of branching random walks in random environment. In our case, particle move on $\mathbb{Z}$ and the process evolves by the following rule:
\begin{enumerate}[i)]
\item The initial particles locate at site $\{x_i\in 2\mathbb{Z}:i=1,\cdots,M_N\}$.
\item Each particle located at site $x$ at  time $n$ chooses a nearest neighbor site  independently of each others  with probability $\frac{1}{2}$ and moves there at time $n+1$. Simultaneously, it is replaced by $k$-children with probability $q^{(N)}_{n,x}(k)$
 independently of each others,
 \end{enumerate}
where $\left\{\left\{q^{(N)}_{n,x}(k)\right\}_{k=0}^{\infty}:(n,x)\in \mathbb{N}\times \mathbb{Z}\right\}$ are the offspring distributions assigned to each time-space site $(n,x)$ which are i.i.d.\,in $(n,x)$.
We denote by $B_{n}^{(N)}$ and by $B^{(N)}_{n,x}$ the total number of particles at time $n$ and the local number of particles at site $x$ at time $n$. Also, we denote by $m^{(N,p)}_{n,x}$ the $p$-th moment of offsprings for offspring distribution $\{q_{n,x}^{(N)}(k)\}$, that is \begin{align}
m_{n,x}^{(N,p)}=\sum_{k=0}^{\infty}k^{p}q_{n,x}^{(N)}(k).\notag
\end{align}

This model is called branching random walks in random  environment (BRWRE) whose properties as measure valued processes is for ``supercritical" case are studied well \cite{HN,HNY}. Also, the continuous counterpart, branching Brownian motions in random environment is introduced by Shiozawa\cite{Shi1,Shi2}. We know that  the normalized random measure weakly converges to Gaussian measure in probability in one phase, whereas the localization has occurred in the other phase.

In this paper,  we focus on the scaled measure valued processes $X_{t}^{(N)}$ associated to this branching random walks:\begin{align}
&X^{(N)}_{0}=\frac{1}{N}\sum_{i=0}^{M_N}\delta_{{x_{i}}/{N^{\frac{1}{2}}}},\notag
\intertext{and }
&X_{t}^{(N)}=\frac{1}{N}\sum_{i=1}^{B^{(N)}_{tN}}\delta_{{x_{i}(t)}/{N^{\frac{1}{2}}}},\ \ \text{for $t=\frac{1}{N},\cdots, \frac{\lfloor KN\rfloor}{N}$ for each $K>0$,}\notag
\end{align}
 where $x_{i}(t)$ is the position of the $i$-th particle at $tN$-th generation.
We remark that if we identify $B_{tN,x}^{(N)}$ as the measure $B^{(N)}_{tN,x}\delta_{x}$, then $X_{t}^{(N)}$ is represented as 
\begin{align}
X_{t}^{(N)}=\frac{1}{N}\sum_{x\in \mathbb{Z}}B^{(N)}_{tN,x}\delta_{x/N^{\frac{1}{2}}} \ \ \ \text{for $t=\frac{1}{N},\cdots,\frac{\lfloor KN\rfloor}{N}$.}\notag
\end{align}
Let  ${\cal M}_{F}(\mathbb{R})$ be the set of the finite Borel measures on $\mathbb{R}$. 
For convenience, we extend this model to the c{\'a}dl{\'a}g paths in ${\cal M}_{F}(\mathbb{R})$ by
\begin{align}
X_{t}^{(N)}=\frac{1}{N}\sum_{x\in \mathbb{Z}}B^{(N)}_{\underline{t}N,x}\delta_{x/N^{\frac{1}{2}}},\ \ \text{for }\underline{t}\leq t<\underline{t}+\frac{1}{N},\notag
\end{align}
where we define $\underline{t}$ for $t$ and $N$ by some positive number $\frac{i}{N}$ for $i\in \mathbb{N}$ satisfying $\frac{i}{N}\leq t< \frac{i+1}{N}$. 			Then,  $X_{t}^{(N)}\in {\cal M}_{F}(\mathbb{R})$ for each $t\in[0,K]$. Let $\phi\in {\cal B}_{b}(\mathbb{R})$, where ${\cal B}_{b}(\mathbb{R})$ is the set of the bounded Borel measurable functions on $\mathbb{R}$. We denote the product of $\nu\in{\cal M}_{F}(\mathbb{R})$ and $\phi\in {\cal B}_{b}(\mathbb{R})$ by $\nu(\phi)$, that is  \begin{align}
\nu(\phi)=\int_{\mathbb{R}}\phi(x)\nu(dx).\notag
\end{align}


To describe the main theorem, we give the following assumption on the environment:

{\bf Assumption A} \begin{align*}
&E[m^{(N,1)}_{0,0}]=E\left[\sum_{i=0}^{\infty}k\,q_{n,x}^{(N)}\right]=1,\lim_{N\to \infty}E\left[m^{(N,2)}_{0,0}-1\right]=\gamma>0,\\
&\sup_{N\geq 1}E\left[m^{(N,4)}_{0,0}\right]<\infty, \lim_{N\to \infty }N^{\frac{1}{2}}E\left[(m_{0,0}^{(N,1)}-1)^2\right]=\beta^2,\\
&\sup_{N\geq 1}N^{\frac{1}{2}}E\left[(m^{(N,1)}_{0,0}-1)^4\right]<\infty.
\end{align*}

{\bf Example:} The simplest example satisfying Assumption A is the case where $\dis q_{n,x}^{(N)}(0)=\frac{1}{2}-\frac{\beta \xi(n,x)}{2N^{\frac{1}{4}}}, q_{n,x}^{(N)}(2)=\frac{1}{2}+\frac{\beta \xi(n,x)}{2N^{\frac{1}{4}}}$ for i.i.d.\,random variables $\{\xi(n,x):(n,x)\in \mathbb{N}\times \mathbb{Z}\}$ such that $P(\xi(n,x)=1)=P(\xi(n,x)=-1)=\frac{1}{2}$.

Before giving our main theorem, we introduce a set of functions on $\mathbb{R}$, rapidly decreasing continuous functions:
\begin{align*}
C_{rap}(\mathbb{R})=\left\{	g\in C_b(\mathbb{R}): |g|_p= \sup_x e^{p|x|}g(x)<\infty,\ \text{for all }p>0			\right\}.
\end{align*}

\begin{thm}\label{thm1}
We suppose that $X_{0}^{(N)}(\cdot)\Rightarrow X_0(\cdot)$ in ${\cal M}_F(\mathbb{R})$ and Assumption A.
Then, the sequence of measure valued processes $\left\{X_{\cdot}^{(N)}:N\in\mathbb{N}\right\}$ converges to  a continuous measure valued process $X_{\cdot}\in  C([0,\infty),{\cal M}_{F}(\mathbb{R}))$. Moreover, for any $t>0$, any limit point $X_t(dx)$ is almost surely absolutely continuous with respect to Lebesgue measure and its density $u(t,x)$ is a solution of the  following martingale problem: \begin{align}
\begin{cases}
\text{For all }\phi\in {\cal D}(\Delta),\\[.5em]
{\dis  \qquad Z_{t}(\phi)=\int_{\mathbb{R}}\phi(x)u(t,x)dx-\int_{\mathbb{R}}\phi(x)X_0(dx)-\frac{1}{2}\int_{0}^{t}\int_{\mathbb{R}}\Delta \phi(x)u(t,x)dxds}\\[.5em]
\text{is an ${\cal F}_{t}^{X}$-continuous square-integrable martingale and }\\
{\dis \qquad \qquad \qquad \langle Z(\phi)\rangle_{t}=\int_{0}^{t}\int_{\mathbb{R}}\phi^{2}(x)\left(\gamma u(s,x)+{2\beta^2}u(s,x)^2\right)dxds}.\\
\end{cases}\label{mart1}
\end{align}
In particular, if $X_0$ has a density $u\in C_{rap}^+(\mathbb{R})$, then the solutions to (\ref{mart1}) is the unique.
\end{thm}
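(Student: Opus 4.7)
The plan is to establish the theorem in three stages: (i) tightness of $\{X^{(N)}_\cdot\}$ in $D([0,K],{\cal M}_F(\mathbb{R}))$, (ii) identification of any subsequential limit as a solution of (\ref{mart1}), and (iii) weak uniqueness under the $C_{rap}^+$ hypothesis on $X_0$. The whole analysis rests on the annealed semimartingale decomposition $X^{(N)}_t(\phi)=X^{(N)}_0(\phi)+D^{(N)}_t(\phi)+M^{(N)}_t(\phi)$ obtained by conditioning on the $\sigma$-field $\mathcal{F}^{(N)}_n$ generated by the branching system and the environment up to generation $n$. Since the offspring laws are i.i.d.\ in $(n,x)$ and $E[m^{(N,1)}_{0,0}]=1$, the one-step conditional mean and variance are explicit.

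For stage (i), uniform second- and fourth-moment estimates for $X^{(N)}_t(\phi)$---obtained by iterating the one-step variance and invoking the fourth-moment bounds in Assumption A---combined with Aldous--Rebolledo applied to $D^{(N)}_\cdot(\phi)$ and $\langle M^{(N)}(\phi)\rangle_\cdot$ yield tightness with compact containment. A discrete Taylor expansion of the nearest-neighbour kernel immediately delivers $D^{(N)}_t(\phi)\to\frac{1}{2}\int_0^t X_s(\Delta\phi)\,ds$.

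The main obstacle lies in identifying the limit of $\langle M^{(N)}(\phi)\rangle_t$. Splitting it by the law of total variance over the environment at each step produces two pieces. The \emph{branching piece} carries the factor $E[m^{(N,2)}_{0,0}-(m^{(N,1)}_{0,0})^2]\to\gamma$ and converges, as for ordinary critical BRW, to $\gamma\int_0^t X_s(\phi^2)\,ds$. The \emph{environment piece} carries the factor $E[(m^{(N,1)}_{0,0}-1)^2]\sim\beta^2 N^{-1/2}$ and, up to negligible corrections, equals
\[
\frac{\beta^2}{N^{5/2}}\sum_{k\le tN}\sum_x\bigl(B^{(N)}_{k,x}\bigr)^2\phi\bigl(x/\sqrt N\bigr)^2.
\]
Matching its limit with $2\beta^2\int_0^t\!\int u(s,x)^2\phi(x)^2\,dx\,ds$ forces us simultaneously to prove that $X_t$ is absolutely continuous with an $L^2$ density $u(t,\cdot)$ and that the rescaled self-collision sum converges to $2\int u^2\phi^2\,dx$; the factor $2$ comes from the fact that at each generation the particles occupy a single parity class of $\mathbb{Z}$. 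The route I intend to follow is to mollify $X^{(N)}$ by heat kernels $p_\ve$, prove uniform-in-$N$ fourth-moment bounds on the smoothed collision functional $\int_0^t\!\iint p_\ve(x-y)\phi(x)\phi(y)X^{(N)}_s(dx)X^{(N)}_s(dy)\,ds$, and then take a double limit $N\to\infty$, $\ve\to 0$; absolute continuity of the limit follows from the resulting finite bound $E[\int_0^t\!\int u^2\,dx\,ds]<\infty$.

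For stage (iii), recast (\ref{mart1}) as the SPDE
\[
\partial_t u=\tfrac{1}{2}\Delta u+\sqrt{\gamma u}\,\dot W_1+\sqrt{2\beta^2}\,u\,\dot W_2
\]
driven by independent space-time white noises $W_1,W_2$. Weak uniqueness is then established by adapting the self-duality of Mytnik~\cite{Myt}: one constructs a dual branching--coalescing measure-valued process (or dual SPDE) $v_t$ such that an exponential-functional identity of the form $E[\exp(-\langle u_t,\psi\rangle)]=E[\exp(-\langle X_0,v_t\rangle)]$ holds for every $\psi\in C_{rap}^+$, thereby determining the law of $u$. The assumption $X_0,\psi\in C_{rap}^+$ is precisely what keeps these exponential functionals integrable and preserves the rapid decay of the dual process over finite horizons, closing the argument.
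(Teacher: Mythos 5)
Your proposal follows essentially the same route as the paper: the same three-stage structure (tightness via a reduction to real-valued processes plus uniform moment bounds, identification of the quadratic variation by mollifying the empirical density and controlling the self-collision functional $\sum_x (B^{(N)}_{k,x})^2$ through fourth-moment estimates, and weak uniqueness via an exponential duality with a dual SPDE $\partial_t Y=\tfrac12\Delta Y-\tfrac{\gamma}{2}Y^2+\sqrt{2\beta^2}\,Y\dot W$), and even the same splitting of the martingale part into a branching piece giving $\gamma\int_0^t X_s(\phi^2)\,ds$ and an environment piece giving the $u^2$ term, with the factor $2$ correctly attributed to the parity constraint on the walk. The one caveat, which the paper itself stresses, is that the duality argument requires a priori bounds such as $\int_0^T E\left[\langle X_s^2,1\rangle\right]ds<\infty$ that are inherited from the particle approximation, so what it yields is uniqueness among limit points of $\{X^{(N)}\}$ rather than among arbitrary solutions of the martingale problem.
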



{\bf Remark:} We found from Assumption A that the fluctuation of the environment is mainly given by $(m_{n,x}^{(N,1)}-1)$ and scaling factor is $N^{-\frac{1}{4}}$. (It appears clearly in the Example beyond Assumption A.) This scaling factor is different from $N^{-\frac{1}{2}}$, the one in \cite{Myt}. When the scaling factor is $N^{-\frac{1}{2}}$, the limit is the usual super-Brownian motion (\ref{SBM}).

We roughly discuss how the scaling factor in our model is determined. For simplicity, we consider the model for the case where the environment is the one given in Example. 

We scale the space by $N^{-\frac{1}{2}}$. Then, the summation of the fluctuation of the first moment of offsprings in the segment $\{k\}\times [x,y]$ is $\dis \sum_{z\in [xN^{{1}/{2}},yN^{{1}/{2}}]}\frac{\beta \xi(k,z)}{N^{\frac{1}{4}}}			$. Since it is the summation of i.i.d.\,random variables of $\frac{(y-x)N^{\frac{1}{2}}}{2}$, the central limit theorem holds and it weakly converges to a Gaussian random variable with distribution $N(0,\frac{\beta^2(y-x)}{2})$.  Similar argument holds for random variables other than Bernoulli random variables.

{\bf Remark:} 
The martingale problem (\ref{mart1}) is the rigorous and general definition of the martingale problem when $g(x,y)$ is replaced by $\delta_{x-y}$ in (\ref{mytsbm}). 
Also, the theorem implies the existence of the solution to the stochastic heat equation\begin{align}
\frac{\partial }{\partial t}u=\frac{1}{2}\Delta u+\sqrt{\gamma u+2{\beta^2}u^{2}}\dot{W},\label{spde}
\end{align}
and $\lim_{t\to+0}u(t,x)dx=X_0(dx)$, 
where $\dot{W}$ is time-space white noise. 
In \cite{MP}, the existence of solutions for general SPDE containing (\ref{spde}) when the initial measure $X_0(dx)$ has  a continuous density with rapidly decreasing at infinity. 

Also, Theorem \ref{thm1} states that  the uniqueness in law of solutions to (\ref{spde}). 
There are a lot of papers on uniqueness of the stochastic heat equation $\frac{\partial }{\partial t}u=\frac{1}{2}\Delta u+|u|^{\gamma}\dot{W}$. It is known that weak uniqueness holds for $\frac{1}{2}\leq \gamma\leq 1$ in \cite{Myt2} and pathwise  uniqueness holds for $\frac{3}{4}<\gamma\leq 1$ in \cite{MytPer}. However, pathwise uniqueness fails when solutions are allowed to take negative values for $\gamma <\frac{3}{4}$ in \cite{MMP}. 

Especially, we should remark that in the proof of Theorem \ref{thm1}, we will show that the weak uniqueness for the limit points of $\{X^{(N)}\}$ but it will not imply the weak uniqueness for the solutions to the martingale problem (\ref{mart1}). It is because we will use some estimate arising from $\{X^{(N)}\}$.

\section{Proof of Theorem \ref{thm1}}\label{3}
In this section, we will give a proof of Theorem \ref{thm1}. The proof is divided into three steps:\begin{enumerate}[i)]
\item Tightness.
\item Identification of the limit point process.
\item Weak uniqueness of the limit points.
\end{enumerate}

In this section, we consider the following setting for simplicity. \vspace{.5em}

{\bf Assumption B:}  The number of initial particles is $N$ and all of them locates at the origin at time $0$. Also, $\dis q_{n,x}^{(N)}(0)=\frac{1}{2}-\frac{\beta \xi(n,x)}{2N^{\frac{1}{4}}}, q_{n,x}^{(N)}(2)=\frac{1}{2}+\frac{\beta \xi(n,x)}{2N^{\frac{1}{4}}}$ for i.i.d.\,random variables $\{\xi(n,x):(n,x)\in \mathbb{N}\times \mathbb{Z}\}$ such that $P(\xi(n,x)=1)=P(\xi(n,x)=-1)=\frac{1}{2}$.\vspace{.5em}

To consider the general model, it is almost enough to replace $\frac{\beta \xi(n,x)}{N^{\frac{1}{4}}}$ by $m_{n,x}^{(N,1)}-1$. We sometimes need to consider $\{\{q^{(N)}_{n,x}{(k)}\}_{k\geq 0}:(n,x)\in\mathbb{N}\times \mathbb{Z}\}$. Especially, $\gamma$ appears in the same situation as the construction of the usual super-Brownian motion, so the reader will not to have any difficulties.

\vspace{1em}
Before staring the proof, we will look at the $X_{t}^{(N)}(\phi)$.
Since $X^{(N)}_{t}$ are constant in $t\in[\underline{t},\underline{t}+\frac{1}{N})$, it is enough to see the difference between $X_{\underline{t}}^{(N)}$ and $X_{\underline{t}+\frac{1}{N}}^{(N)}$; \begin{align}
X_{\underline{t}+\frac{1}{N}}^{(N)}\left(\phi \right)-X_{\underline{t}}^{(N)}(\phi)&=\frac{1}{N}\sum_{\x\sim \underline{t}}\left(\phi\left(	\frac{Y^{\x}_{\underline{t}N+1}}{N^{\frac{1}{2}}}			\right)V^{\x}-	\phi\left(	\frac{Y^{\x}_{\underline{t}N}}{N^{\frac{1}{2}}}		\right)	\right),	\notag
\end{align}
where $\x\sim \underline{t}$ means that the particle $\x$ is the $\underline{t}N$-th generation, $Y_{\underline{t}N}^{\x}$ is the position of the particle $\x$ at time $\underline{t}N$ for $\x\sim \underline{t}$, $V^{\x}$ is the number of children of $\x$ and for simplicity, we omit $N$. We remark that $Y^{\x}_{\underline{t}N+1}=Y^{\y}_{\underline{t}N+1}$ for $\y$ which is  a child of $\x$.

Also, we divide this summation into four parts:
\begin{align}
&(LHS)\notag\\
&=\frac{1}{N}\sum_{\x\sim\underline{t}}\phi\left(\frac{Y_{\underline{t}N+1}^{\x}}{N^{\frac{1}{2}}}\right)\left(V^{\x}-1-\frac{\beta\xi(\underline{t}N,Y_{\underline{t}N}^{\x})}{N^{\frac{1}{4}}}\right)\notag\\
&\ \ +\frac{1}{N}\sum_{\x\sim \underline{t}}\phi\left(\frac{Y_{\underline{t}N+1}^{\x}}{N^{\frac{1}{2}}}\right)\frac{\beta\xi\left(\underline{t}N,Y_{\underline{t}N}^{\x}\right)}{N^{\frac{1}{4}}}\notag\\
&\ \ +\frac{1}{N}\sum_{\x\sim \underline{t}}				\left(	\phi\left(	\frac{Y_{\underline{t}N+1}^{\x}}{N^{\frac{1}{2}}}			\right)-\phi\left(		\frac{Y^{\x}_{\underline{t}N}	}{N^{\frac{1}{2}}}\right)	-\frac{\phi\left(		\frac{Y^{\x}_{\underline{t}N}+1	}{N^{\frac{1}{2}}}\right)+\phi\left(		\tfrac{Y^{\x}_{\underline{t}N}	-1}{N^{\frac{1}{2}}}\right)-2\phi\left(		\tfrac{Y^{\x}_{\underline{t}N}	}{N^{\frac{1}{2}}}\right)}{2}
\right)
\notag\\
& \ \ +\frac{1}{N}\sum_{\x\sim \underline{t}}\frac{\phi\left(		\frac{Y^{\x}_{\underline{t}N}+1	}{N^{\frac{1}{2}}}\right)+\phi\left(		\frac{Y^{\x}_{\underline{t}N}-1	}{N^{\frac{1}{2}}}\right)-2\phi\left(		\frac{Y^{\x}_{\underline{t}N}	}{N^{\frac{1}{2}}}\right)}{2}\notag\\
&=\Delta M_{\underline{t}}^{(b,N)}(\phi)+\Delta M_{\underline{t}}^{(e,N)}(\phi)+\Delta M_{\underline{t}}^{(s,N)}(\phi)\notag\\
&\hspace{5em}+\frac{1}{N}\sum_{\x\sim \underline{t}}\frac{\phi\left(		\frac{Y^{\x}_{\underline{t}N}+1	}{N^{\frac{1}{2}}}\right)+\phi\left(		\frac{Y^{\x}_{\underline{t}N}-1	}{N^{\frac{1}{2}}}\right)-2\phi\left(		\frac{Y^{\x}_{\underline{t}N}	}{N^{\frac{1}{2}}}\right)}{2}.\notag
\end{align}


Thus, we have that for $0\leq\underline{t} \leq t< \underline{t}+\frac{1}{N}$\begin{align}
X_{t}^{(N)}(\phi)-X_{0}^{(N)}(\phi)=\left(M_{{t}}^{(b,N)}(\phi)+M_{{t}}^{(e,N)}(\phi)+M_{{t}}^{(s,N)}(\phi)\right)+\int_{0}^{\underline{t}}X_{s}^{(N)}\left(A^{N}\phi\right)ds,\label{martiN}
\end{align}
where \begin{align}
&M_{{t}}^{(b,N)}(\phi)=\frac{1}{N}\sum_{\underline{s}<t}\sum_{\x\sim\underline{s}}\phi\left(\frac{Y_{\underline{s}N+1}^{\x}}{N^{\frac{1}{2}}}\right)\left(V^{\x}-1-\frac{\beta\xi(\underline{s}N,Y_{\underline{s}N}^{\x})}{N^{\frac{1}{4}}}\right),\notag\\
&M_{{t}}^{(e,N)}(\phi)=\frac{1}{N}\sum_{\underline{s}<t}\sum_{\x\sim \underline{s}}\phi\left(\frac{Y_{\underline{s}N+1}^{\x}}{N^{\frac{1}{2}}}\right)\frac{\beta\xi\left(\underline{s}N,Y_{\underline{s}N}^{\x}\right)}{N^{\frac{1}{4}}},\notag\\
&M_{{t}}^{(s,N)}(\phi)=\frac{1}{N}\sum_{\underline{s}<t}\sum_{\x\sim \underline{s}}				\left(	\phi\left(	\frac{Y_{\underline{s}N+1}^{\x}}{N^{\frac{1}{2}}}			\right)-\phi\left(		\frac{Y^{\x}_{\underline{s}N}	}{N^{\frac{1}{2}}}\right)	-\frac{\phi\left(		\frac{Y^{\x}_{\underline{s}N}+1	}{N^{\frac{1}{2}}}\right)+\phi\left(		\frac{Y^{\x}_{\underline{s}N}-1	}{N^{\frac{1}{2}}}\right)-2\phi\left(		\frac{Y^{\x}_{\underline{s}N}	}{N^{\frac{1}{2}}}\right)}{2}
\right),\notag
\end{align}
and $A^{N}: {\cal B}_{b}(\mathbb{R})\to {\cal B}_{b}(\mathbb{R})$ is the following operator;\begin{align}
A^{N}\phi(x)=\frac{\phi\left(x+\frac{1}{N^{\frac{1}{2}}}\right)+\phi\left(x-\frac{1}{N^{\frac{1}{2}}}\right)-2\phi(x)}{\frac{2}{N}}.\notag
\end{align}
Actually, we have that \begin{align}
\int_{0}^{\underline{t}}X_{s}^{(N)}\left(A^{N}\phi\right)ds=\sum_{\underline{s}<t}\sum_{\x\sim \underline{s}}
\frac{1}{N}A^{N}\phi\left(	\frac{	Y_{\underline{s}N}^{\x}}{N^{\frac{1}{2}}}	\right).\notag
\end{align}

Also, we remark that $M_{\underline{t}}^{(b,N)}(\phi)$, $M_{\underline{t}}^{(e,N)}(\phi)$, and $M_{\underline{t}}^{(s,N)}(\phi)$ are ${\cal F}^{(N)}_{\underline{t}N}$-martingales, where ${\cal F}^{(N)}_{n}$ is the $\sigma$-algebra \begin{align}
\sigma \left(	V^{\x},Y^{\x}_{k+1},\xi(k,x):	|\x|\leq n-1,	k\leq n-1,x\in \mathbb{Z}	\right)\notag,
\end{align}
where ${\cal F}_{0}^{(N)}=\{\emptyset, \Omega\}$.
Indeed, since $Y^{\x}_{n+1}$ are independent of  $V^{\x}$ and $\xi(n,x)$,
\begin{align}
&E\left[\left.M^{(b,N)}_{\underline{t}}(\phi)-M^{(b,N)}_{\underline{t}-\frac{1}{N}}(\phi)\right|	{\cal F}_{\underline{t}N-1}^{(N)}	\right]\notag\\
&\hspace{3.5em}=\frac{1}{N}\sum_{\x\sim\underline{t}-\frac{1}{N}}E\left[\left.	\phi\left(		\frac{Y^{\x}_{\underline{t}N}}{N^{\frac{1}{2}}}	\right)						\right|{\cal F}_{\underline{t}N-1}^{(N)}\right]E\left[	\left.V^{\x}-1-\frac{\beta\xi\left(		\underline{t}N-1,Y^{\x}_{\underline{t}N-1}		\right)}{N^{\frac{1}{4}}}\right|{\cal F}_{\underline{t}N-1}^{(N)}			\right]\notag\\
&\hspace{3.5em}=0,\notag\\
&E\left[	\left.			M^{(e,N)}_{\underline{t}}(\phi)-M^{(e,N)}_{\underline{t}-\frac{1}{N}}(\phi)	\right|	{\cal F}_{\underline{t}N-1}^{(N)}			\right]\notag\\
&\hspace{3.5em}=\frac{1}{N}\sum_{\x\sim \underline{t}-\frac{1}{N}}E\left[\left.			\phi\left(	\frac{Y^{\x}_{\underline{t}N}}{N^{\frac{1}{2}}}		\right)	\right|{\cal F}_{\underline{t}N-1}^{(N)}\right]E\left[\left.		\frac{\beta\xi\left(\underline{t}N-1,Y^{\x}_{\underline{t}N-1}\right)}{N^{\frac{1}{4}}}		\right|{\cal F}_{\underline{t}N-1}^{(N)}\right]\notag\\
&\hspace{3.5em}=0,\notag\\
\intertext{and}
&E\left[\left.	M^{(s,N)}_{\underline{t}}(\phi)	-M^{(s,N)}_{\underline{t}-\frac{1}{N}}(\phi)	\right|{\cal F}_{\underline{t}N-1}^{(N)}\right]=0\notag,
\end{align}
almost surely.

Moreover, the decomposition (\ref{martiN}) is very useful since the martingales $M_{\underline{t}}^{(i,N)}(\phi)$ $i=b,e,s$ are orthogonal to each others. Indeed, we have that
\begin{align*}
&E\left[\left.\left(\Delta M_{\underline{t}}^{(b,N)}(\phi)\right)\left(\Delta M_{\underline{t}}^{(e,N)}(\phi)\right)\right|		{\cal F}_{\underline{t}N-1}^{(N)}	\right]\\
&\quad =\frac{1}{N^2}\sum_{\x,\x'\sim \underline{t}-\frac{1}{N}}\left(E\left[\left.\phi\left(		\frac{Y^{\x}_{\underline{t}N}}{N^{\frac{1}{2}}}		\right)\phi\left(		\frac{Y^{\x'}_{\underline{t}N}}{N^{\frac{1}{2}}}		\right)\right| {\cal F}_{\underline{t}N-1}^{(N)}\right]	\right.\\
&\quad \times	\left.E\left[	E\left[	\left.\left.	\left(V^{\x}-1-\frac{\beta\xi(\underline{t}N-1,Y^{\x}_{\underline{t}N-1})}{N^{\frac{1}{4}}}\right)\right|{\cal G}_{\underline{t}N-1}^{(N)}\right]		\frac{\beta\xi(\underline{t}N-1,Y^{\x}_{\underline{t}N-1})}{N^{\frac{1}{4}}}\right|{\cal F}_{\underline{t}N-1}^{(N)}\right]\right)\\
&\quad =0,
\end{align*}
where $	{\cal G}^{(N)}_{n}={\cal F}_{n}^{(N)}\vee \sigma (\xi(n,x):x\in\mathbb{Z})	$ almost surely. 
Also, we can obtain by the similar argument that $E\left[\left.\left(\Delta M_{\underline{t}}^{(b,N)}(\phi)\right)\left(\Delta M_{\underline{t}}^{(s,N)}(\phi)\right)\right|		{\cal F}_{\underline{t}N-1}^{(N)}	\right]=E\left[\left.\left(\Delta M_{\underline{t}}^{(s,N)}(\phi)\right)\left(\Delta M_{\underline{t}}^{(e,N)}(\phi)\right)\right|		{\cal F}_{\underline{t}N-1}^{(N)}	\right]=0$ almost surely.


\subsection{Tightness}\label{tight}
In this subsection, we will prove the following lemma.
\begin{lem}\label{lem0}
The sequence $\{X^{(N)}\}$ is tight in $D([0,\infty),{\cal M}_{F}(\mathbb{R}))$, and each limit process is continuous.
\end{lem}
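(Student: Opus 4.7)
The plan is to apply Jakubowski's tightness criterion in $D([0,\infty),{\cal M}_F(\mathbb{R}))$, which reduces the lemma to two conditions: (i) compact containment, namely for each $T,\varepsilon>0$ a compact $\mathbb{K}\subset{\cal M}_F(\mathbb{R})$ with $\sup_N P(\exists t\le T:X_t^{(N)}\notin \mathbb{K})\le\varepsilon$; and (ii) tightness of $\{X^{(N)}_\cdot(\phi)\}_N$ in $D([0,\infty),\mathbb{R})$ for every $\phi$ in a measure-determining family, for which $C_c^\infty(\mathbb{R})\subset{\cal D}(\Delta)$ suffices. The semimartingale decomposition (\ref{martiN}) writes $X_t^{(N)}(\phi)-X_0^{(N)}(\phi)$ as a drift $\int_0^{\underline t}X_s^{(N)}(A^N\phi)\,ds$ plus the three orthogonal martingales $M^{(b,N)},M^{(e,N)},M^{(s,N)}$, so (ii) reduces to an Aldous-type estimate for each summand separately.

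A uniform mass bound $E[\sup_{t\le T}X^{(N)}_t(1)^2]\le C$ comes for free from Doob applied to the nonnegative mean-preserving martingale $X^{(N)}_\cdot(1)$ (critical branching gives $E[V^{\x}]=1$, the environmental bias averaging out). Since $\|A^N\phi\|_\infty\le\tfrac12\|\phi''\|_\infty$ by Taylor, the drift increment over $[\tau_N,\tau_N+h_N]$ is $O(h_N\|\phi''\|_\infty)$ in probability, handling the drift piece of Aldous's criterion. For the martingales I compute the predictable brackets in the filtration $\{{\cal F}^{(N)}_n\}$, conditioning first on the environment via ${\cal G}^{(N)}_n$ so that the $V^{\x}$ become conditionally independent with conditional variance $1-\beta^2 N^{-1/2}$; this yields
\[
\langle M^{(b,N)}(\phi)\rangle_t\le\|\phi\|_\infty^2\int_0^tX^{(N)}_s(1)\,ds,
\]
and a direct expansion of the bracketed quantity inside $M^{(s,N)}$ (which equals $\pm\tfrac12[\phi((Y^{\x}_{\underline sN}+1)/\sqrt N)-\phi((Y^{\x}_{\underline sN}-1)/\sqrt N)]$) gives $\langle M^{(s,N)}(\phi)\rangle_t\le CN^{-1}\|\phi'\|_\infty^2\int_0^tX^{(N)}_s(1)\,ds$. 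Aldous's criterion for both then follows by integrating these bounds over $[\tau_N,\tau_N+h_N]$ and applying Doob.

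The main obstacle is the environment martingale. Using the independence of $\{\xi(\underline sN,x)\}_{x\in\mathbb{Z}}$ conditional on ${\cal F}^{(N)}_{\underline sN}$ one obtains
\[
\langle M^{(e,N)}(\phi)\rangle_t\le\frac{\beta^2\|\phi\|_\infty^2}{N^{5/2}}\sum_{\underline s<t}\sum_{x\in\mathbb{Z}}\bigl(B^{(N)}_{\underline sN,x}\bigr)^2,
\]
so tightness of $M^{(e,N)}$ hinges on the a priori estimate $E\bigl[\sum_x(B^{(N)}_{sN,x})^2\bigr]=O(N^{3/2})$ uniformly for $s\le T$. This is morally the statement that the rescaled population density has bounded mean $L^2$-norm in space, the SPDE-level analogue of $\int u(s,x)^2\,dx<\infty$. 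I would prove it by a two-particle moment calculation: expand $E[(B^{(N)}_{sN,x})^2]$ as the expected number of ordered pairs of lineages co-located at $x$ at time $sN$, decompose over the generation of their most recent common ancestor $|\x\wedge\y|$, use the fact that after the split the two lineages evolve as independent simple random walks, and run a Gronwall argument using the per-generation factorial-second-moment bound $E[V^{\x}(V^{\x}-1)]=1+O(N^{-1/2})$ together with the local CLT to get the right $N^{3/2}$ scaling.

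Finally, for compact containment I invoke the characterization of relative compactness in ${\cal M}_F(\mathbb{R})$: $\sup_{\nu\in\mathbb{K}}\nu(1)<\infty$ and $\sup_{\nu\in\mathbb{K}}\nu(\{|x|>M\})\to 0$ as $M\to\infty$. The first is the mass martingale bound above; the second follows from $E[X^{(N)}_t(\{|x|>M\})]\le E[X^{(N)}_0(1)]\sup_{s\le T}P(|S_{sN}|>M\sqrt N)\to 0$ as $M\to\infty$, uniformly in $N$, by Brownian exit estimates for the rescaled simple random walk. To show every limit process is continuous, I bound the single-step jumps: Assumption A provides uniform boundedness of $E[(V^{\x})^4]$, from which a direct fourth-moment computation (using the conditional independence of the $V^{\x}$ across $\x$ given the environment) yields $E[|X^{(N)}_{\underline t+1/N}(\phi)-X^{(N)}_{\underline t}(\phi)|^4]=O(N^{-2})$, and a union bound over the at most $TN$ jumps gives $E[\sup_{t\le T}|X^{(N)}_t(\phi)-X^{(N)}_{t-}(\phi)|^4]=O(N^{-1})\to 0$, so any limit process lies in $C([0,\infty),{\cal M}_F(\mathbb{R}))$.
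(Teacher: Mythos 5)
Your architecture matches the paper's: the same reduction of tightness in $D([0,\infty),{\cal M}_F(\mathbb{R}))$ to compact containment plus real-valued tightness of $X^{(N)}(\phi)$, the same decomposition into drift plus the three orthogonal martingales, the correct identification of $\langle M^{(e,N)}(\phi)\rangle$ as the crux, and the right tool for it (a two-lineage collision computation with a $1+O(N^{-1/2})$ factor per collision and the local CLT, which is exactly the paper's Lemma \ref{lem13}/Corollary \ref{cor1}). Using Aldous in place of the Perkins bracket-compactness lemma is a legitimate variant. However, there are two genuine gaps.

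First, the claim that $E[\sup_{t\le T}X^{(N)}_t(1)^2]\le C$ ``comes for free from Doob \ldots the environmental bias averaging out'' is wrong. Doob only reduces this to bounding $E[X^{(N)}_T(1)^2]$, and a martingale's second moment is not controlled by its initial value. Criticality kills the first moment of the bias but not the second: distinct lineages are positively correlated through the shared environment, and one computes $E[B^{(i)}_{n}B^{(j)}_{n}]=E_{Y^1Y^2}\bigl[(1+\beta^2N^{-1/2})^{\sharp\{k\le n:\,Y^1_k=Y^2_k\}}\bigr]$, which is bounded only because two independent one-dimensional walks meet $O(\sqrt{n})$ times by time $n\sim KN$. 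In other words, the ``free'' step secretly requires the very collision-local-time estimate you invoke later for $\sum_x(B^{(N)}_{n,x})^2$; this is the central quantitative point of the whole construction (it is why the scaling is $N^{-1/4}$ and why the result is one-dimensional), so it cannot be waved away. Relatedly, your Aldous bound for $M^{(e,N)}$ at a stopping time $\tau_N$ needs uniform-in-time control of $\int_{\tau_N}^{\tau_N+h}N^{-3/2}\sum_x(B^{(N)}_{\lfloor uN\rfloor,x})^2\,du$, and the fixed-time first moment $E[\sum_x B^2]=O(N^{3/2})$ does not give this; the paper uses fourth moments of $B_{n,x}$ and a covering of $[0,K]$ by intervals $I^\delta_k$ in Lemma \ref{lem12} to get the supremum over starting times.

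Second, compact containment: your bound $E[X^{(N)}_t(\{|x|>M\})]\le\sup_{s\le T}P(|S_{sN}|>M\sqrt N)$ is a fixed-$t$ first-moment estimate and does not control $\sup_{t\le T}X^{(N)}_t(K^c)$; a union bound over the $\sim TN$ time steps costs a factor $N$. The paper's fix is to pass to the historical process and observe that $R^{(N)}_t$, the rescaled number of particles whose ancestral path has \emph{ever} left $K_T$ by time $t$, dominates $X^{(N)}_t(K_T^c)$ and is a submartingale, so Doob's $L^1$ maximal inequality applies. Some device of this kind (a path-level maximal inequality) is needed; as written your argument does not deliver the required uniformity in $t$.
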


To prove it,  we will use the following theorem which reduces the problem to the tightness of real-valued process \cite[Theorem II.\,4.\,1]{Per}.

\begin{thm}\label{thm2}
Assume that $E$ is a Polish space. Let $D_{0}$ be a separating class of $C_{b}(E)$ containing $1$. A sequence of c{\'a}dl{\'a}g ${\cal M}_{F}(E)$-valued processes $\left\{X^{(N)}:N\in \mathbb{N}\right\}$ is $C$-relatively compact in $D\left([0,\infty), {\cal M}_{F}(E)\right)$ if and only if \begin{enumerate}[(i)]
\item for every $\varepsilon,T>0$, there is a compact set $K_{T,\varepsilon}$ in $E$ such that \begin{align}
\sup_{N}P\left(	\sup_{t\leq T}X_{t}^{(N)}\left(K_{T,\varepsilon}^{c}\right)>\varepsilon		\right)<\varepsilon, \notag
\end{align}
\item and for all $\phi\in D_{0}$, $\left\{X^{(N)}(\phi):N\in \mathbb{N}\right\}$ is $C$-relatively compact in $D\left([0,\infty),\mathbb{R}\right)$.
\end{enumerate}
\end{thm}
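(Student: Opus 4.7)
The plan is to handle the equivalence in two directions: necessity is a short projection argument, while sufficiency rests on Jakubowski's criterion for tightness in $D([0,\infty),S)$ for a Polish space $S$, specialised to $S={\cal M}_F(E)$ with the weak topology. After tightness I still need to rule out jumps in every limit point, which I expect to be the main technical obstacle.

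For necessity, I would observe that each $\phi\in D_0\subset C_b(E)$ induces a continuous evaluation $\nu\mapsto\nu(\phi)$ on ${\cal M}_F(E)$, hence a continuous lift $D([0,\infty),{\cal M}_F(E))\to D([0,\infty),\mathbb{R})$ which preserves $C$-relative compactness; this gives (ii) at once. For (i), I would use the standard fact that any $C$-tight sequence in $D([0,\infty),S)$ with $S$ Polish satisfies compact containment at the $S$-level, then translate ``compact subset of ${\cal M}_F(E)$'' via Prokhorov's theorem into the existence of a compact $K_{T,\varepsilon}\subset E$ on which the measures concentrate uniformly in $N$ and $t\leq T$.

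For sufficiency, I would verify the two hypotheses of Jakubowski's theorem. First, measure-level compact containment: using that a subset of ${\cal M}_F(E)$ is weakly relatively compact iff it has uniformly bounded total mass and is uniformly tight, I would combine (ii) applied to $\phi\equiv 1\in D_0$ (giving a uniform bound on $X^{(N)}_t(1)$ up to any fixed time $T$) with (i) (giving uniform concentration on a compact subset of $E$) to produce, for any $\eta>0$, a compact $\hat{K}\subset {\cal M}_F(E)$ with $\inf_N P\bigl(X^{(N)}_t\in \hat{K}\text{ for all }t\leq T\bigr)\geq 1-\eta$. Second, the separating family of continuous functionals: I would take $\mathcal{F}=\{\nu\mapsto \nu(\phi):\phi\in D_0\}$, which separates points in ${\cal M}_F(E)$ by hypothesis, and note that condition (ii) delivers exactly the required tightness in $D([0,\infty),\mathbb{R})$ of each image process.

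To promote tightness to $C$-relative compactness, I would argue that every subsequential weak limit $X$ is in $C([0,\infty),{\cal M}_F(E))$. Each coordinate $X(\phi)$ with $\phi\in D_0$ is a weak limit of $C$-tight real-valued processes by (ii), so it has continuous sample paths a.s.; hence the jump measures of $X$ satisfy $(\Delta X_t)(\phi)=0$ simultaneously for all $\phi\in D_0$, and by the separating property this forces $\Delta X_t=0$ and thus continuity of $X$. The hard part will be making this last implication rigorous with respect to the Skorokhod topology on ${\cal M}_F(E)$-valued paths, where coordinate-wise continuity in an uncountable family is delicate. I would handle it by passing to a countable separating subfamily of $D_0$ (available since ${\cal M}_F(E)$ is separable under the weak topology) and exploiting the compact containment to argue that joint continuity of countably many coordinates, combined with path tightness inside a compact subset of ${\cal M}_F(E)$, is enough to conclude $X\in C([0,\infty),{\cal M}_F(E))$.
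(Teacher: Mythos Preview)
The paper does not prove this statement at all: Theorem~\ref{thm2} is quoted from Perkins' lecture notes (cited there as Theorem~II.4.1) and used as a black box to reduce tightness of the measure-valued processes to tightness of real-valued projections. So there is no ``paper's own proof'' to compare against.

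That said, your sketch is the right one and is essentially how the result is proved in Perkins (and in Jakubowski's original paper). Necessity via continuous projection and Prokhorov is straightforward. For sufficiency, Jakubowski's criterion is exactly the tool: compact containment in ${\cal M}_F(E)$ comes from combining the total-mass bound (from $1\in D_0$) with the spatial tightness in (i), and the separating family is $\{\nu\mapsto\nu(\phi):\phi\in D_0\}$. Your identification of the delicate step---passing from a.s.\ continuity of each $X(\phi)$ to a.s.\ continuity of $X$ itself---is accurate, and your fix (pass to a countable separating subfamily, use that $\Delta X_t(\phi)=\Delta(X(\phi))_t$ by continuity of $\nu\mapsto\nu(\phi)$, conclude $\Delta X_t=0$ for all $t$ outside a single null set) is the standard one. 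One small point worth making explicit: the map $\omega\mapsto\omega(\phi)$ from $D([0,\infty),{\cal M}_F(E))$ to $D([0,\infty),\mathbb{R})$ is continuous because $\nu\mapsto\nu(\phi)$ is continuous on ${\cal M}_F(E)$, so $X^{(N)}(\phi)\Rightarrow X(\phi)$ really does follow from $X^{(N)}\Rightarrow X$, which is what lets you transfer $C$-tightness of the coordinates to continuity of the limit coordinates.
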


{\bf Assumption:} 
We choose $C_{b}^2(\mathbb{R})$ as $D_{0}$, where  $C_{b}^2(\mathbb{R})$ is the set of bounded continuous function on $\mathbb{R}$ with bounded derivatives of order $1$ and $2$.

\vspace{1em}
Hereafter, we will check the conditions (i) and (ii) of Theorem \ref{thm2} for our case. In the beginning, we give the proof of (ii) by using  the following lemmas:
\begin{lem}\label{lem1} 
For $\phi\in C_b^2(\mathbb{R})$, $\displaystyle \sup_{t\leq K}\left| M_{t}^{(s,N)}(\phi)\right|\stackrel{L^{2}}{\to}0$ as $N\to \infty$ for all $K>0$.
\end{lem}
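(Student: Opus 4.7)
The plan is to exploit that $M^{(s,N)}_t(\phi)$ is a mean-zero square-integrable martingale and to show its predictable quadratic variation is of order $1/N$, after which Doob's $L^2$ inequality closes the loop.

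First I would simplify the per-particle summand of the one-step increment: the $-\phi(Y^{\x}_{\underline{s}N}/N^{1/2})$ term cancels with the $+\phi(Y^{\x}_{\underline{s}N}/N^{1/2})$ coming from the discrete Laplacian piece, so each summand reduces to
\begin{align*}
\psi^{\x}_{\underline{s}} = \phi\!\left(\tfrac{Y^{\x}_{\underline{s}N+1}}{N^{1/2}}\right) - \tfrac{1}{2}\left[\phi\!\left(\tfrac{Y^{\x}_{\underline{s}N}+1}{N^{1/2}}\right) + \phi\!\left(\tfrac{Y^{\x}_{\underline{s}N}-1}{N^{1/2}}\right)\right].
\end{align*}
Because $Y^{\x}_{\underline{s}N+1}\in\{Y^{\x}_{\underline{s}N}\pm 1\}$ with equal conditional probability $\tfrac{1}{2}$, this is exactly the deviation of $\phi(Y^{\x}_{\underline{s}N+1}/N^{1/2})$ from its conditional mean given $\mathcal{F}^{(N)}_{\underline{s}N}$; evaluating both cases gives $\psi^{\x}_{\underline{s}} = \pm\tfrac{1}{2}[\phi((Y^{\x}_{\underline{s}N}+1)/N^{1/2}) - \phi((Y^{\x}_{\underline{s}N}-1)/N^{1/2})]$, and since $\phi\in C_b^2$, the mean-value theorem gives the deterministic bound $|\psi^{\x}_{\underline{s}}|\le \|\phi'\|_\infty/N^{1/2}$.

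Next I would compute the predictable bracket. Conditional on $\mathcal{F}^{(N)}_{\underline{s}N}$, the random-walk jumps of distinct particles $\x\sim \underline{s}$ are independent, so the $\{\psi^{\x}_{\underline{s}}\}_{\x\sim\underline{s}}$ are conditionally centred and uncorrelated. Hence
\begin{align*}
E\!\left[(\Delta M^{(s,N)}_{\underline{s}}(\phi))^{2}\,\big|\,\mathcal{F}^{(N)}_{\underline{s}N}\right] = \frac{1}{N^{2}}\sum_{\x\sim \underline{s}} E[(\psi^{\x}_{\underline{s}})^{2}\,|\,\mathcal{F}^{(N)}_{\underline{s}N}] \le \frac{\|\phi'\|_\infty^{2}}{N^{3}}\, B^{(N)}_{\underline{s}N}.
\end{align*}
Under Assumption B the annealed mean offspring per particle is $E[1+\beta\xi(n,x)/N^{1/4}]=1$, so $\{B^{(N)}_{n}\}_{n\ge 0}$ is a nonnegative martingale with $E[B^{(N)}_{n}]\equiv N$. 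Summing the conditional variances over the $O(KN)$ time-steps in $[0,K]$ therefore yields
\begin{align*}
E\!\left[\langle M^{(s,N)}(\phi)\rangle_{K}\right] \le \frac{\|\phi'\|_\infty^{2}}{N^{3}}\sum_{\underline{s}\le K} E[B^{(N)}_{\underline{s}N}] \le \frac{K\,\|\phi'\|_\infty^{2}}{N}.
\end{align*}

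Finally, Doob's $L^2$ maximal inequality gives $E[\sup_{t\le K}|M^{(s,N)}_{t}(\phi)|^{2}]\le 4 E[\langle M^{(s,N)}(\phi)\rangle_{K}]\le 4K\|\phi'\|_\infty^{2}/N\to 0$, which is the desired $L^2$ convergence. There is no genuine obstacle: the computation is driven purely by the bookkeeping of powers of $N$, the $N^{-1/2}$ gain being the derivative Taylor estimate on $\phi$, the $N^{-2}$ prefactor being the $1/N$ normalization squared, and the $O(N)$-fold time summation being compensated by the $N^{-1}$ from $E[B^{(N)}_{\underline{s}N}]/N^{3}\cdot N$ — the net rate $1/N$ is exactly what drives the diffusive random-walk martingale to zero. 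The only subtle point to monitor is the annealed vs.\ quenched conditioning when invoking $E[B^{(N)}_{n}]=N$, which is where Assumption A (or B) on the mean-one offspring distribution is used.
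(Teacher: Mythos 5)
Your proof is correct and follows essentially the same route as the paper: both arguments compute $E[(M^{(s,N)}_K(\phi))^2]$ via orthogonality of the martingale increments and the conditional independence of distinct particles' jumps, bound each particle's contribution by $O(N^{-1})$ using the smoothness of $\phi$, and control the time sum through the first-moment identity $E[B^{(N)}_{\underline{s}N}]=N$ (which the paper phrases via the historical-process identity of Lemma \ref{lem4} rather than by citing the martingale property of $B^{(N)}_n$ directly). Your observation that the summand collapses exactly to $\pm\tfrac{1}{2}\bigl[\phi((Y+1)/N^{1/2})-\phi((Y-1)/N^{1/2})\bigr]$ gives a marginally cleaner pointwise bound than the paper's $2a^2+2b^2$ estimate, but the mechanism is identical.
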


\begin{lem}\label{lem2} (See \cite[Lemma II 4.5]{Per}.)
Let $\dis \left(M_{\underline{t}}^{(N)},	\overline{{\cal F}}_{\underline{t}}^{N}		\right)$ be discrete time martingales with $\dis M_{0}^{(N)}=0$. \\
Let $\dis \langle M^{(N)}\rangle_{\underline{t}}
=\sum_{0\leq \underline{s}<\underline{t}}E\left[	\left.\left(M_{\underline{s}+1/N}^{(N)}-	M_{\underline{s}}^{(N)}\right)^{2}\right|			\overline{{\cal F}}_{\underline{s}}^{N}	\right]
$, and we extend $\dis M_{\cdot}^{(N)}$ and $\dis\langle	M^{(N)}	\rangle_{\cdot}$ to $[0,\infty)$ as right continuous step functions.
\begin{enumerate}[(i)]
\item If $\left\{	\langle		M^{(N)}	\rangle_{\cdot}:N\in \mathbb{N}			\right\}$ is $C$-relatively compact in $D([0,\infty),\mathbb{R})$ and \begin{align}
\sup_{0\leq \underline{t}\leq K}\left|		M^{(N)}_{\underline{t}+1/N}-M^{(N)}_{\underline{t}}			\right|\stackrel{P}{\to}0 \ \ \text{as }N\to \infty \ \quad \text{for all }K>0, \label{Pconv}
\end{align}
then $\dis M^{(N)}_{\cdot}$ is $C$-relatively compact in $D([0,\infty),\mathbb{R})$.
\end{enumerate}
\item If, in addition, \begin{align}
\left\{	\left(M_{\underline{t}}^{(N)}\right)^{2}+\langle	M^{(N)}		\rangle_{\underline{t}}:N\in \mathbb{N}			\right\} \qquad \text{is uniformly integrable for all }\underline{t},\notag
\end{align}
then $M^{(N_{k})}_{\cdot}\stackrel{w}{\Rightarrow} M_{\cdot}$ in $D([0,\infty),\mathbb{R})$ implies $M$ is a continuous $L^{2}$-martingale and $\dis \left(M^{(N_{k})}_{\cdot},\langle M^{(N_{k})}\rangle_{\cdot} \right)\stackrel{w}{\Rightarrow}\left(		M_{\cdot},\langle  M\rangle_{\cdot}	\right)$ in $D([0,\infty),\mathbb{R})$.
\end{lem}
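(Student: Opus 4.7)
The plan is to derive everything from a single workhorse identity: since the discrete martingale differences $\Delta M^{(N)}_{\underline s}:=M^{(N)}_{\underline s+1/N}-M^{(N)}_{\underline s}$ are orthogonal in $L^2$, the bracket $\langle M^{(N)}\rangle$ satisfies
\[ E\bigl[(M^{(N)}_{\underline t}-M^{(N)}_{\underline s})^2 \,\bigm|\, \overline{\cal F}^N_{\underline s}\bigr] \;=\; E\bigl[\langle M^{(N)}\rangle_{\underline t}-\langle M^{(N)}\rangle_{\underline s} \,\bigm|\, \overline{\cal F}^N_{\underline s}\bigr], \]
and Doob's $L^2$-inequality then gives $E[\sup_{t\le K}(M^{(N)}_t)^2]\le 4E[\langle M^{(N)}\rangle_K]$. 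For part (i) this immediately yields tightness of $\{M^{(N)}_t\}$ in $\mathbb{R}$ at each fixed $t$, because $\{\langle M^{(N)}\rangle_K\}$ is tight by the $C$-relative compactness hypothesis on the bracket. I would then verify Aldous' criterion: for any sequence of $\overline{\cal F}^N$-stopping times $\tau_N\le \tau_N+\delta_N\le K$ with $\delta_N\to 0$ in probability, orthogonality again gives
\[ E\bigl[(M^{(N)}_{\tau_N+\delta_N}-M^{(N)}_{\tau_N})^2\bigr] \;=\; E\bigl[\langle M^{(N)}\rangle_{\tau_N+\delta_N}-\langle M^{(N)}\rangle_{\tau_N}\bigr], \]
and the right-hand side tends to $0$ because $C$-relative compactness forces the modulus of continuity of $\langle M^{(N)}\rangle$ on $[0,K]$ to vanish in probability uniformly in $N$. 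This yields tightness of $\{M^{(N)}\}$ in $D([0,\infty),\mathbb{R})$; to promote it to $C$-relative compactness I note that the magnitude of jumps of any Skorokhod limit point is dominated by $\limsup_N\sup_{\underline t\le K}|M^{(N)}_{\underline t+1/N}-M^{(N)}_{\underline t}|$, which vanishes in probability by (\ref{Pconv}), so every limit is a.s.\ continuous.

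For part (ii), select a weakly convergent subsequence $M^{(N_k)}\stackrel{w}{\Rightarrow}M$ and invoke Skorokhod's representation so the convergence holds a.s.\ in $D([0,\infty),\mathbb{R})$; continuity of $M$ is already known from (i). The uniform integrability of $(M^{(N)}_t)^2+\langle M^{(N)}\rangle_t$ implies UI of $\{M^{(N_k)}_t\}$, so the martingale identity $E[M^{(N_k)}_t\Psi]=E[M^{(N_k)}_s\Psi]$ passes to the limit for every bounded continuous functional $\Psi$ of the path on $[0,s]$, which exhibits $M$ as a martingale with $E[M_t^2]<\infty$. For the joint convergence, the $C$-relative compactness of $\{\langle M^{(N_k)}\rangle\}$ lets me extract a further subsequence with $(M^{(N_k)},\langle M^{(N_k)}\rangle)\Rightarrow (M,A)$ for some continuous non-decreasing $A$. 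Using the UI hypothesis once more, I would pass the martingale property of $(M^{(N_k)})^2-\langle M^{(N_k)}\rangle$ to the limit, concluding that $M^2-A$ is a martingale; uniqueness of the Doob--Meyer decomposition of the continuous $L^2$-submartingale $M^2$ then identifies $A=\langle M\rangle$, which forces the full (not merely subsequential) joint convergence.

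The delicate step is the identification $A=\langle M\rangle$ in (ii): it requires the joint UI of $(M^{(N)}_t)^2+\langle M^{(N)}\rangle_t$, not merely UI of each piece separately, so that $E[(M^{(N_k)}_t)^2\Psi]$ and $E[\langle M^{(N_k)}\rangle_t\Psi]$ can be passed to their limits \emph{simultaneously}. This is exactly why the hypothesis is stated in that coupled form rather than as two separate UI conditions; once the identification is available the Doob--Meyer uniqueness closes the argument.
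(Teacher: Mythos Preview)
The paper does not supply a proof of this lemma; it is quoted verbatim from Perkins' lecture notes \cite[Lemma II.4.5]{Per} and used as a black box. So there is no in-paper proof to compare against, only the original reference.

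Your outline for part (ii) is correct and is the standard argument. One small remark: your closing comment that the hypothesis must be ``joint UI of $(M^{(N)}_t)^2+\langle M^{(N)}\rangle_t$'' rather than separate UI of the two pieces is a distinction without a difference, since for nonnegative summands UI of the sum is equivalent to UI of each summand.

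In part (i), however, there is a genuine gap: you repeatedly pass from tightness / convergence in probability to $L^1$ or $L^2$ control, and this is not justified by the hypothesis. $C$-relative compactness of $\{\langle M^{(N)}\rangle\}$ in $D([0,\infty),\mathbb{R})$ gives you tightness of $\langle M^{(N)}\rangle_K$ in $\mathbb{R}$ and uniform smallness of the modulus of continuity \emph{in probability}; it does not give $\sup_N E[\langle M^{(N)}\rangle_K]<\infty$, so Doob's $L^2$-inequality does not deliver tightness of $M^{(N)}_t$. Likewise your Aldous step asserts that $E[\langle M^{(N)}\rangle_{\tau_N+\delta_N}-\langle M^{(N)}\rangle_{\tau_N}]\to 0$, but the hypothesis only gives this increment $\to 0$ in probability, and without uniform integrability (which is only assumed in part (ii)) you cannot upgrade to $L^1$. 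The repair is the Lenglart domination inequality: since $E[(M^{(N)}_\tau)^2]=E[\langle M^{(N)}\rangle_\tau]$ for every bounded stopping time $\tau$, one has for any $\lambda,\eta>0$
\[
P\Bigl(\sup_{s\le T}\bigl|M^{(N)}_s\bigr|\ge\lambda\Bigr)\;\le\;\frac{\eta}{\lambda^{2}}+P\Bigl(\langle M^{(N)}\rangle_T+\sup_{\underline s\le T}\bigl(\Delta M^{(N)}_{\underline s}\bigr)^2\ge\eta\Bigr),
\]
and similarly for the stopped increments appearing in Aldous' criterion. Combined with the jump condition (\ref{Pconv}) this converts tightness of the bracket directly into tightness of $M^{(N)}$ without any moment bounds, which is how the proof in \cite{Per} proceeds.
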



\begin{lem}\label{lem3} For any $\phi\in C_b^2(\mathbb{R})$,
the sequence $\dis C_t^{(N)}\left(\phi\right)\equiv \int_{0}^{\underline{t}}X_{s}^{(N)}\left(A^{N}\phi\right)ds$ is $C$-relatively compact in $D([0,\infty),\mathbb{R}).$

\end{lem}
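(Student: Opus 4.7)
The plan is to exploit the fact that $C_t^{(N)}(\phi)$ is essentially Lipschitz in $t$, with Lipschitz constant controlled by the total mass process $X_t^{(N)}(1)$, and then deduce both the compact containment and the oscillation bounds of Theorem~\ref{thm2} (equivalently, an Aldous-type modulus of continuity criterion) from a single pathwise estimate.

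First, by a straightforward second-order Taylor expansion, for $\phi\in C_b^2(\mathbb{R})$ one has the uniform bound $\|A^{N}\phi\|_{\infty}\le \tfrac{1}{2}\|\phi''\|_{\infty}$. Consequently
\[
\bigl|C_t^{(N)}(\phi)-C_s^{(N)}(\phi)\bigr|
\le \tfrac{1}{2}\|\phi''\|_{\infty}\int_s^t X_u^{(N)}(1)\,du
\qquad\text{for }s\le t,
\]
and the jumps of $C^{(N)}(\phi)$ at the grid times $\underline{t}$ are of size at most $\tfrac{1}{2N}\|\phi''\|_{\infty}X_{\underline{t}}^{(N)}(\mathbf 1)$.

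The second key ingredient is to observe that the total mass $X_t^{(N)}(\mathbf 1)=B_{\underline{t}N}^{(N)}/N$ is a non-negative $({\cal F}_{\underline{t}N}^{(N)})$-martingale. Under Assumption~B the offspring mean conditioned on ${\cal F}_n^{(N)}$ equals $E[1+\beta\xi(n,Y_n^{\x})/N^{1/4}\,|\,{\cal F}_n^{(N)}]=1$ since $\xi(n,\cdot)$ is independent of ${\cal F}_n^{(N)}$ and mean zero; the same computation works under Assumption~A using $E[m^{(N,1)}_{0,0}]=1$ together with the i.i.d.\ assumption on the environment. Since the initial mass is $1$, Doob's maximal inequality yields
\[
P\Bigl(\sup_{t\le K+1}X_t^{(N)}(\mathbf 1)>M\Bigr)\le \frac{1}{M},
\qquad\text{uniformly in }N.
\]

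Combining these two ingredients, condition (ii) of Theorem~\ref{thm2} specialised to the real-valued sequence $C_\cdot^{(N)}(\phi)$ is obtained as follows. Compact containment in $\mathbb{R}$ is immediate from $|C_t^{(N)}(\phi)|\le \tfrac{K}{2}\|\phi''\|_{\infty}\sup_{s\le K}X_s^{(N)}(\mathbf 1)$. For the Aldous-type oscillation bound, given any sequence of stopping times $\tau_N\le \sigma_N\le \tau_N+\delta$ with $\tau_N\le K$, the Lipschitz estimate gives
\[
\bigl|C_{\sigma_N}^{(N)}(\phi)-C_{\tau_N}^{(N)}(\phi)\bigr|
\le \tfrac{\delta}{2}\|\phi''\|_{\infty}\sup_{s\le K+1}X_s^{(N)}(\mathbf 1),
\]
whose right-hand side tends to $0$ in probability as $\delta\downarrow 0$ uniformly in $N$. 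Finally, the jump bound $\tfrac{1}{2N}\|\phi''\|_{\infty}X_{\underline{t}}^{(N)}(\mathbf 1)\to 0$ in probability forces every weak limit point to be continuous, giving $C$-relative compactness.

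There is no genuine obstacle here: the whole argument is driven by the deterministic bound on $A^N\phi$ and the martingale property of the total mass. The only place where care is needed is in justifying that $X_t^{(N)}(\mathbf 1)$ is a genuine martingale (not merely conditionally on the environment), which is what allows Doob's inequality to be applied unconditionally and gives the uniform-in-$N$ compact containment.
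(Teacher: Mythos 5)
Your proof is correct and follows essentially the same route as the paper: the deterministic bound $\|A^N\phi\|_\infty\le\frac{1}{2}\|\phi''\|_\infty$ gives a Lipschitz-in-$t$ estimate with constant $\sup_{u\le K}X^{(N)}_u(1)$, and the martingale property of the total mass together with Doob's $L^1$-inequality (the paper's bound (\ref{bdd})) controls that supremum uniformly in $N$. The only cosmetic difference is that you package the conclusion via an Aldous-type criterion while the paper invokes the Arzel\`a--Ascoli theorem; the substance is identical.
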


When we can verify the conditions of Lemma \ref{lem2} for $M^{(b,N)}_{\cdot}(\phi)$, and  $M_{\cdot}^{(e,N)}(\phi)$, the sequence $\dis \left\{		X^{(N)}_{\cdot}(\phi):N\in \mathbb{N}	\right\}$ is $C$-relatively compact in $D([0,\infty),\mathbb{R})$. Moreover, if we check the condition of (i) in Theorem \ref{thm2}, then the tightness of $\left\{X^{(N)}_{\cdot}:N\in \mathbb{N}\right\}$ follows immediately.

\vspace{.8em}
Before starting the proof of the above lemmas, we prepare the following lemma. It tells us the mean of the measure $X_{\underline{t}}^{(N)}$ is the same as the distribution of the scaled simple random walk.

\begin{lem}\label{lem4}
We define \textit{historical process} by\begin{align}
H_{t}^{(N)}=\frac{1}{N}\sum_{\x\sim \underline{t}}\delta_{\frac{Y^{\x}_{(\cdot\wedge t)N}}{N^{1/2}}}\ \ \ \in {\cal M}_{F}\left(	D([0,\infty),\mathbb{R})			\right),\notag
\end{align}
where $Y^{\x}_{s}=Y^{\y}_{s}$ for $0\leq s< {|\x\wedge \y|+1}{}$, that is $Y^{\x}_{s}$ is the position of the $\left\lfloor sN\right\rfloor$-generation's ancestor of $\x$.

If $\psi:D([0,\infty),\mathbb{R})\to \mathbb{R}_{\geq 0}$ is Borel, then for any $t\geq 0$\begin{align}
E\left[		H_{t}^{(N)}(\psi)	\right]= E_Y\left[		\psi\left(	\frac{Y_{(\cdot \wedge t)N}}{N^{\frac{1}{2}}}	\right)		\right],\label{IP}
\end{align}
where $Y_{\cdot}$ is the trajectory of simple random walk on $\mathbb{Z}$.
In particular, for all $\phi\in {\cal B}_{+}(\mathbb{R}), $\begin{align}
E\left[	X_{\underline{t}}^{(N)}\left(\phi\right)		\right]=E_Y\left[\phi\left(\frac{Y_{\underline{t}N}}{N^{\frac{1}{2}}}\right)\right].\label{srw}
\end{align}
Moreover, for all $x$, $K>0$, we have that \begin{align}
P_Y\left(		\sup_{t\leq K }X_{t}^{(N)}(1)\geq x		\right)\leq x^{-1}.\label{bdd}
\end{align} 
\end{lem}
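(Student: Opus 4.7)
The proof of Lemma \ref{lem4} rests on the fact that the offspring distribution has mean $1$ \emph{in expectation over the environment}, i.e.\ $E[m_{n,x}^{(N,1)}] = 1$ (under Assumption B this is immediate since $E[\beta\xi(n,x)/N^{1/4}]=0$). The plan is to prove the three formulas in order: first the historical ``many-to-one'' identity (\ref{IP}), then deduce (\ref{srw}) as a one-time marginal, and finally get (\ref{bdd}) from Doob's maximal inequality applied to the martingale of the total mass.

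For (\ref{IP}), the idea is induction on $n = \underline{t}N$. The base case $n=0$ holds because under Assumption B the $N$ initial particles all sit at the origin, so $H_0^{(N)}(\psi) = \psi(0) = E_Y[\psi(Y_0/N^{1/2})]$. For the inductive step I would write
\[
H_{(n+1)/N}^{(N)}(\psi) = \frac{1}{N}\sum_{\x\sim n/N}\sum_{i=1}^{V^{\x}} \psi\bigl(Y^{\x i}_{(\cdot\wedge (n+1)/N)N}/N^{1/2}\bigr)
\]
and condition on $\mathcal{F}_n^{(N)}$. The random walk increment $Y^{\x i}_{n+1}-Y^{\x}_n$ is independent of $V^{\x}$ and of $\xi(n,Y^{\x}_n)$, so conditioning gives the factor $\frac12[\psi(\cdot\oplus +N^{-1/2}) + \psi(\cdot\oplus -N^{-1/2})]$ times $E[V^{\x}\mid\mathcal{F}_n^{(N)}]$. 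The latter equals $E[m_{n,Y^{\x}_n}^{(N,1)}\mid\mathcal{F}_n^{(N)}] = 1$ because $\xi(n,\cdot)$ is independent of $\mathcal{F}_n^{(N)}$. Taking unconditional expectation and applying the induction hypothesis to the function $\omega\mapsto \frac12[\psi(\omega\oplus +N^{-1/2}) + \psi(\omega\oplus -N^{-1/2})]$ recovers $E_Y[\psi(Y_{(\cdot\wedge(n+1)/N)N}/N^{1/2})]$ by one step of the simple random walk transition.

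The identity (\ref{srw}) is then just (\ref{IP}) applied to the cylindrical functional $\psi(\omega) = \phi(\omega_{\underline{t}})$. For (\ref{bdd}), the same conditional-expectation computation as above, applied with $\psi \equiv 1$, shows that $X_{\underline{t}}^{(N)}(1) = B_{\underline{t}N}^{(N)}/N$ is a nonnegative $(\mathcal{F}_{\underline{t}N}^{(N)})$-martingale with $E[X_{\underline{t}}^{(N)}(1)] = 1$. Since $X^{(N)}_t$ is constant on each interval $[\underline{t},\underline{t}+1/N)$, Doob's maximal inequality gives $P(\sup_{t\le K} X_t^{(N)}(1)\ge x) \le 1/x$.

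The only delicate point is the handling of the random environment in the conditional expectation of $V^{\x}$: one must use that $\xi(n,\cdot)$ has not yet been revealed at time $n$ (it lies outside $\mathcal{F}_n^{(N)}$) so that the unconditional offspring mean $1$ can be invoked. Otherwise the argument is a standard intensity/martingale computation for critical branching random walks, with the environment washing out at first-moment level.
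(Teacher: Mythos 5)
Your proposal is correct and follows essentially the same route as the paper: the paper proves (\ref{IP}) by iterating the conditional-expectation/Markov-property step (peeling off one generation at a time, using $E[V^{\x}\mid{\cal F}^{(N)}_{\underline{t}N-1}]=1$ since the environment at the current time is not yet revealed, and composing with an independent walk increment), which is your induction run in the reverse direction, and it likewise obtains (\ref{bdd}) from the martingale property of the total mass together with the $L^1$ maximal inequality. The only cosmetic difference is that you fold the averaging over the last step into a modified functional before invoking the induction hypothesis, whereas the paper expresses the same operation via an explicit path-concatenation with an independent increment $Z_1$.
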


To prove this lemma, we introduce the notation. For $x({\cdot}),y(\cdot)\in D([0,\infty),\mathbb{R})$ such that $y(0)=0$, 
\begin{align*}
(x/s/y)(t)=\begin{cases}
x(t)			&\,\,\text{if }0\leq t<t,\\
x(s)+y(t-s)&\,\,\text{if }t\geq s. 
\end{cases}
\end{align*}
Then, $(x/s/y)(\cdot)\in  D([0,\infty),\mathbb{R})$. 

\begin{proof}

(\ref{IP}) follows from the Markov property. Indeed, 
 we have \begin{align}
E\left[	H_{\underline{t}}^{(N)}(\psi)			\right]&=	E\left[\frac{1}{N}	\sum_{\y\sim\underline{t}}	\psi\left(	\frac{Y^{\y}_{(\cdot\wedge \underline{t})N}	}{N^{\frac{1}{2}}}	\right)	\right]		\notag\\
&=E\left[		\frac{1}{N}\sum_{\x\sim \underline{t}-\frac{1}{N}}\psi\left(	\frac{Y^{\x}_{(\cdot\wedge \underline{t})N}}{N^{\frac{1}{2}}}		\right)E\left[\left.V^{\x}		\right|{\cal F}^{(N)}_{\underline{t}N-1}		\right]		\right]\notag\\
&=E\left[	E_{Z_1}\left[	\frac{1}{N}\sum_{\x\sim \underline{t}-\frac{1}{N}}	 \psi\left(		\frac{\left(Y^{\x}_{\left(\cdot \wedge \left(\underline{t}-\frac{1}{N}\right)\right)N}/\underline{t}N/Z_{1}		\right)	(	(\cdot \wedge \underline{t})N	)}{N^{\frac{1}{2}}}	\right)	\right]\right]\notag,
\end{align}
where 
  $Z_1(\cdot)$ is a random function independent of $Y_{\underline{\cdot}N}^{\x}$ such that $Z_1(s)=0$ for $0\leq s<1$, $P(Z_1(s)=1 \text{ for }s\geq 1)=P(Z_1(s)=-1 \text{ for }\geq 1)=\frac{1}{2}$. Iterating this,
\begin{align}
&E\left[	H_{\underline{t}}^{(N)}(\psi)			\right]\notag\\	
&=E\left[E_{Z_1,Z_2}	\left[\frac{1}{N}\sum_{\x\sim \underline{t}-\frac{2}{N} }\psi\left(	\frac{		\left(\left(Y^{\x}_{(\cdot \wedge \underline{t}-\frac{2}{N})N}\Big/\underline{t}N-2\Big/	Z_2\right)\Big/\underline{t}N-1\Big/Z_1\right)	\left(\left(\cdot\wedge \underline{t}\right)N\right)	}{N^{\frac{1}{2}}}	\right)\right]\right]		\notag\\
&=E_Y\left[\psi\left(\frac{Y_{\left(\left(\cdot\wedge \underline{t}\right)N\right)}}{N^{\frac{1}{2}}}\right)\right],\notag
\end{align}
where $Z_2$ is independent copy of $Z_1$ and $Y(\cdot)$ is the trajectory of simple random walk. 
Also, (\ref{bdd}) follows from the fact that $X_{\underline{t}}^{(N)}(1)$ is an ${\cal F}_{\underline{t}N}^{(N)}$-martingale and from the $L^{1}$ inequality for non-negative submartingales and from (\ref{srw}).

\end{proof}



\begin{proof}[Proof of Lemma \ref{lem3}]
We know $X^{(N)}_0(\phi)=\phi(0)$. Also, we have that for any $K>0$\begin{align}
	\left| C_{t}^{(N)}(\phi)-C_{s}^{(N)}(\phi)\right|		&\leq \int_{\underline{s}}^{\underline{t}}\left|X^{(N)}_{\underline{u}}\left(	A^N\phi		\right)\right|du\notag\\
	&\leq \sup_{\underline{u}\leq K}C(\phi)X^{(N)}_{\underline{u}}(1)|\underline{t}-\underline{s}|.\label{relative}
\end{align}
We can use the Arzela-Ascoli Theorem by (\ref{bdd}) and (\ref{relative}) so that $\left\{C^{(N)}_{\cdot}(\phi):N\in \mathbb{N}\right\}$ are $C$-relatively compact sequences in $D\left([0,\infty),\mathbb{R}\right)$.
\end{proof}


\begin{proof}[Proof of Lemma \ref{lem1}]
Let $\dis h_{N}(y)=E^{y}\left[	\left(	\phi\left(\frac{Y_{1}}{N^{\frac{1}{2}}}\right)-\phi\left(\frac{Y_{0}}{N^{\frac{1}{2}}}\right)		\right)^{2}			\right]$. First, we remark that \begin{align*}
\phi\left(\frac{Y_{\underline{s}N+1}^{\x}}{N^{\frac{1}{2}}}\right)-			\phi\left(\frac{Y_{\underline{s}N}^{\x}}{N^{\frac{1}{2}}}\right)-	\frac{1}{N}A^{N}\phi\left(\frac{Y_{\underline{s}N}^{\x}}{N^{\frac{1}{2}}}\right)		
\end{align*} are orthogonal for $\x\not=\x'\sim \underline{s}$. Since $M_t^{(s,N)}(\phi)$ is a martingale, we have that
 \begin{align}
&E\left[\left(		M_{K}^{(s,N)}	(\phi)	\right)^{2}\right]\notag\\
&\hspace{1em}=\frac{1}{N^2}\sum_{\underline{s}<K}E\left[	\left(\Delta M_{\underline{s}}^{(s,N)}(\phi)\right)^2			\right]\notag\\
&\hspace{1em}=\frac{1}{N^{2}}\sum_{\underline{s}<K}E\left[		\sum_{\x\sim \underline{s}}E\left[\left.		\left(		\phi\left(\frac{Y_{\underline{s}N+1}^{\x}}{N^{\frac{1}{2}}}\right)-			\phi\left(\frac{Y_{\underline{s}N}^{\x}}{N^{\frac{1}{2}}}\right)-	\frac{1}{N}A^{N}\phi\left(\frac{Y_{\underline{s}N}^{\x}}{N^{\frac{1}{2}}}\right)		
		\right)	^{2}	\right|	{\cal F}_{\underline{s}N}^{(N)}	\right]	\right]\notag\\
&\hspace{1em}\leq\frac{2}{N}\sum_{\underline{s}<K}E\left[	\frac{1}{N}	\sum_{\x\sim \underline{s}}\left(	h_{N}\left(\frac{Y_{\underline{s}N}^{\x}}{N^{\frac{1}{2}}}\right)+\frac{1}{N^{2}}\|	A^{N}\phi		\|^{2}		\right)		\right]\notag\\
&\hspace{1em}\leq 2E\left[	\int_{0}^{K}\left(X_{s}^{(N)}	({h_{N}})+\|A^{N}\phi\|_{\infty}^{2}N^{-2}X_{s}^{(N)}(1)\right)ds			\right]\notag\\
&\hspace{1em}\leq 2\left(E_Y\left[\int_{0}^{K}\left(\phi\left(\frac{Y_{\underline{s}N+1}}{N^{\frac{1}{2}}}\right)-\phi\left(\frac{Y_{\underline{s}N}}{N^{\frac{1}{2}}}\right)\right)	^{2}ds			\right]		+\frac{K}{N^{2}}\sup_{N}\|A^N\phi\|_{\infty}^{2}X_{0}^{(N)}	(1)\right)\notag\\
&\hspace{1em}\to 0,\notag
\end{align}
where we have used Lemma \ref{lem4} and the fact that $\sup_{N}\|A^N\phi\|_{\infty}<\infty$ for $\phi\in C_b^2(\mathbb{R})$ and $\{X^{(N)}_{\underline{t}}(1):0\leq \underline{t}\leq \underline{K}\}$ is martingale with respect to ${\cal F}^{(N)}_{\underline{t}N}$ in the last line.
\end{proof}



Next, we will check the conditions in Lemma \ref{lem2} for $M_{\cdot}^{(b,N)}(\phi)$ and $M_{\cdot}^{(e,N)}(\phi)$, that is, \begin{enumerate}
\item $\dis \left\{		\left\langle		M^{(b,N)}	(\phi)				\right\rangle_{\cdot}+\left\langle	M^{(e,N)}	(\phi)	\right\rangle_{\cdot}:N\in \mathbb{N}		\right\}$ is $C$-relatively compact in $D([0,\infty), \mathbb{R})$.
\item $\dis  \sup_{0\leq \underline{t}\leq K}\left|	M_{\underline{t}+\frac{1}{N}}^{(b,N)}(\phi)-M_{\underline{t}}^{(b,N)}	(\phi)		+M_{\underline{t}+\frac{1}{N}}^{(e,N)}(\phi)-M_{\underline{t}}^{(e,N)}	(\phi)			\right|\stackrel{P}{\rightarrow}0$ as $N\to \infty$ for all $K>0$.
\item $\dis \left\{		\left(M_{\underline{t}}^{(b,N)}(\phi)\right)^{2}+\left(M_{\underline{t}}^{(e,N)}(\phi)\right)^{2}+\left\langle		M^{(b,N)}		(\phi)\right\rangle_{\underline{t}}+\left\langle	M^{(e,N)}	(\phi)	\right\rangle_{\underline{t}}		:N\in \mathbb{N}		\right\}$ is uniformly integrable for all $\underline{t}$.
\end{enumerate}

As we verified that  $M^{(b,N)}(\phi)$ and $M^{(e,N)}(\phi)$ are orthogonal, 
 we have that \begin{align}
\left\langle			M^{(b,N)}(\phi)+M^{(e,N)}(\phi)	\right\rangle_{\cdot}=\left\langle			M^{(b,N)}(\phi)\right\rangle_{\cdot}+\left\langle M^{(e,N)}(\phi)	\right\rangle_{\cdot}.\notag
\end{align}

Moreover, since under fixed environment $\dis \{\xi(n,x):(n,x)\in \mathbb{N}\times \mathbb{Z}\}$, $V^{\x}$ and $V^{\y}$ are independent for $\x\not=\y$, we have that 
\begin{align}
&\left\langle	M^{(b,N)}	(\phi)\right\rangle_{\underline{t}}\notag\\
&=\sum_{\underline{s}<t}E\left[	\left.	\left(		M_{\underline{s}+\frac{1}{N}}^{(b,N)}(\phi)-M_{\underline{s}}^{(b,N)}(\phi)		\right)^2	\right|		{\cal F}_{\underline{s}N}^{(N)}	\right]\notag\\
&=\frac{1}{N^{2}}\sum_{\underline{s}<t}\sum_{\x\sim \underline{t}}E\left[\left.\phi\left(	\frac{Y^{\x}_{\underline{t}N+1}}{N^{\frac{1}{2}}}		\right)^{2}\right|		{\cal F}^{(N)}_{\underline{t}N}		\right]E\left[	\left.	\left(			V^{\x}-1-\frac{\beta\xi\left(\underline{t}N,Y^{\x}_{\underline{t}N}\right)}{N^{\frac{1}{4}}}	\right)^{2}\right|{\cal F}^{(N)}_{\underline{t}N}			\right]					\notag\\
&=\frac{1}{N}\sum_{\underline{s}<t}X^{(N)}_{\underline{t}}\left(\phi^{2}\right)\left(1-\frac{\beta^2}{N^{\frac{1}{2}}}\right)\left(1+{\cal O}(N^{-\frac{1}{2}})\right)\notag\\
&=\left(	1+{\cal O}(N^{-\frac{1}{2}})	\right)\int_{0}^tX_s^{(N)}(\phi^2)ds,\notag
\intertext{and}
&\left\langle	M^{(e,N)}(\phi)		\right\rangle_{\underline{t}}\notag\\
&=\sum_{\underline{s}<t}E\left[	\left.	\left(		M_{\underline{s}+\frac{1}{N}}^{(e,N)}(\phi)-M_{\underline{s}}^{(e,N)}(\phi)		\right)^2	\right|		{\cal F}_{\underline{s}N}^{(N)}	\right]\notag\\
&=\frac{\beta^2}{N^{2}}\sum_{\underline{s}<t}\sum_{\x,\tilde{\x}\sim \underline{t}}E\left[\left.\phi\left(\frac{Y^{\x}_{\underline{t}N+1}}{N^{\frac{1}{2}}}\right)\phi\left(\frac{Y^{\tilde{\x}}_{\underline{t}N+1}}{N^{\frac{1}{2}}}\right)\right|		{\cal F}^{(N)}_{\underline{t}N}		\right]\frac{{\bf 1}\left\{Y^{\x}_{\underline{t}N}=Y^{\tilde{\x}}_{\underline{t}N}\right\}}{N^{\frac{1}{2}}}\notag\\
&=\frac{\beta^2}{N^{2}}\sum_{\underline{s}<t}\sum_{\x,\tilde{\x}\sim \underline{t}}\phi\left(\frac{Y^{\x}_{\underline{t}N}}{N^{\frac{1}{2}}}\right)^{2}\frac{{\bf 1}\left\{	Y^{\x}_{\underline{t}N}=Y^{\tilde{\x}}_{\underline{t}N}		\right\}}{N^{\frac{1}{2}}}\left(		1+{\cal O}(N^{-\frac{1}{2}})	\right)\notag\\
&=\frac{1+{\cal O}(N^{-\frac{1}{2}})}{N}\beta^2\sum_{\underline{s}<t}\sum_{x\in\mathbb{Z}}\phi\left(		\frac{x}{N^{\frac{1}{2}}}		\right)^2\frac{\left(B_{\underline{s}N,x}^{(N)}\right)^2}{N^{\frac{3}{2}}}\notag\\
&=\left(1+{\cal O}\left(N^{-\frac{1}{2}}\right)\right)\beta^2\int_{0}^t\sum_{x\in\mathbb{Z}}\phi\left(		\frac{x}{N^{\frac{1}{2}}}		\right)^2\frac{\left(B_{\underline{s}N,x}^{(N)}\right)^2}{N^{\frac{3}{2}}}ds,\notag
\end{align}
where $|{\cal O}(N^{-\frac{1}{2}})|\leq C_{\phi}N^{-\frac{1}{2}}$ for a constant $C_{\phi}$ that depends only on $\phi$.


Therefore, we have that \begin{align}
&\left\langle		M^{(b,N)}	(\phi)		\right\rangle_{\underline{t}}+\left\langle		M^{(e,N)}	(\phi)	\right\rangle_{\underline{t}}-\left\langle		M^{(b,N)}		(\phi)	\right\rangle_{\underline{s}}-\left\langle		M^{(e,N)}	(\phi)	\right\rangle_{\underline{s}}\notag\\
&\leq C_{\phi}\left(\left\langle M^{(b,N)}(1)\right\rangle_{\underline{t}}+\left\langle M^{(e,N)}(1)\right\rangle_{\underline{t}}-\left\langle M^{(b,N)}(1)\right\rangle_{\underline{s}}-\left\langle M^{(e,N)}(1)\right\rangle_{\underline{s}}\right)\notag\\
&=C\left(\left\langle	X^{(N)}(1)			\right\rangle_{\underline{t}}-\left\langle	X^{(N)}(1)			\right\rangle_{\underline{s}}\right),\label{Ccpt}
\end{align}
where we remark that $\left\{X_{\underline{t}}^{(N)}(1):0\leq \underline{t}\right\}$ is a martingale with respect to ${\cal F}_{\underline{t}N}^{(N)}$.

We will prove $C$-relative compactness of (\ref{Ccpt}) by showing the following lemma.


\begin{lem}\label{lem12}For any $K>0$\begin{align}
&\sup_{N}E\left[		\left(X^{(N)}_{\underline{K}}(1)\right)^{2}	\right]<\infty.\notag,\\
\intertext{and for any $\ve>0$,}
&\lim_{\delta\to 0}\sup_{N\geq 1}P\left(	\sup_{0\leq s\leq K}\left(\left\langle X^{(N)}(1)\right\rangle_{\underline{s}+\delta}-\left\langle X^{(N)}(1)\right\rangle_{\underline{s}}			\right)>\varepsilon\right)=0\notag.
\end{align}

\end{lem}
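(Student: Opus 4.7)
Both parts of the lemma reduce to moment bounds on $S_n := \sum_{x\in\mathbb{Z}}(B^{(N)}_{n,x})^2$, the number of ordered pairs of particles sharing a site at generation $n$. We apply the orthogonal decomposition (\ref{martiN}) with $\phi\equiv 1$: since $A^N 1=0$ and $M^{(s,N)}_{t}(1)=0$, one has $X^{(N)}_t(1)-1 = M^{(b,N)}_t(1)+M^{(e,N)}_t(1)$ with $M^{(b,N)}(1)\perp M^{(e,N)}(1)$. The explicit formulas derived above for the brackets combined with the martingale identity $E[X^{(N)}_s(1)]=1$ give
\begin{equation*}
E\!\left[(X^{(N)}_{\underline K}(1))^2\right] \;\le\; 1 + CK + C\beta^2\int_0^K \frac{E[S_{\underline s N}]}{N^{3/2}}\,ds,
\end{equation*}
so the first claim reduces to a uniform bound on this integral.

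The key estimate is $E[S_n]/N^{3/2}\le C/\sqrt{(n/N)\wedge 1}$. To obtain it, decompose the pairs $(\x,\y)\sim n/N$ according to the generation $k=|\x\wedge\y|\in\{-\infty,0,1,\ldots,n-1\}$ of the most recent common ancestor. For each $k$, condition on the environment, so that the two sub-lineages issuing from generation $k\vee 0$ become conditionally independent BRWs starting at a common site; then average over $\xi$, using the identity $E_\xi[(1+\beta\xi/N^{1/4})^2]=1+\beta^2/N^{1/2}$, to rewrite the expected count of same-site pairs at generation $n$ as
\begin{equation*}
E_{Y^1,Y^2}\!\left[\mathbf{1}\{Y^1_{m}=Y^2_{m}\}\prod_{r=0}^{m-1}\!\left(1+\frac{\beta^2}{\sqrt N}\mathbf{1}\{Y^1_r=Y^2_r\}\right)\right], \qquad m = n-(k\vee 0)-1,
\end{equation*}
with $Y^1,Y^2$ independent simple random walks from a common starting site. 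The product is dominated by $\exp(\beta^2 L_m(Y^1,Y^2)/\sqrt N)$, where $L_m$ is the intersection local time; since $L_m/\sqrt m$ has uniformly bounded exponential moments on $m\le KN$, the local-CLT bound $P(Y^1_m=Y^2_m)\le C/\sqrt m$ dominates. Weighting by the expected number of eligible ancestors ($\sim N$ for $k\ge 0$ and $\sim N^2$ for $k=-\infty$) and summing over $k$ gives the claim, proving Part~1.

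For the second claim, the same formulas yield
\begin{equation*}
\langle X^{(N)}(1)\rangle_{\underline s+\delta}-\langle X^{(N)}(1)\rangle_{\underline s} \;\le\; C\!\int_{\underline s}^{\underline s+\delta}\!\! X^{(N)}_u(1)\,du \;+\; C\beta^2\!\int_{\underline s}^{\underline s+\delta}\!\! \frac{S_{\underline uN}}{N^{3/2}}\,du.
\end{equation*}
The first integral is at most $\delta\sup_{u\le K+\delta}X^{(N)}_u(1)$, and Doob's $L^2$-inequality applied to the non-negative martingale $X^{(N)}(1)$, together with Part~1, gives $E[\sup_{u\le K}(X^{(N)}_u(1))^2]\le 4\sup_N E[(X^{(N)}_K(1))^2]<\infty$, so this contribution vanishes in probability uniformly in $s$ and in $N$ as $\delta\to 0$. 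For the second integral one needs a four-particle extension of the many-to-two argument above, yielding a uniform bound $E[(S_{\underline uN}/N^{3/2})^2]\le C$ for $u$ bounded away from $0$. Then Cauchy--Schwarz gives $E[(\int_{\underline s}^{\underline s+\delta} S_{\underline uN}/N^{3/2}\,du)^2]\le C\delta^2$, and partitioning $[0,K]$ into $\lceil K/\delta\rceil$ subintervals of length $\delta$ and applying Chebyshev with the union bound yields $P(\sup_s(\cdot)>\varepsilon)\le CK\delta/\varepsilon^2\to 0$. The small-time contribution near $u=0$ is absorbed by the integrable first-moment bound from Part~1.

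The main obstacle is the four-particle second-moment bound $E[S_n^2]=O(N^3)$: one must enumerate the coalescence patterns of four ancestral lineages and simultaneously control the six pairwise $\exp(\beta^2 L/\sqrt N)$ environment correction factors, whose sub-walks may coalesce at different times.
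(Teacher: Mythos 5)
Your proposal is correct and follows essentially the same strategy as the paper: everything is reduced to two-walk and four-walk collision moments for independent simple random walks with exponential environment corrections, which is exactly the content of Lemma \ref{lem13} and Corollary \ref{cor1} that the paper's proof invokes. The only genuine difference is in Part 1: you pass through the orthogonal bracket decomposition $X^{(N)}_{\underline K}(1)=1+M^{(b,N)}_{\underline K}(1)+M^{(e,N)}_{\underline K}(1)$ and bound $E[\langle M^{(e,N)}(1)\rangle_K]$ via $E[S_n]/N^{3/2}$, whereas the paper expands $X^{(N)}_{\underline K}(1)^2=N^{-2}\sum_{i,j}B^{(i,N)}B^{(j,N)}$, uses conditional independence of distinct ancestral lines given the environment, and cites \cite{NY} for the diagonal terms; both routes land on the same quantity $E_{Y^1Y^2}[(1+\beta^2 N^{-1/2})^{\sharp\{i\le KN:Y^1_i=Y^2_i\}}]$, so the difference is presentational. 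Part 2 is the paper's argument almost verbatim (four-particle moment bound, partition of $[0,K]$, Chebyshev plus union bound). Two points where your sketch is optimistic but survivable: (i) the step ``the local-CLT bound dominates the exponential correction'' is not a naive Cauchy--Schwarz (that would only give $Cm^{-1/4}$) but requires the renewal/chaos-expansion argument of Lemma \ref{lem13}, which you should cite rather than assert; (ii) the second-moment bound $E[(S_{\underline uN}/N^{3/2})^2]$ is not uniformly $O(1)$ but behaves like $C(K)/u$, so the per-interval estimate is $C\delta^2/s$ rather than $C\delta^2$ and the union bound yields $C\delta\log(K/\delta)/\ve^2$ --- exactly the paper's bound --- which still tends to $0$, so your conclusion stands.
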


\begin{proof}
We remark that for each $N$, $B_{n}^{(N)}$ is a martingale with respect to the filtration ${{\cal F}}^{(N)}_{n}$. 

Let $B_{n}^{(i,N)}$ be the total number of particles at time $n$ which are the descendants from $i$-th initial particle. Then, we remark that for $i\not= j$ \begin{align}
E\left[	B^{(i,N)}_{\lfloor	KN\rfloor}B^{(j,N)}_{\lfloor KN\rfloor}		\right]&=E\left[		E\left[	\left.		B_{\lfloor KN\rfloor}^{(i,N)}		\right|	{{\cal H}}	\right]		E\left[	\left.		B_{\lfloor KN\rfloor}^{(j,N)}		\right|	{{\cal H}}		\right]		\right]\notag\\
&=E_{Y^1Y^2}\left[	\left(1+\frac{\beta^2}{N^{\frac{1}{2}}}\right)^{\sharp \left\{i\leq \lfloor KN\rfloor :Y^1_{i}=Y^2_{i}			\right\}}		\right],\notag
\end{align}
where ${\cal H}$ is the $\sigma$-algebra generated by $\{\xi(n,x):(n,x)\in \mathbb{N}\times \mathbb{Z}\}$, and  $Y^1$ and $Y^2$ are independent simple random walks on $\mathbb{Z}$ starting from the origin.

On the other hand, \begin{align}
E\left[	\left(B^{(i,N)}_{\lfloor	KN\rfloor}\right)^{2}		\right]&=1+\sum_{k=1}^{\lfloor KN\rfloor-1 }cE_{Y^1Y^2}\left[	\left(1+\frac{\beta^2}{N^{\frac{1}{2}}}\right)^{\sharp \left\{k< i\leq \lfloor	KN	\rfloor :Y^1_{i}=Y^2_{i}\right\}}	:Y^1_{k}=Y^2_{k}		\right]+c\notag\\
&\leq \lfloor KN\rfloor E_{Y^1Y^2}\left[	\left(1+\frac{\beta^2}{N^{\frac{1}{2}}}\right)^{\sharp \left\{i\leq \lfloor KN\rfloor :Y^1_{i}=Y^2_{i}			\right\}}		\right],\notag
\end{align}
where $c=1-\frac{1}{N^{\frac{1}{2}}}<1$ \cite[Lemma 2.3]{NY}. Thus,  we have that \begin{align}
E\left[\left(X^{(N)}_{\underline{K}}(1)\right)^{2}\right]&\leq \frac{1}{N^{2}}\left(N(N-1)+N\lfloor KN\rfloor \right)E_{Y^1Y^2}\left[	\left(1+\frac{\beta^2}{N^{\frac{1}{2}}}\right)^{\sharp \left\{i\leq \lfloor KN\rfloor :Y^1_{i}=Y^2_{i}			\right\}}		\right]\notag\\
&\leq C(K) E_{Y^1Y^2}\left[	\left(1+\frac{\beta^2}{N^{\frac{1}{2}}}\right)^{\sharp \left\{i\leq \lfloor KN\rfloor :Y^1_{i}=Y^2_{i}			\right\}}		\right].\notag
\end{align}
Since $\dis E_{Y^1Y^2}\left[	\left(1+\frac{\beta^2}{N^{\frac{1}{2}}}\right)^{\sharp \left\{i\leq \lfloor KN\rfloor :Y^1_{i}=Y^2_{i}			\right\}}		\right]$ is bounded (Lemma \ref{lem13}),  we complete the proof.

Now, we turn to the proof of the latter part of the statement. Let $\delta>0$. It follows from the above argument that \begin{align*}
&\left\langle	X^{(N)}(1)			\right\rangle_{t}-\left\langle	X^{(N)}(1)			\right\rangle_{s}\\
&=\int_{s}^{t}\left(X_u^{(N)}(1)+\beta^2\sum_{x\in \mathbb{Z}}\frac{\left(B_{\lfloor uN\rfloor,x}^{(N)}\right)^2}{N^{\frac{3}{2}}}\right)du.
\end{align*}
We know that $\left|\int_s^tX_u^{(N)}(1)du\right|\leq \left(\sup_{u\leq K}X_u^{(N)}(1)\right)|t-s|$ and Lemma \ref{lem4} implies that this term converges in probability to $0$ as $|t-s|\to 0$ uniformly in $0\leq s\leq t\leq K$. So, it is enough to show that for any $\ve>0$\begin{align*}
\lim_{\delta\to 0}\sup_{N\geq 1}P\left(\sup_{0\leq s\leq K}\int_{s}^{s+\delta}\sum_{x\in \mathbb{Z}}	\frac{\left(B_{\lfloor uN\rfloor ,x}^{(N)}\right)^2}{N^{\frac{3}{2}}}du>\ve	\right)=0.
\end{align*}

We consider the segments $I_k^{\delta}=[2k\delta,2(k+1)\delta]$ for $0\leq k\leq \left\lfloor\frac{K}{2\delta}\right\rfloor$. Then, we have by Corollary \ref{cor1} that \begin{align}
&E\left[	\left(\int_{I_{\delta}^k}		\sum_{x\in \mathbb{Z}}	\frac{\left(B_{\lfloor uN\rfloor ,x}^{(N)}\right)^2}{N^{\frac{3}{2}}}du\right)^2	\right]\notag\\
&= \frac{1}{N^5}E\left[	\sum_{{s}=2k\delta N}^{2(k+1)\delta N}\sum_{t=2k\delta N}^{2(k+1\delta N)}	\sum_{x,y\in \mathbb{Z}}\left(	B_{\lfloor sN\rfloor ,x}^{(N)}		\right)^2\left(	B_{\lfloor tN\rfloor ,y}^{(N)}		\right)^2				\right]\notag\\
&\leq \frac{1}{N^5}\left(	\sum_{s=2k\delta N}^{2(k+1)\delta N}\sum_{x\in \mathbb{Z}}E\left[				\left(	B_{\lfloor sN\rfloor ,x}^{(N)}		\right)^4			\right]^{\frac{1}{2}}			\right)^2.\label{l4}
\end{align}
Corollary \ref{cor1} implies that \begin{align}
E\left[				\left(	B_{\lfloor sN\rfloor ,x}^{(N)}		\right)^4			\right]\leq (s\vee 1)^4N^4E_{Y^1Y^2Y^3Y^4}\left[	\left(1+\frac{7\beta^2}{N^{\frac{1}{2}}}\right)^{\sharp\{1\leq i\leq sN:Y^{a}_i=Y_i^{b},a,b\in \{1,2,3,4\}\}}		:\begin{subarray}{l}Y^a_{\lfloor sN\rfloor}=x,\\	a\in\{1,2,3,4\}\end{subarray}\right],\label{m4}
\end{align}
where we have used that for $N$ large enough, $\dis E\left[\left(1+\frac{\beta\xi(0,0)}{N^{\frac{1}{4}}}\right)^4\right]\leq 1+\frac{7\beta^2}{N^{\frac{1}{2}}}$. H\"{o}lder's inequality and Lemma \ref{lem13} imply that \begin{align*}
(\ref{m4})&\leq (s\vee 1)^4N^4E_{Y^1Y^2}\left[	\left(	1+\frac{7\beta^2}{N^{\frac{1}{2}}}		\right)^{6\sharp\{1\leq i\leq sN:Y^1_i=Y^2_i\}}:Y^1_{\lfloor sN\rfloor }=Y^{2}_{\lfloor sN\rfloor }=x			\right]P_{Y^1}\left(	Y^1_{\lfloor sN\rfloor }=x	\right)^2\\
&\leq \frac{(s\vee 1)^4N^4}{\left(sN\vee 1\right)^{\frac{1}{2}}}P_{Y^1}\left(	Y^1_{\lfloor sN\rfloor }=x	\right)^3.
\end{align*}
Thus, local limit theorem implies that \begin{align*}
&\text{(\ref{l4})}\leq \frac{C}{N}\left(\sum_{s=2k\delta N}^{2(k+1)\delta N}\sum_{x\in \mathbb{Z}}\frac{(K\vee 1)^2}{\left(sN\vee 1\right)^{\frac{1}{4}}}\frac{1}{(sN\vee 1)^\frac{1}{4}}P_{Y^1}\left(	Y^1_{\lfloor sN\rfloor }=x	\right)\right)^2\\
&\leq \frac{CK^4}{N}\left(		\sqrt{2(k+1)\delta N}-\sqrt{2k\delta N}			\right)^2.
\end{align*}
Thus, we obtained that \begin{align*}
P\left(		\int_{I_k^{\delta}}\sum_{x\in \mathbb{Z}}	\frac{\left(B_{\lfloor uN\rfloor ,x}^{(N)}\right)^2}{N^{\frac{3}{2}}}du>\ve	\right)\leq \frac{CK^4\delta }{\ve^2(\sqrt{2(k+1)}+\sqrt{2k})^2}.
\end{align*}
Since for each $0\leq s\leq K$, there is some $k$ such that $[s,s+\delta]\subset I_{k}^{\delta}\cup I_{k+1}^{\delta}$, we have that \begin{align*}
\sup_{N\geq 1}P\left(	\sup_{0\leq s\leq K}\int_{s}^{s+\delta}\sum_{x\in \mathbb{Z}}	\frac{\left(B_{\lfloor uN\rfloor ,x}^{(N)}\right)^2}{N^{\frac{3}{2}}}du>\ve					\right)&\leq 2\sum_{k=0}^{\frac{K}{\delta}}\frac{CK^4\delta }{\ve^2(\sqrt{2(k+1)}+\sqrt{2k})^2}\\
&\leq 2\frac{CK^4\delta \log\frac{K}{\delta}}{\ve^2}\to 0\, \, \text{as }\delta\to 0.
\end{align*}
\end{proof}

Also, we prove the following lemmas to check the conditions (1)-(3).

\begin{lem}\label{lem5}For $\phi\in C_{b}^{2}(\mathbb{R})$,\begin{align}
\lim_{N\to\infty }E\left[	\sum_{\underline{t}\leq K}|\Delta M^{(b,N)}_{\underline{t}}(\phi)+\Delta M^{(e,N)}_{\underline{t}}(\phi)|^{4}		\right]=0 \text{\hspace{3em} for all $K>0$}.\notag
\end{align}
\end{lem}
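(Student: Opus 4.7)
First observe that the drift and centered-variance pieces combine cleanly:
\begin{align*}
\Delta M^{(b,N)}_{\underline{t}}(\phi) + \Delta M^{(e,N)}_{\underline{t}}(\phi) \;=\; \frac{1}{N}\sum_{\x\sim\underline{t}}\phi\!\left(\frac{Y^\x_{\underline{t}N+1}}{N^{1/2}}\right)(V^\x-1),
\end{align*}
so by the elementary inequality $(a+b)^4\leq 8a^4+8b^4$ it suffices to show separately that $E[\sum_{\underline{t}<K}(\Delta M^{(b,N)}_{\underline{t}}(\phi))^4]\to 0$ and $E[\sum_{\underline{t}<K}(\Delta M^{(e,N)}_{\underline{t}}(\phi))^4]\to 0$.

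For the branching term I would condition on ${\cal G}^{(N)}_{\underline{t}N}\vee\sigma(Y^\x_{\underline{t}N+1}:\x\sim\underline{t})$. Under Assumption B the residuals $\tilde W_\x:=V^\x-1-\beta\xi(\underline{t}N,Y^\x_{\underline{t}N})/N^{1/4}$ are conditionally independent, centered, and uniformly bounded, so the Marcinkiewicz--Zygmund (Rosenthal) inequality yields
\begin{align*}
E\!\left[\Big(\sum_{\x\sim\underline{t}} c_\x\tilde W_\x\Big)^{\!4}\,\Big|\,\cdot\right] \;\leq\; C\!\left[\Big(\sum_\x c_\x^2\Big)^{\!2} + \sum_\x c_\x^4\right] \;\leq\; C\|\phi\|_\infty^4\bigl[(B^{(N)}_{\underline{t}N})^2+B^{(N)}_{\underline{t}N}\bigr],
\end{align*}
with $c_\x:=\phi(Y^\x_{\underline{t}N+1}/N^{1/2})$. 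Since Lemma \ref{lem12} gives $E[(B^{(N)}_{\underline{t}N})^2]=O(N^2)$ uniformly in $\underline{t}\leq K$, we obtain $E[(\Delta M^{(b,N)}_{\underline{t}}(\phi))^4]=O(N^{-2})$; summing over the $O(N)$ time steps in $[0,K)$ yields $O(N^{-1})\to 0$.

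For the environmental term I would group particles by their pre-jump site: with $T_x:=\sum_{\x\sim\underline{t},\,Y^\x_{\underline{t}N}=x}\phi(Y^\x_{\underline{t}N+1}/N^{1/2})$ we have $\Delta M^{(e,N)}_{\underline{t}}(\phi) = \beta N^{-5/4}\sum_x\xi(\underline{t}N,x)T_x$ and $|T_x|\leq\|\phi\|_\infty B^{(N)}_{\underline{t}N,x}$. Given ${\cal F}^{(N)}_{\underline{t}N}$ together with the new positions, the $\xi(\underline{t}N,x)$ are i.i.d.\,Rademacher across $x\in\mathbb{Z}$, so the same inequality yields
\begin{align*}
E[(\Delta M^{(e,N)}_{\underline{t}}(\phi))^4] \;\leq\; \frac{C\|\phi\|_\infty^4}{N^5}\,E\!\left[\Big(\sum_x(B^{(N)}_{\underline{t}N,x})^2\Big)^{\!2}+\sum_x(B^{(N)}_{\underline{t}N,x})^4\right].
\end{align*}

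The main obstacle is to control this last expectation uniformly in $\underline{t}\leq K$. I would use the many-body moment formula (Corollary \ref{cor1}) to express each term via four independent simple random walks $Y^1,\ldots,Y^4$ weighted by the meeting-count factor $(1+\beta^2/N^{1/2})^{\#\{i\leq\underline{t}N:\,Y^a_i=Y^b_i\}}$, which is bounded uniformly in $N$ thanks to Lemma \ref{lem13}. The local central limit theorem then supplies $P(Y^1_{\underline{t}N}=Y^2_{\underline{t}N})=O((\underline{t}N)^{-1/2})$ and $P(Y^1_{\underline{t}N}=\cdots=Y^4_{\underline{t}N})=O((\underline{t}N)^{-3/2})$ for $\underline{t}\geq 1/N$; together with the $N^4$ prefactor from four initial ancestors this gives $E[(\sum_x(B^{(N)}_{\underline{t}N,x})^2)^2]=O(N^3/\underline{t})$ and $E[\sum_x(B^{(N)}_{\underline{t}N,x})^4]=O(N^{5/2}/\underline{t}^{3/2})$, while the $\underline{t}=0$ term (all mass at the origin, $\sum_x B_x^2 = N^2$) is handled separately and contributes only $O(N^{-1})$ after division by $N^5$. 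Summing the resulting Riemann-type sums over $\underline{t}\in\{1/N,\ldots,\lfloor KN\rfloor/N\}$ gives $O((\log N)/N)\to 0$, which combined with the branching estimate closes the proof.
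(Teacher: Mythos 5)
Your proof is correct and follows essentially the same route as the paper: a conditional fourth-moment (Burkholder/Rosenthal) inequality for the branching increment, reducing its contribution to $O(N^{-1})$ via the second-moment bound of Lemma \ref{lem12}, and for the environmental increment the reduction to $\frac{1}{N^5}\sum_{x,y}E[(B^{(N)}_{\underline{t}N,x})^2(B^{(N)}_{\underline{t}N,y})^2]$ controlled by Corollary \ref{cor1}, Lemma \ref{lem13} and the local CLT, yielding the same $O((\log N)/N)$ bound. The only cosmetic difference is that the paper invokes Proposition \ref{prop1} where you use Rosenthal's inequality for conditionally independent summands, which is an equivalent device here.
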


\begin{lem}\label{lem6} For $\phi\in C_b^2(\mathbb{R})$,
\begin{align}
&\sup_{N}E\left[	\sup_{\underline{t}\leq K}\left|	M_{\underline{t}}^{(b,N)}(\phi)+M_{\underline{t}}^{(e,N)}(\phi)		\right|^{4}			\right]<\infty&\text{\hspace{3em}for all $K>0$},\notag
\intertext{and }
&E\left[	\left(\left\langle		M^{(b,N)}(\phi)+M^{(e,N)}(\phi)	\right\rangle_{K}\right)^2		\right]<\infty\ \ &\text{for all $K>0$.}\notag
\end{align}

\end{lem}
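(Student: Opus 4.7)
The plan is to establish the $L^2$ bound on the predictable quadratic variation first, and then to deduce the fourth-moment bound on the supremum by applying a Burkholder--Davis--Gundy type inequality to the discrete-time martingale $M^{(b,N)}(\phi) + M^{(e,N)}(\phi)$.

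For the second statement, the orthogonality verified earlier in Section \ref{3} gives
\[
\langle M^{(b,N)}(\phi) + M^{(e,N)}(\phi)\rangle_{\underline{t}} = \langle M^{(b,N)}(\phi)\rangle_{\underline{t}} + \langle M^{(e,N)}(\phi)\rangle_{\underline{t}},
\]
and the computations leading up to (\ref{Ccpt}) bound this by $C_\phi \langle X^{(N)}(1)\rangle_{\underline{t}}$. Hence it suffices to show $\sup_{N} E[\langle X^{(N)}(1)\rangle_K^2] < \infty$. Using
\[
\langle X^{(N)}(1)\rangle_K = (1+\mathcal{O}(N^{-1/2})) \int_0^K \left(X_s^{(N)}(1) + \beta^2 \sum_{x\in \mathbb{Z}} \frac{(B^{(N)}_{\lfloor sN\rfloor, x})^2}{N^{3/2}}\right) ds,
\]
Cauchy--Schwarz reduces the task to bounding $\sup_{s\leq K} E[(X_s^{(N)}(1))^2]$, which is uniform in $N$ by Lemma \ref{lem12}, and bounding $\sup_{s\leq K} E\bigl[\bigl(\sum_{x} (B^{(N)}_{\lfloor sN\rfloor, x})^2/N^{3/2}\bigr)^2\bigr]$ uniformly in $N$. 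Expanding the latter as $N^{-3}\sum_{x,y} E[(B^{(N)}_{\lfloor sN\rfloor, x})^2 (B^{(N)}_{\lfloor sN\rfloor, y})^2]$, I plan to apply Cauchy--Schwarz across the two sites, combined with the fourth-moment estimate for $B^{(N)}_{\lfloor sN\rfloor, x}$ derived in (\ref{m4}), and then sum over $\mathbb{Z}$ using the local central limit theorem exactly as in the latter half of the proof of Lemma \ref{lem12}.

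For the first statement, set $M_{\underline{t}}:=M_{\underline{t}}^{(b,N)}(\phi) + M_{\underline{t}}^{(e,N)}(\phi)$ and invoke the discrete-time Burkholder--Davis--Gundy inequality,
\[
E\left[\sup_{\underline{t}\leq K} |M_{\underline{t}}|^4\right] \leq C \, E\bigl[[M]_K^2\bigr], \qquad [M]_K := \sum_{\underline{s}< K} (\Delta M_{\underline{s}})^2.
\]
Since $[M]_{\underline{t}}-\langle M\rangle_{\underline{t}}$ is itself an $\mathcal{F}^{(N)}_{\underline{t}N}$-martingale with squared increments dominated by $4(\Delta M_{\underline{s}})^4$,
\[
E\bigl[[M]_K^2\bigr] \leq 2 E\bigl[\langle M\rangle_K^2\bigr] + 8 E\left[\sum_{\underline{s}< K} (\Delta M_{\underline{s}})^4\right],
\]
and both terms on the right are controlled uniformly in $N$: the first by the estimate just established, and the second (which in fact vanishes as $N\to\infty$) by Lemma \ref{lem5}.

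The principal obstacle is the uniform-in-$N$ control of $E\bigl[\bigl(\sum_x (B^{(N)}_{\lfloor sN\rfloor, x})^2/N^{3/2}\bigr)^2\bigr]$, since one must simultaneously manage the correlations between local populations at two distinct sites and the fluctuations induced by the random environment. The computation reduces to a fourth-moment estimate on $B^{(N)}_{\lfloor sN\rfloor, x}$ which itself expands into a four-walk coincidence sum; the exponential-in-coincidence factor must be tamed by the coincidence estimate (Lemma \ref{lem13}), and the local limit theorem is needed to sum out the spatial variable, all in direct parallel with the computations already carried out in the latter part of the proof of Lemma \ref{lem12}.
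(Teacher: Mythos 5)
Your overall architecture (Burkholder/BDG for the fourth moment of the supremum, reduction of $E[[M]_K^2]$ to $E[\langle M\rangle_K^2]$ plus the sum of fourth powers of jumps handled by Lemma \ref{lem5}, and the reduction of $\langle M^{(b,N)}(\phi)+M^{(e,N)}(\phi)\rangle$ to $\langle X^{(N)}(1)\rangle$) matches the paper, which applies Proposition \ref{prop1} directly. But there is a genuine gap in your treatment of the second statement: the quantity
\begin{align*}
\sup_{s\leq K}E\left[\left(\sum_{x\in\mathbb{Z}}\frac{\bigl(B^{(N)}_{\lfloor sN\rfloor,x}\bigr)^2}{N^{3/2}}\right)^2\right]
\end{align*}
is \emph{not} uniformly bounded in $N$. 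Under Assumption B one has $B^{(N)}_{0,0}=N$ deterministically, so at $s=0$ the inner sum equals $N^2/N^{3/2}=N^{1/2}$ and its square is $N$; more generally the fourth-moment estimate (\ref{m4}) only yields $E\bigl[\bigl(\sum_x (B^{(N)}_{\lfloor sN\rfloor,x})^2/N^{3/2}\bigr)^2\bigr]\leq C(s\vee 1)^4 N(sN\vee 1)^{-1}\sim C/s$ for $N^{-1}\leq s\leq 1$, which blows up like $N$ as $s\downarrow N^{-1}$. So the step ``Cauchy--Schwarz reduces the task to bounding the supremum over $s$'' throws away exactly the time-integration that makes the estimate work.

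The repair is to keep the time integral and apply Minkowski's integral inequality (equivalently, the double-sum Cauchy--Schwarz as in (\ref{l4})): since $E\bigl[\bigl(\sum_x (B^{(N)}_{\lfloor sN\rfloor,x})^2/N^{3/2}\bigr)^2\bigr]^{1/2}\leq C(K)\,s^{-1/2}$ and $s^{-1/2}$ is integrable at $0$, one gets
\begin{align*}
E\left[\left(\int_0^K\sum_{x}\frac{\bigl(B^{(N)}_{\lfloor sN\rfloor,x}\bigr)^2}{N^{3/2}}\,ds\right)^2\right]\leq\left(\int_0^K C(K)\,s^{-1/2}\,ds\right)^2<\infty
\end{align*}
uniformly in $N$, which is precisely the bound $\sum_{\underline{s},\underline{t}\leq K}E\bigl[\sum_{x,y}(B^{(N)}_{\underline{s}N,x})^2(B^{(N)}_{\underline{t}N,y})^2/N^5\bigr]\leq CK^4 N^{-1}(\sqrt{KN})^2$ that the paper extracts from the proof of Lemma \ref{lem12}. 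With that correction your argument for both statements goes through and is essentially the paper's.
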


If we prove these lemmas, then we can verify the condition of Theorem \ref{thm2} (ii).

\begin{proof}[Proof of the $C$-relatively compactness of $\{X_{\cdot}^{(N)}(\phi):N\in \mathbb{N}\}$]
When we look at the process $\left\{X_{\cdot}^{(N)}(\phi)\right\}$, it is divided into some processes, $X_0^{(N)}(\phi)$, $M^{(b,N)}_{\cdot}(\phi)$, $M^{(e,N)}_{\cdot}(\phi)$, $M^{(b,N)}_{\cdot}(\phi)$, and $C_{\cdot}^{(N)}(\phi)$. 

We know that $M^{(b,N)}_{\cdot}(\phi)$ and $X_0^{(N)}(\phi)$ converges to constant by Assumptions and Lemma \ref{lem1}. $C$-relative compactness of $C^{(N)}_{\cdot}(\phi)$ has been proved in Lemma \ref{lem3}.

Arzela-Ascoli's theorem and Lemma \ref{lem12} imply that $\dis \left\{\left\langle 		M^{(b,N)}(\phi)+M^{(e,N)}(\phi)	\right\rangle_{\cdot}:N\in\mathbb{N}\right\}$ is $C$-relatively compact in $D\left([0,\infty),\mathbb{R}\right)$.
Also, (\ref{Pconv}) follows from Lemma \ref{lem5} .
The uniform integrability of $\dis \left\{	\left(M_{\underline{t}}^{(b,N)}(\phi)+M_{\underline{t}}^{(e,N)}(\phi)\right)^{2}	+\left\langle		M^{(b,N)}(\phi)+M^{(e,N)}(\phi)	\right\rangle_{\underline{t}}		\right\}$ has been shown by Lemma \ref{lem12} and Lemma \ref{lem6}. Thus, we have checked all conditions in Lemma \ref{lem2} so that $\dis \left\{M^{(b,N)}_{\cdot}(\phi)+M^{(e,N)}_{\cdot}(\phi),\left\langle M^{(b,N)}(\phi)+M^{(e,N)}(\phi)\right\rangle_{\cdot}\right\}$ is $C$-relatively compact in $D([0,\infty),\mathbb{R})$. 

Thus, $\left\{		X_{\cdot}^{(N)}(\phi)	\right\}$ is $C$-relatively compact in $D([0,\infty),\mathbb{R})$ for each $\phi\in C_b^2(\mathbb{R})$.

\end{proof}

To prove Lemma \ref{lem5}, we will use the following proposition (see \cite{Bur}).


\begin{prop}\label{prop1}
Let $\phi:\mathbb{R}_{\geq 0}\to\mathbb{R}_{\geq 0}$ is continuous, increasing, $\phi(0)=0$ and $\phi(2\lambda)\leq c_{0}\phi(\lambda)$ for all $\lambda\geq 0$. $(M_{n}, {\cal F}_{n}) $ is a martingale, $M^{*}_{n}=\sup_{k\leq n}|M_{k}|$, $\dis \left\langle	M		\right\rangle_{n}=\sum_{i=1}^{n}E\left[\left.	\left(M_{k}-M_{k-1}\right)^{2}	\right|{\cal F}_{k-1}\right]+E[M_{0}^{2}]$, and $d^{*}_{n}=\max_{1\leq k\leq n}|M_{k}-M_{k-1}|$. Then, there exists $c=c(c_{0})$ such that \begin{align}
E\left[	\phi\left(	M_{n}^{*}	\right)		\right]\leq cE\left[\phi\left(			\left\langle M\right\rangle^{1/2}_{n}	\right)+\phi\left(d^{*}_{n}\right)\right].\notag
\end{align}
\end{prop}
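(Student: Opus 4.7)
Proposition~\ref{prop1} is a Burkholder--Davis--Gundy type inequality for a general modulus $\phi$ satisfying the doubling condition $\phi(2\lambda)\le c_0\phi(\lambda)$. The classical route is Burkholder's good-$\lambda$ method: first prove a distributional comparison between $M_n^\ast$ and the pair $(\langle M\rangle_n^{1/2},d_n^\ast)$, then integrate it against $\phi$, using the doubling property to absorb constants. Since the right-hand side contains $\phi(\langle M\rangle_n^{1/2})+\phi(d_n^\ast)\ge \phi\bigl(\langle M\rangle_n^{1/2}\vee d_n^\ast\bigr)$ (because $\phi$ is increasing with $\phi(0)=0$), it suffices to bound $E[\phi(M_n^\ast)]$ by a constant times $E[\phi(\langle M\rangle_n^{1/2}\vee d_n^\ast)]$.

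\textbf{Step 1 (good-$\lambda$ inequality).} I would show: for every $\beta>1$ there is a constant $C(\beta)$ such that for all $\delta\in(0,\beta-1)$ and all $\lambda>0$,
\begin{equation*}
P\bigl(M_n^\ast>\beta\lambda,\ \langle M\rangle_n^{1/2}\vee d_n^\ast\le\delta\lambda\bigr)\le \frac{C(\beta)\,\delta^{2}}{(\beta-1-\delta)^{2}}\,P(M_n^\ast>\lambda).
\end{equation*}
To prove this, introduce the stopping times $\sigma=\inf\{k:|M_k|>\lambda\}$, $\tau=\inf\{k:|M_k|>\beta\lambda\}$, and $\rho=\inf\{k:\langle M\rangle_{k+1}^{1/2}>\delta\lambda\}$. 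On the good event one has $\sigma\le\tau\le n\le\rho$, together with $d_n^\ast\le\delta\lambda$, and the stopped difference $N_k=M_{(k\vee\sigma)\wedge\tau\wedge\rho}-M_\sigma$ is a martingale whose predictable bracket is bounded by $\delta^{2}\lambda^{2}$. Meanwhile $|M_\tau-M_\sigma|\ge(\beta-1)\lambda-d_n^\ast\ge(\beta-1-\delta)\lambda$. Doob's $L^2$-inequality applied conditionally on $\mathcal{F}_\sigma$ then yields
\begin{equation*}
P\Bigl(\sup_k|N_k|\ge (\beta-1-\delta)\lambda\,\Big|\,\mathcal{F}_\sigma\Bigr)\,\mathbf{1}_{\{\sigma\le n\}}\le \frac{\delta^{2}}{(\beta-1-\delta)^{2}}\,\mathbf{1}_{\{\sigma\le n\}},
\end{equation*}
and integrating over $\{\sigma\le n\}=\{M_n^\ast>\lambda\}$ gives the claim.

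\textbf{Step 2 (integration).} Using $P(M_n^\ast>\beta\lambda)\le P(M_n^\ast>\beta\lambda,\langle M\rangle_n^{1/2}\vee d_n^\ast\le\delta\lambda)+P(\langle M\rangle_n^{1/2}\vee d_n^\ast>\delta\lambda)$, integrate against $d\phi(\lambda)$ and rescale to get
\begin{equation*}
E\bigl[\phi(M_n^\ast/\beta)\bigr]\le \frac{C(\beta)\delta^{2}}{(\beta-1-\delta)^{2}}E\bigl[\phi(M_n^\ast)\bigr]+E\Bigl[\phi\bigl(\tfrac{1}{\delta}(\langle M\rangle_n^{1/2}\vee d_n^\ast)\bigr)\Bigr].
\end{equation*}
The doubling bound iterated $k$ times gives $\phi(\lambda)\le c_0^{k}\phi(\lambda/2^{k})$, so choosing $\beta=2$ and $k$ accordingly yields $\phi(M_n^\ast)\le c_0\,\phi(M_n^\ast/2)$, and then $\delta$ small enough to make the coefficient of $E[\phi(M_n^\ast)]$ strictly less than one allows absorption of that term on the left. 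A second application of doubling removes the factor $1/\delta$ in the remaining term, and finally $\phi(a\vee b)\le\phi(a)+\phi(b)$ delivers the stated inequality with $c=c(c_0)$.

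\textbf{Main obstacle.} The principal subtleties are the \emph{a priori} finiteness needed before absorbing $E[\phi(M_n^\ast)]$ on the left, which I would handle by replacing $M$ with the bounded martingale $M_{\cdot\wedge T_L}$ for $T_L=\inf\{k:|M_k|>L\}$, proving the inequality for each $L$, and then letting $L\to\infty$ by monotone convergence; and the careful bookkeeping of constants so that after absorption only $c_0$ survives in the final constant, yielding a bound independent of $n$ and of the martingale.
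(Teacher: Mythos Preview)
Your proposal is correct and follows the classical good-$\lambda$ route due to Burkholder. The paper itself does not supply a proof of Proposition~\ref{prop1}; it merely states the result and refers the reader to \cite{Bur}, which is precisely where the good-$\lambda$ argument you outline originates. So there is nothing to compare against in the paper, but your sketch is faithful to the cited source, and the technical points you flag (predictability of $\rho$, the overshoot bound $|M_\sigma|\le\lambda+d_n^\ast$, truncation to secure finiteness before absorbing) are the right ones to watch.
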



\begin{proof}[Proof of Lemma \ref{lem5}]
It is enough to show that \begin{align}
\lim_{N\to\infty} E\left[	\sum_{\underline{t}\leq K}\left|		\Delta M^{(b,N)}_{\underline{t}}(\phi)	\right|^{4}+	\left|		\Delta M^{(e,N)}_{\underline{t}}(\phi)	\right|^{4}	\right]=0 \text{\hspace{3em} for all $K>0$.}\notag
\end{align}
Conditional on ${\cal G}_{\underline{t}N}^{(N)}$, $\dis \Delta M^{(b,N)}_{\underline{t}}(\phi)$ is a sum of mean $0$ independent random variables; $\dis W^{(b,\x,N)}:=\frac{1}{N}\phi\left(\frac{Y_{\underline{t}N+1}^{\x}}{N^{\frac{1}{2}}}\right)\left(	V^{\x}-1-\frac{\beta\xi\left(	\underline{t}N,Y^{\x}_{\underline{t}N}		\right)}{N^{\frac{1}{4}}}			\right)$. Applying Proposition \ref{prop1} into $\dis \sum_{\x\sim \underline{t}} W^{(b,\x,N)}$, we have
 \begin{align}
E\left[	\left.\left(	\sup_{i\leq B_{\underline{t}N}^{(N)}}\sum_{k=1}^{i}W^{(b,\x_{k},N)}	\right)^{4}			\right|{\cal G}_{\underline{t}N}^{(N)}\right]&
\leq c\left(		\sum_{i\leq B_{\underline{t}N}^{(N)}}	\left(\frac{C_{1}(\phi)\left(1-{\cal O}(N^{-1/2})\right)}{N^{2}}\right)^{2}	
+\left(\frac{C_{2}(\phi)}{N}	\right)^{4}	
\right).\notag
\end{align} 
Thus, \begin{align}
E\left[		\sum_{\underline{t}\leq K} \left|		\Delta M^{(b,N)}_{\underline{t}}(\phi)	\right|^{4}	\right]&\leq c\left(		\frac{C_{1}(\phi)^{2}(1-{\cal O}(N^{-1/2}))}{N^{4}}\cdot(KN)\cdot E[NX_{\underline{t}}^{(N)}(1)]	+KN\cdot \frac{C_{2}(\phi)^{4}}{N^{4}}	\right)\notag\\
&\to 0.\notag
\end{align}

Next, we will  prove that \begin{align}
\lim_{N\to\infty}E\left[	\sum_{\underline{t}\leq K}\left|	\Delta M_{\underline{t}}^{(e,N)}(\phi)		\right|^{4}			\right]=0\ \ \text{for all }K>0.\notag
\end{align}
It is clear that for $\phi\in C_{b}^{2}(\mathbb{R})$\begin{align}
E\left[			\left|			\Delta M_{\underline{t}}^{(e,N)}(\phi)		\right|^{4}	\right]&\leq C(\phi)E\left[		\sum_{x,y\in\mathbb{Z}}2\frac{\left(B^{(N)}_{\underline{t}N,x}\right)^{2}\left(B^{(N)}_{\underline{t}N,y}\right)^{2}}{N^{5}}		\right].\notag
\end{align}

Then, it follows from Corollary \ref{cor1} and the similar argument in the proof of Lemma \ref{lem12} that\begin{align*}
&\frac{E\left[	\left(B_{\underline{t}N,x}^{(N)}\right)^2	\left(B_{\underline{t}N,y}^{(N)}\right)^2			\right]}{N^5}\\
&\leq \frac{C(\underline{t}\vee 1)^4}{N}E_{Y^1Y^2Y^3Y^4}\left[	\left(	1+\frac{7\beta^2}{N^\frac{1}{2}{}}		\right)^{\sharp\{	1\leq i\leq \underline{t}N:Y^a_i=Y^b_i,a,b\in\{1,2,3,4\}\}}		:\begin{subarray}{l}Y^1_{\underline{t}N}=Y_{\underline{t}N}^2=x\\	Y^3_{\underline{t}N}=Y^b_{\underline{t}N}=y	\end{subarray}	\right]\\
&\leq \frac{C(\underline{t}\vee 1)^4}{N}\prod_{\begin{subarray}{c}a,b\in \{1,2,3,4\}	\\ a\not= b	\end{subarray}	}E_{Y^1Y^2Y^3Y^4}\left[	\left(	1+\frac{7\beta^2}{N^\frac{1}{2}{}}		\right)^{6\sharp\{	1\leq i\leq \underline{t}N:Y^a_i=Y^b_i\}}		:\begin{subarray}{l}Y^1_{\underline{t}N}=Y_{\underline{t}N}^2=x\\	Y^3_{\underline{t}N}=Y^b_{\underline{t}N}=y	\end{subarray}	\right]^{\frac{1}{6}}\\
&\leq \frac{C(\underline{t}\vee 1)^4}{N\sqrt{\underline{t}N}}P_{Y^1}\left(Y^1_{\underline{t}N}=x		\right)P_{Y^1}\left(Y^1_{\underline{t}N}=y	\right)\left(P_{Y^1}\left(Y^1_{\underline{t}N}=x		\right)\wedge P_{Y^1}\left(Y^1_{\underline{t}N}=y	\right)\right).
\end{align*}
Thus, we have that \begin{align*}
E\left[			\left|			\Delta M_{\underline{t}}^{(e,N)}(\phi)		\right|^{4}	\right]\leq C(\phi)(K\vee 1)^4\sum_{\underline{t}\leq K}\frac{1}{N\cdot \underline{t}N}\to 0,
\end{align*}
as $N\to \infty$.

\end{proof}


\begin{proof}[Proof of Lemma \ref{lem6}]
We apply Proposition \ref{prop1} into martingale $\dis M^{(b,N)}_{\underline{t}}(\phi)+ M^{(e,N)}_{\underline{t}}(\phi)$. Then, we have that \begin{align}
E\left[\sup_{\underline{t}\leq K}	\left(	M^{(b,N)}_{\underline{t}}(\phi)+	M^{(e,N)}_{\underline{t}}(\phi)\right)^{4}\right]\leq &c(\phi)\left(E\left[	\left(\left\langle	M^{(b,N)}(1)\right\rangle_{K}+\left\langle M^{(e,N)}(1)\right\rangle_{K}\right)^{2}	\right]\right.\notag\\
&\hspace{2em}\left.+	\sum_{\underline{t}\leq K}\left( \left|		\Delta M_{\underline{t}}^{(b,N)}(1)	\right|^{4}+\left|		\Delta M^{(e,N)}_{\underline{t}}(1)	\right|	^{4}\right)	\right)\notag.
\end{align}
The second term in the right hand side goes to $0$ as $N\to\infty$ by Lemma \ref{lem5}. The first term is bounded above by \begin{align}
CE\left[\sum_{\underline{s},\underline{t}\leq K}\left(\frac{X_{\underline{s}}^{(N)}(1)X_{\underline{t}}^{(N)}(1)}{N^{2}}+\beta^4\sum_{x,y\in\mathbb{Z}}\frac{\left(B_{\underline{ t}N,x}^{(N)}\right)^{2}}{N^{\frac{3}{2}}}\frac{\left(B_{ \underline{s}N,y}^{(N)}\right)^{2}}{N^{\frac{3}{2}}}\right)\right].\notag
\end{align} 
Since $X_{\underline{t}}^{(N)}(1)$ is a martingale, $\dis E\left[X_{\underline{s}}^{(N)}(1)X_{\underline{t}}^{(N)}(1)\right]=E\left[X_{\underline{s}}^{(N)}(1)X_{\underline{s}}^{(N)}(1)\right]$ for $\underline{s}\leq \underline{t}$. Thus, \begin{align}
E\left[\sum_{\underline{s},\underline{t}\leq K}\frac{X_{\underline{s}}^{(N)}(1)X_{\underline{t}}^{(N)}(1)}{N^{2}}\right]\leq K^{2}E\left[	\left(X_{\underline{K}}^{(N)}(1)\right)^{2}		\right]\notag
\end{align}
is bounded in $N$ for all $K$ by Lemma \ref{lem12}.

Also, we know that from the proof of Lemma \ref{lem12} that 
\begin{align}
\sum_{\underline{s},\underline{t}\leq K}E\left[		\sum_{x,y\in\mathbb{Z}}\frac{\left(B_{\underline{s}N,x}^{(N)}\right)^{2}\left(B_{\underline{t}N,y}^{(N)}\right)^{2}}{N^{5}}				\right]&\leq \frac{CK^4}{N}
\left(	\sqrt{KN}			\right)^2<\infty.\notag
\end{align}

\end{proof}


In the end of this subsection, we complete the proof of the tightness by  checking the condition (i) in Theorem \ref{thm2}. The proof follows the one in \cite[p155]{Per}
\begin{proof}[Check for  (i) in Theorem \ref{thm2}]
Let $\varepsilon,T>0$ and $\eta(\varepsilon)>0$ ($\eta$ will be chosen later). Let $K_{0}\subset D([0,\infty),\mathbb{R})$ be a compact set such that $\dis \sup_{N}P\left(  \frac{Y_{\cdot N}}{N^{\frac{1}{2}}}\in K_{0}^{c} \right)<\eta$. Let $K_T=\{y_{t},y_{t-}:t\leq T,y\in K_{0}\}$. Then, $K_T$ is compact in $\mathbb{R}$. Clearly, \begin{align}
\sup_{N}P\left(		\frac{Y_{Nt}}{N^{\frac{1}{2}}}\in K_T^{c}		\text{ for some }t\leq T\right)<\eta.\notag
\end{align}
Let \begin{align}R_{t}^{(N)}&=H_{t}^{(N)}\left(		y:y(s)\in K_T^{c}\text{ for some }s\leq t			\right)\notag\\
&=\frac{1}{N}\sum_{\x\sim \underline{t}}\sup_{\underline{s}\leq \underline{t}}{\bf 1}_{K_T^{c}}\left(\frac{Y^{\x}_{\underline{s}N}}{N^{\frac{1}{2}}}\right).\notag\end{align}
First, we will claim that $R_{t}^{(N)}$ is an ${\cal F}_{\underline{t}N}^{(N)}$-submartingale. Clearly, $R_{\cdot}^{(N)}$ is constant on $[\underline{t},\underline{t}+\frac{1}{N})$. So, it is enough to show that \begin{align}
E\left[	\left.R_{\underline{t}+\frac{1}{N}}^{(N)}-R_{\underline{t}}^{(N)}	\right| {\cal F}_{\underline{t}N}^{(N)}	\right]\geq 0\ \text{ a.s.}\label{submar}
\end{align}
We have \begin{align}
R_{\underline{t}+\frac{1}{N}}^{(N)}-R_{\underline{t}}^{(N)}&=\frac{1}{N}\sum_{\x\sim \underline{t}}\sup_{\underline{s}\leq \underline{t}+\frac{1}{N}}{\bf 1}_{K_T^{c}}\left(\frac{Y^{\x}_{\underline{s}N}}{N^{\frac{1}{2}}}\right)V^{\x}-\sup_{\underline{s}\leq \underline{t}}{\bf 1}_{K_T^{c}}\left(\frac{Y^{\x}_{\underline{s}N}}{N^{\frac{1}{2}}}\right)\notag\\
&\geq \frac{1}{N}\sum_{\x\sim \underline{t}}\left(V^{\x}-1\right)\sup_{\underline{s}\leq \underline{t}}{\bf 1}_{K_T^{c}}\left(\frac{Y^{\x}_{\underline{s}N}}{N^{\frac{1}{2}}}\right).\notag
\end{align}
The conditional expectation of the last term with respect to ${\cal F}_{\underline{t}N}^{(N)}$ is equal to $0$. Thus,  (\ref{submar}) is proved. Now we apply $L^{1}$-inequality for submartingale into $R^{(N)}_{\cdot}$ so that\begin{align}
P\left(\sup_{\underline{s}\leq T }X_{\underline{s}}^{(N)}\left(K_T^{c}\right)>\varepsilon \right)&\leq P\left(			\sup_{t\leq T}R^{(N)}_{t}>\varepsilon \right)\notag\\
&\leq \varepsilon ^{-1}E[R_{\underline{T}}^{(N)}]\notag\\
&\leq \varepsilon^{-1} P\left(\frac{Y_{sN}}{N^{\frac{1}{2}}}\in K_T^{c},\ \text{for some }s\leq T\right)\leq \varepsilon\notag
\end{align}
by taking $\eta(\varepsilon)=\varepsilon^{2}$.
\end{proof}

\subsection{Identification of the limit point process}
From the lemmas in section \ref{tight}, we know that for $\phi\in C^{2}_b(\mathbb{R})$, each term of \begin{align}
&Z^{(N)}_{\underline{t}}(\phi)=X^{(N)}_{\underline{t}}(\phi)-\phi(0)-\int_{0}^{\underline{t}}X^{(N)}_{\underline{s}}(A^N\phi)ds,\label{mar1}
\intertext{and}
&\left\langle Z^{(N)}(\phi)\right\rangle_{\underline{t}}=\left\langle M^{(b,N)}(\phi)\right\rangle_{\underline{t}}+\left\langle M^{(e,N)}(\phi)\right\rangle_{\underline{t}}+\left\langle		M^{(s,N)}(\phi)		\right\rangle_{\underline{t}}\notag
\end{align}
are $C$-relatively compact in $D([0,\infty),\mathbb{R})$ and we found by from Lemma \ref{lem2} that the limit points satisfy \begin{align}
Z_{t}(\phi)=X_{t}(\phi)-\phi(0)-\int_{0}^{t}\frac{1}{2}X_{s}(\Delta \phi)ds\notag
\intertext{and }
\left\langle		Z(\phi)		\right\rangle_{t}=\int_{0}^tX_s(\phi^2)ds+M^{(e)}_{t}(\phi),\notag
\end{align}
where $M^{(e)}_{t}(\phi)$ is a limit point of $M^{(e,N)}_{\underline{t}}(\phi)$. 
Therefore, we need to identify $M^{(e)}_{{t}}(\phi)$.

First, we give an approximation of  $X_{t}^{(N)}$ by some measure valued processes which have densities. 
For $(t,y)\in \mathbb{R}_{\geq 0}\times \mathbb{R}$, we define $u^{(N)}(t,y)$ by \begin{align}
u^{(N)}(t,y)=\frac{B^{(N)}_{tN,x}}{2\sqrt{N}}\ \text{ \hspace{3em}for $\underline{t}\leq t<\underline{t}+\frac{1}{N}$ and $\displaystyle y\in\left[\frac{x-1}{N^{\frac{1}{2}}},\frac{x+1}{N^{\frac{1}{2}}}\right),\ x\in\mathbb{Z} .$ }
\notag
\end{align}
Actually, integrating  $u^{(N)}(t,y)$ over $\left[\frac{x-1}{N^{\frac{1}{2}}},\frac{x+1}{N^{\frac{1}{2}}}\right)$ for each $x\in \mathbb{Z}$, they coincide with $\frac{B_{tN,x}^{(N)}}{N}$. Thus, we can regard $u^{(N)}(t,y)$ as an approximation of $X^{(N)}_{\cdot}$.

Also, $\left\langle M^{(e,N)}(\phi)\right\rangle_{\underline{t}}$ can be rewritten as \begin{align}
\left\langle M^{(e,N)}(\phi)\right\rangle_{\underline{t}}&=\int_{0}^{\underline{t}}\sum_{x\in\mathbb{Z}}\phi\left(\frac{x}{N^{\frac{1}{2}}}\right)^{2}\frac{\beta^2\left(B^{(N)}_{\lfloor sN\rfloor ,x }\right)^2}{N^{\frac{3}{2}}}ds\notag\\
&=2\beta^2{(1+{\cal O}(N^{-\frac{1}{2}}))}\int_{0}^{t}\int_{y\in\mathbb{R}}\phi(y)^{2}u^{(N)}(s,y)^2dyds.\notag
\end{align}
Therefore, we can conjecture that the limit point $M^{(e)}_{t}(\phi)$ is \begin{align}
2\beta^2\int_{0}^t\int_{y\in\mathbb{R}}\phi^2(y)u(s,y)^2dsdy\label{limitpoint}
\end{align}
if $u^{(N)}\Rightarrow u$ for some $u(s,y)$ in some sense. In the following, we will check that (\ref{limitpoint}) is true.

We denote by $\tilde{X}^{(N)}_{t}$ new measure-valued processes associated to $u^{(N)}(\cdot,\cdot)$, that is for $\phi\in C_{b}^{2}(\mathbb{R})$,\begin{align}
\tilde{X}_{t}^{(N)}(\phi)=\int_{\mathbb{R}}\phi(x)u^{(N)}(t,x)dx.\notag
\end{align}
Then, it is clear that for $C^{2}_b(\mathbb{R})$ and for any $K>0$\begin{align}
\limsup_{N\to\infty}E\left[\sup_{t<K}\left|\tilde{X}^{(N)}_{t}(\phi)-X_{t}^{(N)}(\phi)\right|\right]=0.\notag
\end{align}
Thus, $\left\{\tilde{X}^{(N)}_{\cdot}:N\in\mathbb{N}\right\}$ is $C$-relative compact in $D([0,\infty),{\cal M}_{F}(\mathbb{R}))$ and there are subsequences which weakly converges to $X_{\cdot}$, where $X_{\cdot}$  is the one given in (\ref{mar1}).

We will prove the following lemmas:
\begin{lem}\label{lem15}
Let $X_{\cdot}$ be a limit point of the sequence $\{X^{(N)}_{\cdot}:N\in\mathbb{N}\}$. Then, the measure valued process $\{X_{t}(\cdot):0\leq t<\infty\}$ is almost surely absolutely continuous for all $t>0$, that is there exists an adapted  Borel-measurable-function-valued process $\{u_t:t>0\}$ such that \begin{align}
X_{t}(dx)=u_{t}(x)dx,\ \ \text{for all }t>0,\ P\text{-a.s.}\notag
\end{align}
\end{lem}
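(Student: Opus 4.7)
I would exploit the approximating absolutely continuous measures $\tilde X^{(N)}_t(dx)=u^{(N)}(t,x)\,dx$ introduced just above the lemma. Since $\tilde X^{(N)}_t(\phi)$ and $X^{(N)}_t(\phi)$ share the same scaling limit, it suffices to show that the weak limit admits an $L^2$ density, and this I do via a uniform-in-$N$ $L^2$ bound on the family $\{u^{(N)}(t,\cdot)\}$, in the spirit of the Konno--Shiga argument for classical super-Brownian motion.

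\textbf{Step 1 (Uniform $L^2$-moment bound).} The core estimate is to establish that for every $t>0$ there is a constant $C(t)<\infty$ with
\begin{align*}
\sup_{N\ge 1}E\left[\int_{\mathbb{R}}u^{(N)}(t,x)^2\,dx\right]
=\sup_{N\ge 1}\frac{1}{2N^{3/2}}\sum_{x\in\mathbb{Z}}E\bigl[(B^{(N)}_{\underline{t}N,x})^2\bigr]
\le C(t).
\end{align*}
Conditioning on the environment, $\sum_x E[(B^{(N)}_{\underline{t}N,x})^2]$ admits a two-walk representation: up to a combinatorial factor from the branching tree it equals an expectation over two independent simple random walks $(Y^1,Y^2)$ started from the initial support, weighted by $(1+\beta^2/\sqrt{N})^L$ where $L$ is their intersection local time on $[0,\underline{t}N]$, with the pinning $Y^1_{\underline{t}N}=Y^2_{\underline{t}N}$. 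This is exactly the type of quantity handled in the proof of Lemma~\ref{lem12}; combining H\"older with Lemma~\ref{lem13} to control the exponential weight and the one-dimensional local limit theorem for the collision kernel yields the bound.

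\textbf{Step 2 (From $L^2$ bound to absolute continuity).} By Cauchy--Schwarz,
\begin{align*}
E\bigl[\tilde X^{(N)}_t(\phi)^2\bigr]=E\left[\left(\int\phi(x)u^{(N)}(t,x)\,dx\right)^2\right]\le C(t)\|\phi\|_{L^2}^2,
\end{align*}
uniformly in $N$ for every $\phi\in C^2_b(\mathbb{R})\cap L^2(\mathbb{R})$. Since $\tilde X^{(N)}_t(\phi)\Rightarrow X_t(\phi)$ along the converging subsequence, Fatou's lemma (together with the uniform integrability that one reads off from the fourth-moment bound in Lemma~\ref{lem6}) yields $E[X_t(\phi)^2]\le C(t)\|\phi\|_{L^2}^2$. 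Applying this to a countable dense family of test functions in $L^2(\mathbb{R})$, the map $\phi\mapsto X_t(\phi)$ extends almost surely to a bounded linear functional on $L^2(\mathbb{R})$, and Riesz representation produces a non-negative $u_t\in L^2(\mathbb{R})$ with $X_t(\phi)=\int\phi\,u_t\,dx$, i.e.\ $X_t(dx)=u_t(x)\,dx$ almost surely.

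\textbf{Step 3 (Joint measurability and adaptedness).} To exhibit a jointly measurable, ${\cal F}_t^X$-adapted version $(t,\omega,x)\mapsto u_t(x)$, I would work with the mollifications $u_t^\varepsilon(x):=\int p_\varepsilon(x-y)X_t(dy)$ coming from the heat semigroup: these are jointly continuous in $(t,x)$ and adapted by construction. The $L^2$ bound of Step~2 shows that $\{u_t^\varepsilon\}_{\varepsilon>0}$ is Cauchy in $L^2_{\mathrm{loc}}$ in probability as $\varepsilon\downarrow 0$ on any bounded time interval $[t_0,T]$ with $t_0>0$, and its limit yields the required adapted density process.

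\textbf{Main obstacle.} The delicate step is Step~1: controlling $\sum_x E[(B^{(N)}_{\underline{t}N,x})^2]$ uniformly in $N$. One must combine the intersection-local-time representation, Lemma~\ref{lem13} on the exponential weight, and the one-dimensional local limit theorem, mirroring the fourth-moment calculation at the end of the proof of Lemma~\ref{lem12}; the care is needed because the weight is exponential in the local time while the unweighted second moment already has size of order $tN$. Once this bound is in hand the remaining arguments are essentially soft.
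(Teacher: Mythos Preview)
Your overall strategy---control $\sup_N E\bigl[\int u^{(N)}(t,x)^2\,dx\bigr]$ and pass to the limit---is legitimate and different from the paper's route, and your Step~1 is correct: the second-moment estimate follows from Lemma~\ref{lem13} exactly as in the proof of Lemma~\ref{lem16}, giving
\[
\sum_{x\in\mathbb{Z}}\frac{E\bigl[(B^{(N)}_{\underline{t}N,x})^2\bigr]}{N^{3/2}}\le \frac{C(t\vee 1)^2}{\sqrt{t}}.
\]

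However, Step~2 contains a genuine error. The bound $E[X_t(\phi)^2]\le C(t)\|\phi\|_{L^2}^2$ is a statement in $L^2(\Omega)$ and does \emph{not} imply that $\phi\mapsto X_t(\phi)$ is almost surely a bounded functional on $L^2(\mathbb{R})$, so the Riesz representation argument fails. A concrete counterexample: on $\Omega=[0,1]$ with Lebesgue measure, set $X(\omega)=\delta_\omega$; then $E[X(\phi)^2]=\int_0^1\phi(\omega)^2\,d\omega\le\|\phi\|_{L^2}^2$, yet $X(\omega)$ is purely singular for every $\omega$. The correct way to exploit Step~1 is to use weak $L^2(\Omega\times\mathbb{R})$ compactness of $\{u^{(N)}(t,\cdot)\}$ directly (after Skorohod coupling) and identify the weak limit as a density for $X_t$, or equivalently to show $\sup_{\varepsilon>0}E\bigl[\int (X_t*p_\varepsilon)^2\,dx\bigr]<\infty$ (which follows from Step~1 since heat-kernel convolution is an $L^2$-contraction) and then argue via Lebesgue differentiation and uniform integrability on compacts that the singular part has zero mass.

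For comparison, the paper takes a more hands-on route: it shows that $X_t(\psi^x_\varepsilon)$ is Cauchy in $L^2(\Omega)$ as $\varepsilon\downarrow 0$, uniformly in $x$, by expanding $X_t^{(N)}(\psi^x_\varepsilon)-X_t^{(N)}(\psi^x_{\varepsilon'})$ through the martingale decomposition with time-dependent heat-kernel test functions and bounding each piece (Lemmas~\ref{lem7}--\ref{lem10}). This constructs the density pointwise in $x$ and then kills the singular part by a mass-conservation argument. Your approach, once repaired, is softer and shorter but yields less---the paper's Cauchy estimate in $\varepsilon$ is reused later in the uniqueness proof.
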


Define a sequences of measure valued processes $\dis \left\{\mu_{\cdot}^{(N)}(dx):N\in \mathbb{N}\right\}$ by \begin{align}
\mu_{t}^{(N)}(dx)=2\beta^2\int_{0}^t\left(u^{(N)}(s,x)\right)^{2}dxds.\notag
\end{align}

\begin{lem}\label{lem16}
For any $\ve>0$ and for any $T>0$, there exists a compact set $K^{\ve,T}\subset \mathbb{R}$ such that \begin{align}
\sup_{N}P\left(\sup_{t\leq T}\mu_{t}^{(N)}\left(\left(K^{\ve,T}\right)^c\right)>\ve\right)<\ve.\notag
\end{align}
\end{lem}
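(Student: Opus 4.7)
The plan is to bound $E\bigl[(\mu_T^{(N)}([-M,M]^c))^2\bigr]$ uniformly in $N$, show it tends to $0$ as $M\to\infty$, and then apply Markov's inequality. Since $t\mapsto \mu_t^{(N)}(A)$ is non-decreasing for every Borel $A$ (it is the time-integral of a non-negative density), we have $\sup_{t\le T}\mu_t^{(N)}(A)=\mu_T^{(N)}(A)$, so it suffices to take $K^{\ve,T}=[-M,M]$ and control the total mass at $t=T$.

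Converting back to particle counts via $u^{(N)}(s,x)=B^{(N)}_{\lfloor sN\rfloor,y}/(2\sqrt{N})$,
\[
\mu_T^{(N)}([-M,M]^c)=\beta^2\int_0^T\sum_{|y|>M\sqrt{N}} \frac{(B^{(N)}_{\lfloor sN\rfloor,y})^2}{N^{3/2}}\,ds.
\]
Squaring, taking expectation, and applying Cauchy-Schwarz to dominate each $E[(B^{(N)}_{\lfloor sN\rfloor,x})^2(B^{(N)}_{\lfloor tN\rfloor,y})^2]$ by $E[(B^{(N)}_{\lfloor sN\rfloor,x})^4]^{1/2}E[(B^{(N)}_{\lfloor tN\rfloor,y})^4]^{1/2}$ reduces the problem to controlling $\int_0^T\sum_{|x|>M\sqrt{N}}E[(B^{(N)}_{\lfloor sN\rfloor,x})^4]^{1/2}\,ds$. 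The fourth-moment bound established in the proof of Lemma \ref{lem12} (which relies on Corollary \ref{cor1}) gives
\[
E[(B^{(N)}_{\lfloor sN\rfloor,x})^4]^{1/2}\le C(T)\,N^2\,(sN)^{-1/4}\,P(Y_{\lfloor sN\rfloor}=x)^{3/2}.
\]
The local CLT bound $P(Y_n=x)\le C/\sqrt{n}$ together with the Gaussian tail $P(|Y_{\lfloor sN\rfloor}|>M\sqrt{N})\le Ce^{-cM^2/s}$ then yields
\[
\sum_{|x|>M\sqrt{N}}P(Y_{\lfloor sN\rfloor}=x)^{3/2}\le \frac{C}{(sN)^{1/4}}\,e^{-cM^2/s}.
\]

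Combining gives $\sum_{|x|>M\sqrt{N}}E[(B^{(N)}_{\lfloor sN\rfloor,x})^4]^{1/2}\le C(T)\,N^{3/2}\,s^{-1/2}\,e^{-cM^2/s}$; integrating over $s\in[0,T]$ using the crude bound $e^{-cM^2/s}\le e^{-cM^2/T}$ for $s\le T$ and then squaring produces $C(T)\,N^{3}\,e^{-2cM^2/T}$, which is exactly cancelled by the $N^{-3}$ from the two factors of $N^{-3/2}$ in the integrand, so
\[
E\bigl[(\mu_T^{(N)}([-M,M]^c))^2\bigr]\le C(T)\,e^{-2cM^2/T},
\]
uniformly in $N$. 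Markov's inequality then gives $P(\mu_T^{(N)}([-M,M]^c)>\ve)\le \ve^{-2}C(T)e^{-2cM^2/T}$, which is smaller than $\ve$ for $M$ large. The delicate point to check is the $N$-exponent accounting: the two factors of $N^{-3/2}$ in the definition of $\mu^{(N)}$ must be matched exactly by $N^2$ per factor from square-rooting the fourth-moment bound together with the $(sN)^{-1/2}$ contribution arising from $P^{3/2}$ summed against the Gaussian tail. This cancellation relies crucially on having the fourth-moment estimate: a plain Cauchy-Schwarz between a second moment and an endpoint probability would leave a spurious $\sqrt{N}$ and destroy the uniformity in $N$. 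The remaining ingredients---the monotonicity of $\mu_t^{(N)}(A)$ in $t$ and the Gaussian tail---are routine.
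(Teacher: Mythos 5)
Your proof is correct, and its skeleton is the same as the paper's: use the monotonicity of $t\mapsto\mu_t^{(N)}(A)$ to replace the supremum by the terminal value, bound a moment of $\mu_T^{(N)}((K)^c)$ by the site-pinned moment estimates for $B_{n,x}$ (Corollary \ref{cor1} together with Lemma \ref{lem13}), and finish with a random-walk tail bound and Markov's inequality. The difference is one of degree: the paper works with the \emph{first} moment, $P(\mu_T^{(N)}(K^c)>\ve)\le \ve^{-1}E[\mu_T^{(N)}(K^c)]$, which only requires the bound $E[(B^{(N)}_{\underline{s}N,x})^2]\le C(s\vee 1)^2N^2(\underline{s}N)^{-1/2}P_Y(Y_{\underline{s}N}=x)$; summing $P_Y(Y_{\underline{s}N}=x)$ over $x\in K^cN^{1/2}$ then directly yields $P_Y(Y_{\underline{s}N}\in K^cN^{1/2})$, and $K$ is chosen by tightness of the scaled walk. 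You instead square, apply Cauchy--Schwarz, and invoke the fourth-moment bound (\ref{m4}) from the proof of Lemma \ref{lem12}; your exponent bookkeeping ($N^2\cdot(sN)^{-1/4}$ from $E[B^4]^{1/2}$, an extra $(sN)^{-1/4}$ from summing $P^{3/2}$ via the local CLT, cancelling the two factors of $N^{-3/2}$) is accurate, and the integrability of $s^{-1/2}$ near $0$ saves the day. What your route buys is a quantitative Gaussian-in-$M$ decay of $E[(\mu_T^{(N)}([-M,M]^c))^2]$ uniformly in $N$; what it costs is reliance on the fourth-moment hypothesis $\sup_N E[m^{(N,4)}_{0,0}]<\infty$ and an $\ve^{-2}$ rather than $\ve^{-1}$ in Chebyshev, neither of which is needed for the statement. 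Since the fourth-moment machinery is already set up for Lemma \ref{lem12}, both proofs are available at the same price here, but the first-moment argument is the leaner one.
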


By using Lemma \ref{lem15} and Lemma \ref{lem16}, we can can identify the limit point process as follows:

\begin{proof}[Identification of the limit point processes]
We will verify that  if $\dis X^{(N_{k})}_{\cdot}(dx)\Rightarrow u(\cdot,x)dx$ as $N_{k}\to \infty$, then \begin{align}
\mu^{(N_k)}_{t}(dx)\Rightarrow \left(2\beta^2\int_{0}^tu(s,x)^{2}ds\right)dx.\label{u2}
\end{align}

Actually, $\dis\left\{\left(\mu^{(N)}_{t}(\cdot)\right)_{t\in[0,\infty)}:N\in\mathbb{N}\right\}$ are $C$-relatively compact in $D\left([0,\infty),{\cal M}_{F}(\mathbb{R})\right)$ if the conditions in Theorem \ref{thm2} are satisfied. However, we have already checked them in the proof of the tightness of $\{X^{(N)}_{\cdot}:N\in\mathbb{N}\}$ and Lemma \ref{lem16}.			 Thus, for any $\phi\in C_{b}^2(\mathbb{R})$, \begin{align}
\mu_{t}^{(N_{k})}\left(\phi\right)\Rightarrow \mu_t\left(\phi\right)\ \ \text{for subsequences }N_k\to \infty.\notag
\end{align} 
Also, we may consider this convergence is almost surely by Skorohod representation theorem, that is \begin{align}
\lim_{k\to\infty}\mu_{t}^{(N_{k})}\left(\phi\right)=\mu_{t}(\phi), \ \ \ \text{a.s.}\label{conv} 
\end{align}


Let $G_{N}(B,m)$ be the distributions of $u^{(N)}(t,x)$ for $B\in {\cal B}(\mathbb{R}_{\geq 0}\times \mathbb{R})$ and $m\in[0,\infty)$, that is \begin{align}
G_{N}(B,m)=\left|	\left\{(t,x)\in B:u^{(N)}(t,x)\leq m		\right\}	\right|,\notag
\end{align}
where $|\cdot|$ represents Lebesgue measure on $\mathbb{R}_{\geq 0}\times \mathbb{R}$.
Especially, \begin{align*}
G_N([0,t]\times \mathbb{R},m)=\frac{2}{N^{\frac{3}{2}}}\sharp\left\{	(n,x):n\leq \{0,\cdots,\lfloor tN\rfloor \},x\in\mathbb{Z}, B_{n,x}\leq 2m\sqrt{N}		\right\}.
\end{align*}
Then, the convergence of $u^{(N)}_{t}(\cdot)$ in (\ref{conv}) is equivalent to the convergence of the distributions $G_{N}(\cdot,\cdot)$.

Let $\mu^{(M,N)}_{t}(\cdot)$ be the truncated measure of $\mu_{t}^{(N)}(\cdot)$ for $M>0$, that is \begin{align}
\mu_{t}^{(M,N)}(dx)=\left(2\beta^2\int_{0}^t\left(u^{(N)}(s,x)\wedge M\right)^{2}ds\right)dx.\notag
\end{align}
Then, it is clear that for any bounded function $C^{2}_{b,+}(\mathbb{R})$\begin{align}
&\int_{0}^{t}\int_{\mathbb{R}}\phi(x)\left(u^{(N)}(s,x)\wedge M\right)^{2}dxds\notag\\
&\hspace{3em}=2\int_{0}^{t}\int_{\mathbb{R}}\int_{0}^{M}\phi(x)m^{2}G_{N}(dsdxdm)\notag\\
&\hspace{3em}+2\int_0^t\int_{\mathbb{R}}\int_M^{\infty}{\bf 1}_{\{u^{(N)}{(s,x)}>M\}}\phi(x)M^2G_N(dsdxdm).\notag
\end{align}
The last term converges to $0$ in probability as $N\to \infty$ and then $M\to \infty$. Indeed, we have that \begin{align*}
0\leq &\int_0^t\int_{\mathbb{R}}\int_{M}^{\infty}{\bf 1}\{u^{(N)}(s,x)>M\}\phi(x)M^2G_N(dsdxdm)\\
&\leq C(\phi)\frac{\left(B_{n,x}^{(N)}\right)^2}{N^{\frac{5}{2}}}\sharp\left\{	(n,x):n\leq \{0,\cdots,\lfloor tN\rfloor \},x\in\mathbb{Z}, B_{n,x}\geq 2M\sqrt{N}		\right\},
\end{align*}
and the last term converges to 0 in probability by Lemma \ref{lem6}.
Also, as $N_k\to \infty$  $\dis \int_{0}^{t}\int_{\mathbb{R}}\int_{0}^{M}\phi(x)m^{2}G_{N_k}(dsdxdm)$ converges almost surely to \begin{align}
\int_{0}^{t}\int_{\mathbb{R}}\int_{0}^{M}\phi(x)m^{2}G(dsdxdm)=\int_{0}^{t}\int_{\mathbb{R}}\phi(x)u(s,x)^2{\bf 1}\{u(t,x)\leq M\}dxds,\notag
\end{align}
where $G(\cdot,\cdot,\cdot)$ is the distribution of $u(t,x)$.
Thus, we have that for any $\phi\in C_{b,+}^{2}(\mathbb{R})$\begin{align}
\int_{0}^{t}\int_{\mathbb{R}}\phi(x)u(s,x)^{2}dxds 		&=\lim_{M\to\infty}\lim_{N_k\to\infty}\int_{0}^t 			\int_{\mathbb{R}}\int_0^M\phi(x)m^2G_{N_k}(dsdxdm)			\notag\\
&\leq \lim_{M\to\infty}\lim_{N_k\to\infty}\int_{0}^{t}\int_{\mathbb{R}}\phi(x) \left(u^{(N_k)}(t,x)\wedge M\right)^{2}dxds\notag\\
&\leq  \mu_{t}(\phi),\ \ \ \text{a.s.}\notag
\end{align}
Also, we know that  for bounded function $\phi\in C_{b,+}^{2}(\mathbb{R})$, for any $ t>0$ and for any $\ve>0$ \begin{align}
&\lim_{M\to\infty}\sup_{N}P\left(\left|\int_{0}^t\int_{\mathbb{R}}\phi(x)\left(\left(u^{(N)}(s,x)\right)^2-\left(u^{(N)}(s,x)\wedge M\right)^{2}\right)dxds\right|			>\ve		\right)\notag\\
&\leq 	\lim_{M\to\infty}		\sup_{N}P\left(	\left|		\int_0^t\int_{\mathbb{R}}\int_{M}^{\infty}\phi(x)m^2G_{N}(dsdxdm)		\right|>\ve		\right)	\notag\\
&=0,\notag
\end{align}
by Lemma \ref{lem6}.
Thus, for any bounded function $\phi\in C_{b,+}^2(\mathbb{R})$\begin{align}
\mu_t(\phi)&=\lim_{N_k\to \infty}2\beta^2\int_0^t\int_{\mathbb{R}}\phi(x)\left(u^{(N_k)}(t,x)\right)^{2}dxds\notag\\
&\leq 2\beta^2\int_0^t\int_{\mathbb{R}}\phi(x)u(t,x)^{2}dxds,\ \ \text{in probability.}\notag
\end{align}
This is true for $\phi\in C_{b}^2(\mathbb{R})$. Thus, we have proved (\ref{u2}).
\end{proof}


\begin{proof}[Proof of Lemma \ref{lem16}]
First, we remark that $M^{(e,N)}_{\underline{t}}(\phi)$ is an ${\cal F}^{(N)}_{\underline{t}N}$-martingale even if $\phi(x)={\bf 1}_{K}(x)$ for Borel measurable set $K$. Then, \begin{align}
\left\langle 	M^{(e,N)}(K^c)			\right\rangle_{\underline{t}}=\frac{1}{N}\sum_{\underline{s}< \underline{t}}\sum_{x\in K^{c}N^{\frac{1}{2}}}\frac{\left(\beta B^{(N)}_{\underline{s}N,x}\right)^2}{N^{\frac{3}{2}}}=2\beta^2{(1+{\cal O}(N^{-\frac{1}{2}}))}\mu_{t}(K^c)\notag
\end{align}
is an increasing process. Thus, we have that \begin{align}
P\left(	\sup_{t\leq T}\mu_{t}(K^c)>\ve	\right)&\leq P\left(	3\sup_{t \leq T}\left\langle M^{(e,N)}(K^c)\right\rangle_{\underline{t}}>\ve			\right)\notag\\
&\leq \ve^{-1}E\left[	\frac{3}{N}\sum_{\underline{s}< T}\sum_{x\in K^cN^{\frac{1}{2}}}	\frac{\left(\beta B^{(N)}_{\underline{s}N,x}\right)^2}{N^{\frac{3}{2}}}		\right]\notag\\
&\leq \ve^{-1}C\sum_{\underline{s}<T}\sum_{x\in K^cN^{\frac{1}{2}}}\frac{\beta^2(s\vee 1)^{2}}{N\sqrt{\underline{s}}}P_Y\left(Y_{\underline{s}N}=x\right)\notag\\
&\leq \ve^{-1}C\beta^2\sqrt{T}\left(\sup_{\underline{s}<T}P_Y\left(	Y_{\underline{s}N}\in K^cN^{\frac{1}{2}}		\right)\right)\notag\\
&\leq \ve\notag,
\end{align} 
by taking $K^{c}$ as a compact set in $\mathbb{R}$ such that $C\beta^2\sqrt{K}\sup_{\underline{s}<T}P_Y\left(	Y_{\underline{s}N}\in K^cN^{\frac{1}{2}}		\right)\leq \ve^2$,
where we used Lemma \ref{lem13} in the third inequality.
\end{proof}

In the rest of this section, we will prove Lemma \ref{lem15}.


For $\psi \in C_{b}^{1,2}([0,\infty)\times \mathbb{R}, \mathbb{R})$,  we define \begin{align}
{X}^{(N)}_{{t}}\left(\psi_{{t}}\right)=\sum_{\x\sim \underline{t}}		\frac{\psi\left({t},\frac{Y^{\x}_{\underline{t}N}}{N^{\frac{1}{2}}}\right)}{N},\label{tx}
\end{align}
where $\psi_{t}(x)=\psi(t,x)$. Also, we have the following equation \begin{align}
&{X}^{(N)}_{\underline{t}+\frac{1}{N}}\left(		\psi_{\underline{t}+\frac{1}{N}}	\right)-{X}^{(N)}_{\underline{t}}\left(		\psi_{\underline{t}}			\right)\notag\\
&=\sum_{\x\sim \underline{t}}\frac{\psi\left(\underline{t}+\frac{1}{N},\frac{Y^{\x}_{\underline{t}N+1}}{N^{\frac{1}{2}}}\right)}{N}\left(			V^{\x}-1-\frac{\beta\xi\left(	\underline{t}N,Y^{\x}_{\underline{t}N}	\right)}{N^{\frac{1}{4}}}		\right)\notag\\
&\hspace{2em}+\sum_{\x\sim \underline{t}}\frac{\psi\left(\underline{t}+\frac{1}{N},\frac{Y^{\x}_{\underline{t}N+1}}{N^{\frac{1}{2}}}\right)}{N}\frac{\beta\xi\left(\underline{t}N,Y^{\x}_{\underline{t}N}\right)}{N^{\frac{1}{4}}}\notag\\
&\hspace{2em}+\sum_{\x\sim \underline{t}}\frac{2\psi\left(	\underline{t}+\frac{1}{N},\frac{Y^{\x}_{\underline{t}N+1}}{N^{\frac{1}{2}}}	\right)-\psi\left(\underline{t}+\frac{1}{N},\frac{Y^{\x}_{\underline{t}N}+1}{N^{\frac{1}{2}}}\right)-\psi\left(\underline{t}+1/N,\frac{Y^{\x}_{\underline{t}N}-1}{N^{\frac{1}{2}}}\right)}{2N}
\notag\\
&\hspace{2em}+\sum_{\x\sim\underline{t}}\frac{\psi\left(\underline{t}+\frac{1}{N},\frac{Y^{\x}_{\underline{t}N}+1}{N^{\frac{1}{2}}}\right)+\psi\left(\underline{t}+\frac{1}{N},\frac{Y^{\x}_{\underline{t}N}-1}{N^{\frac{1}{2}}}\right)-2\psi\left(\underline{t},\frac{Y^{\x}_{\underline{t}N}}{N^{\frac{1}{2}}}\right)}{2N}	
\notag\\
&=:\Delta M^{(b,N)}_{\underline{t}+\frac{1}{N}}(\psi_{\underline{t}+\frac{1}{N}})+\Delta M^{(e,N)}_{\underline{t}+\frac{1}{N}}(\psi_{\underline{t}+\frac{1}{N}})\notag\\
&\hspace{2em}+\Delta M^{(s,N)}_{\underline{t}+\frac{1}{N}}(\psi_{\underline{t}+\frac{1}{N}})+\Delta C^{(N)}_{\underline{t}+\frac{1}{N}}(\psi_{\underline{t}+\frac{1}{N}}).\notag
\end{align}

For $i=b,e,s$, $M^{(i,N)}_{t}(\psi_{t})$ which are the sums of $\Delta M^{(i,N)}_{t}(\psi_{t})$ up to $t$ are martingales with respect to ${\cal F}_{\underline{t}N}^{(N)}$ as well as $M^{(i,N)}_{\cdot}(\phi)$ are.

We take $\psi$ as the shift of $\frac{1}{\sqrt{2\pi t}}\exp\left(	-\frac{x^2}{2t}		\right)$;\begin{align}
\psi^{x}_{t}(y)=\frac{1}{\sqrt{2\pi t}}\exp\left(-\frac{(y-x)^{2}}{2t}\right).\notag
\end{align}

Then, we have that for $\ve,\ve'>0$ and $t\geq\eta>0$\begin{align}
&E\left[	\left({X}^{(N)}_{t}\left(\psi^{x}_{\varepsilon}\right)-{X}^{(N)}_{t}\left(		\psi^{x}_{\varepsilon'}	\right)	\right)^{2}	\right]	\notag\\
&\hspace{2em}\leq 		\sum_{\underline{s}\leq \underline{t}}E\left[\left(\Delta M^{(b,N)}_{\underline{s}}\left(\psi^{x}_{	{t}+\ve-\underline{s}}-		\psi^{x}_{t+\ve'-\underline{s}}	\right)\right)^{2}\right]\label{Mb}\tag{Mb}\\
&\hspace{2em}+\sum_{\underline{s}\leq \underline{t}}E\left[\left(\Delta M^{(e,N)}_{\underline{s}}\left(\psi^{x}_{	{t}+\ve-\underline{s}}	-		\psi^{x}_{t+\ve'-\underline{s}}	\right)\right)^{2}\right]\label{Me}\tag{Me}\\
&\hspace{2em}+\sum_{\underline{s}\leq \underline{t}}E\left[\left(\Delta M^{(s,N)}_{\underline{s}}\left(\psi^{x}_{{t}+\ve-\underline{s}}	-		\psi^{x}_{t+\ve'-\underline{s}}	\right)\right)^{2}\right]	\label{Ms}\tag{Ms}\\
&\hspace{2em}+E\left[	\left(\sum_{\underline{s}\leq \underline{t}}	\Delta C^{(N)}_{\underline{s}}\left(\psi^{x}_{{t}+\ve-\underline{s}}-		\psi^{x}_{t+\ve'-\underline{s}}	\right)	\right)^{2}\right]\label{C}\tag{C}\\
&\hspace{2em}+\left(\psi^{x}_{t+\ve}(0)-\psi^{x}_{t+\ve'}(0)\right)^{2}\label{ini}\tag{Initial term}\\
&\hspace{2em}+E\left[\left(\sum_{\x\sim\underline{t}}\frac{\left(\psi^{x}_{\ve}-\psi^{x}_{t+\ve-\underline{t}}-\psi^{x}_{\ve'}+\psi^{x}_{t+\ve'-\underline{t}}\right)\left(\frac{Y^{\x}_{\underline{t}N}}{N^{\frac{1}{2}}}\right)}{2\sqrt{N}}\right)^{2}\right]\label{error}\tag{Error term}
\end{align}

Clearly, for fixed $\ve>0$, $\sup_{y}|\psi_{\ve}^{x}(y)-\psi^{x}_{t+\ve-\underline{t}}(y)|\leq \frac{C(\ve)}{N}$. So (\ref{error}) is bounded above by \begin{align}
E\left[		\left(	X_{\underline{t}}^{(N)}\left(\frac{C(\ve)+C(\ve')}{N}\right)\right)^{2}	\right]\to 0,\ \ \text{as }N\to\infty.\notag
\end{align}
Also, \begin{align}
(\text{\ref{ini}})\leq (\ve-\ve')^{2}\left((t+\ve)\wedge (t+\ve')\right)^{-3},\notag
\end{align}
where we have used \cite[Lemma III 4.5 (a)]{Per}, that is for $0\leq \delta\leq p$,\begin{align}
|\psi_{t+\ve}^{x}(y)-\psi^{x}_{t}(y)|^p\leq \left(\ve t^{-3/2}\right)^{\delta}\left(	\left(\psi^{x}_{t+\ve}(y)\right)^{p-\delta}+\left(\psi^x_{t}(y)\right)^{p-\delta}		\right)\label{normal}
\end{align}
for all $x,y\in\mathbb{R}$, $t>0$, and $\ve>0$.

\begin{lem}\label{lem7}
For $\ve,\ve'>0$ and $t\geq \eta>0$, \begin{align}
\lim_{N\to\infty}E\left[	\left(			\sum_{\underline{s}\leq \underline{t}}\Delta C_{\underline{s}}^{(N)}\left(		\psi^{x}_{t+\ve-\underline{s}}-\psi^{x}_{t+\ve'-\underline{s}}		\right)	\right)	^{2}		\right]=0.\notag
\end{align}
\end{lem}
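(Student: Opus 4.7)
The plan is to exploit the fact that $\phi_s(y):=\psi^{x}_{t+\ve-s}(y)-\psi^{x}_{t+\ve'-s}(y)$ satisfies the backward heat equation $\partial_s\phi_s+\tfrac{1}{2}\Delta\phi_s=0$; this forces a cancellation that makes each drift increment $\Delta C^{(N)}_{\underline{s}}(\phi_{\underline{s}})$ of order $N^{-3}$ per particle (rather than the naive $N^{-2}$ one would read off from the two finite differences separately), so the full sum is of sup-order $N^{-1}$ and its second moment is of order $N^{-2}$.

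Concretely, I would split
\begin{align*}
\Delta C^{(N)}_{\underline{s}}(\phi_{\underline{s}})&=\sum_{\x\sim \underline{s}-1/N}\frac{1}{2N}\Bigl[\phi_{\underline{s}}(\tfrac{Y^{\x}+1}{\sqrt{N}})+\phi_{\underline{s}}(\tfrac{Y^{\x}-1}{\sqrt{N}})-2\phi_{\underline{s}}(\tfrac{Y^{\x}}{\sqrt{N}})\Bigr]\\
&\quad+\sum_{\x\sim \underline{s}-1/N}\frac{1}{N}\Bigl[\phi_{\underline{s}}(\tfrac{Y^{\x}}{\sqrt{N}})-\phi_{\underline{s}-1/N}(\tfrac{Y^{\x}}{\sqrt{N}})\Bigr],
\end{align*}
apply fourth-order Taylor in $y$ to the first bracket and second-order Taylor in $s$ to the second. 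The leading contributions are $\tfrac{1}{2N^{2}}\phi''_{\underline{s}-1/N}(y)$ and $\tfrac{1}{N^{2}}\partial_s\phi_{\underline{s}-1/N}(y)=-\tfrac{1}{2N^{2}}\phi''_{\underline{s}-1/N}(y)$ (the second equality by the backward heat equation), so they cancel exactly, leaving the pointwise estimate
$$\Bigl|\tfrac{1}{2N}\bigl[\phi_{\underline{s}}(y+\tfrac{1}{\sqrt{N}})+\phi_{\underline{s}}(y-\tfrac{1}{\sqrt{N}})-2\phi_{\underline{s}-1/N}(y)\bigr]\Bigr|\le \tfrac{C}{N^{3}}\bigl(\|\phi^{(4)}\|_\infty+\|\partial_s^{2}\phi\|_\infty+\|\partial_s\phi''\|_\infty\bigr).$$
For $s\in[0,\underline{t}]\subset[0,t]$ the heat-kernel times $t+\ve-s$ and $t+\ve'-s$ stay above $\ve\wedge\ve'>0$, so standard Gaussian derivative bounds give each of these sup norms bounded by $C(\ve\wedge\ve')^{-5/2}$ uniformly in $s$.

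Summing the per-particle bound $CN^{-3}$ over the $NX^{(N)}_{\underline{s}-1/N}(1)$ particles in generation $\underline{s}-1/N$, and then over the $\lfloor\underline{t}N\rfloor+1$ time indices, yields
$$\Bigl|\sum_{\underline{s}\le\underline{t}}\Delta C^{(N)}_{\underline{s}}(\phi_{\underline{s}})\Bigr|\le \tfrac{C(\ve,\ve',t)}{N}\sup_{\underline{s}\le\underline{t}}X^{(N)}_{\underline{s}}(1).$$
Squaring, taking expectations, and applying Doob's $L^{2}$ inequality to the non-negative ${\cal F}^{(N)}_{\underline{s}N}$-martingale $X^{(N)}_{\underline{s}}(1)$ together with the uniform-in-$N$ bound on $E[X^{(N)}_{\underline{K}}(1)^{2}]$ from Lemma~\ref{lem12} gives a second moment of order $N^{-2}$, which tends to zero. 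The only delicate step is keeping track of the cancellation between the two Taylor expansions cleanly; all remaining estimates are immediate from Gaussian bounds on the heat kernel and the already-established $L^{2}$ control of the total mass.
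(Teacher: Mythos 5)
Your proof is correct and follows essentially the same route as the paper's: the same split of the drift increment into a spatial second difference plus a time difference, the same cancellation of the leading $N^{-2}$ terms via the backward heat equation $\partial_s\psi^{x}_{t+\ve-s}+\tfrac{1}{2}\Delta\psi^{x}_{t+\ve-s}=0$, and the same final appeal to the $L^{2}$ maximal inequality for the total-mass martingale together with Lemma~\ref{lem12}. The only difference is cosmetic: the paper keeps only an $O(N^{-5/2})$ per-particle remainder while you track the sharper $O(N^{-3})$, but either bound suffices for the conclusion.
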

\begin{proof}
\begin{align}
&\Delta C_{\underline{s}}^{(N)}\left(\psi^{x}_{t+\ve-\underline{s}}\right)\notag\\
&=\sum_{\x\sim\underline{s}}\frac{\psi^{x}_{t+\ve-\underline{s}-\frac{1}{N}}\left(\frac{Y^{\x}_{\underline{t}N}+1}{N^{\frac{1}{2}}}\right)+\psi^{x}_{t+\ve-\underline{s}-\frac{1}{N}}\left(\frac{Y^{\x}_{\underline{t}N}-1}{N^{\frac{1}{2}}}\right)-\psi^{x}_{t+\ve-\underline{s}}\left(\frac{Y^{\x}_{\underline{t}N}+1}{N^{\frac{1}{2}}}\right)-\psi^{x}_{t+\ve-\underline{s}}\left(\frac{Y^{\x}_{\underline{t}N}-1}{N^{\frac{1}{2}}}\right)}{2N}
\notag\\
&+\sum_{\x\sim\underline{s}}		\frac{\psi^{x}_{t+\ve-\underline{s}}\left(	\frac{Y^{\x}_{\underline{t}N}+1}{N^{\frac{1}{2}}}\right)+\psi^{x}_{t+\ve-\underline{s}}\left(	\frac{Y^{\x}_{\underline{t}N}-1}{N^{\frac{1}{2}}}		\right)-2\psi^x_{t+\ve-\underline{s}}\left(	\frac{Y^{\x}_{\underline{t}N}}{N^{\frac{1}{2}}}	\right)}{2N}
\notag\\
\leq& \sum_{\x\sim\underline{s}}\frac{1}{N^{2}}\left(\left.{\frac{\partial \psi^{x}\left(t+\ve-{s},\frac{Y^{\x}_{\underline{s}N}}{N^{\frac{1}{2}}}\right)}{\partial s}}\right|_{s=\underline{s}}+{\cal O}(N^{-\frac{1}{2}})\right)\notag\\
&+\sum_{\x\sim\underline{s}}\frac{1}{N^{2}}\left(		\left.	\frac{\partial^{2} \psi^{x}(t+\ve-\underline{s},y)}{2\partial y^{2}}\right|_{y=\frac{Y^{\x}_{\underline{s}N}}{N^{\frac{1}{2}}}}+{\cal O}(N^{-\frac{1}{2}})	\right)\notag
\end{align}
Since $\frac{\partial \psi^{x}(t+\ve-s,y)}{\partial s}+\frac{\partial^{2}\psi^{x}(t+\ve-s,y)}{2\partial y^{}2}=0$, the last equation is bounded above by\begin{align}
\left|	\Delta C_{\underline{s}}^{(N)}\left(		\psi^{x}_{t+\ve-\underline{s}}	\right)	\right|\leq C(\ve,\eta)\frac{X_{\underline{s}}^{(N)}(1)}{N^{\frac{3}{2}}}.\notag
\end{align}
Thus, \begin{align}
E\left[	\left(		\sum_{\underline{s}\leq \underline{t}}\Delta C_{\underline{s}}^{(N)}\left(		\psi^{x}_{t+\ve-\underline{s}}-\psi^{x}_{t+\ve'-\underline{s}}		\right)		\right)^{2}			\right]&\leq E\left[		\left(C(\ve,\eta)+C(\ve',\eta)\right)^{2}\sup_{\underline{s}\leq\underline{t}}\left(\frac{X^{(N)}_{\underline{s}}(1)	}{N^{\frac{1}{2}}}\right)^{2}\right]\notag\\
&\to 0 \ \ \text{as }N\to \infty.\notag 
\end{align}
Indeed, for each $N$,  $X^{(N)}_{\underline{s}}(1)$ is a martingale so that by $L^{2}$-maximum inequality and by Lemma \ref{lem12}, \begin{align}
\sup_{N}E\left[		\sup_{\underline{s}\leq \underline{t}}\left(X^{(N)}_{\underline{s}}(1)	\right)^{2}	\right] \leq 4\sup_{N}E\left[\left\langle		X^{(N)}(1)\right\rangle_{\underline{t}}\right]<\infty.\notag
\end{align}
\end{proof}

Thus, we have by Fatou's lemma that \begin{align}
&E\left[	\left(		{X}_{t}(\psi^{x}_{\ve})-X_{t}(\psi^{x}_{\ve'})		\right)^{2}			\right]			\notag\\
&\hspace{2em}\leq (\ve-\ve')^2(t+\ve\wedge \ve')^{-3}X_0(1)^2+\liminf _{N\to\infty}\left((\ref{Mb})+(\ref{Me})+(\ref{Ms})\right).\notag
\end{align}

Hereafter, we will see the right hand side .

\begin{lem}\label{lem8}
Suppose $\ve>\ve'>0$, $t\geq \eta>0$, and $0<\delta<\frac{1}{2}$. Then, for any $x\in \mathbb{R}$\begin{align}
\liminf_{N\to\infty}\ (\text{\ref{Mb}})\leq C_{\delta}(\ve-\ve')^{\delta}(t+\ve')^{-\delta}.\notag
\end{align} 
\end{lem}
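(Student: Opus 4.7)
The plan uses three ingredients: the conditional variance computation for $M^{(b,N)}$ already carried out in this section for $\langle M^{(b,N)}(\phi)\rangle$, the historical-process identity of Lemma \ref{lem4}, and the Gaussian increment estimate (\ref{normal}).

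Set $\phi_s := \psi^x_{t+\ve-s} - \psi^x_{t+\ve'-s}$. The same conditional independence of $V^\x$ and the random walk step that was exploited to compute $\langle M^{(b,N)}(\phi)\rangle$ yields
\begin{align*}
(\ref{Mb}) = \bigl(1+{\cal O}(N^{-1/2})\bigr)\, E\left[\int_0^{\underline{t}} X_s^{(N)}\bigl(\widetilde{\phi_s^2}\bigr)\, ds\right],
\end{align*}
where $\widetilde{\phi_s^2}$ is the one-step local average of $\phi_s^2$ dictated by the $\pm 1$ random walk move. Lemma \ref{lem4} converts this into $\int_0^{\underline{t}} E_Y\bigl[\widetilde{\phi_s^2}(Y_{\underline{s}N}/\sqrt{N})\bigr]\, ds$. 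Using the Gaussian heat kernel upper bound $P(Y_n=k)\leq C n^{-1/2} e^{-k^2/(2n)}$ for simple random walk on $\mathbb{Z}$, each integrand is bounded by $C\int \phi_s(y)^2 \psi^0_s(y)\,dy$; the contribution from $s\leq 1/N$ is $O(1/N)$ thanks to the uniform bound $\sup_{s\leq t}\|\phi_s\|_\infty<\infty$ (which uses $t\geq\eta$), so after passing to $\liminf_{N\to\infty}$ one obtains
\begin{align*}
\liminf_{N\to\infty}(\ref{Mb}) \leq C\int_0^t\int_{\mathbb{R}} \phi_s(y)^2 \psi^0_s(y)\,dy\,ds.
\end{align*}

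Next, apply (\ref{normal}) with $p=2$, base time $t+\ve'-s$, increment $\ve-\ve'$, and the given parameter $\delta$:
\begin{align*}
|\psi^x_{t+\ve-s}(y)-\psi^x_{t+\ve'-s}(y)|^2 \leq \bigl((\ve-\ve')(t+\ve'-s)^{-3/2}\bigr)^\delta \bigl((\psi^x_{t+\ve-s})^{2-\delta} + (\psi^x_{t+\ve'-s})^{2-\delta}\bigr).
\end{align*}
Each Gaussian convolution $\int (\psi^x_u)^{2-\delta}(y)\psi^0_s(y)\,dy$ is computed explicitly by completing the square: it equals $C_\delta\, u^{-(1-\delta)/2}\,\psi^x_{s+u/(2-\delta)}(0)$. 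Dropping the subgaussian factor and noting $s+u/(2-\delta)\geq (t+\ve')/(2-\delta)$ for both $u=t+\ve-s$ and $u=t+\ve'-s$ (valid when $\delta<1$), this quantity is at most $C_\delta (t+\ve'-s)^{-(1-\delta)/2}(t+\ve')^{-1/2}$.

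Finally, integrating in $s$, the combined exponent of $(t+\ve'-s)$ is $-(1+2\delta)/2$, which is integrable precisely because $\delta<1/2$, so $\int_0^t (t+\ve'-s)^{-(1+2\delta)/2}\,ds \leq C_\delta(t+\ve')^{(1-2\delta)/2}$. Multiplying by the prefactor $(t+\ve')^{-1/2}$ produces the exponent $(1-2\delta)/2-1/2 = -\delta$, and the bound $C_\delta(\ve-\ve')^\delta(t+\ve')^{-\delta}$ follows.

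The main technical point is the clean bookkeeping of Gaussian exponents in the convolution together with the discrete-to-continuous passage for the simple random walk density. The algebra is designed so that $-\delta$ emerges exactly; any weaker convolution estimate (for instance replacing $(t+\ve')^{-1/2}$ by $1$) would fail to recover the stated power, while losing the factor $\delta < \tfrac12$ would destroy the $s$-integrability.
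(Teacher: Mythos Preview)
Your argument is correct and follows essentially the same route as the paper: compute the conditional variance of $\Delta M^{(b,N)}$ and reduce via Lemma~\ref{lem4} to a simple-random-walk expectation, apply the heat-kernel increment bound (\ref{normal}) with exponent $\delta$, evaluate the resulting Gaussian convolution, and integrate in $s$ using $\delta<\tfrac12$. The only cosmetic difference is that the paper passes to the continuum via the invariance principle applied to $E_Y[(\psi^x_u(Y_{\underline sN}/\sqrt N))^{2-\delta}]$, whereas you invoke a pointwise Gaussian upper bound $P(Y_n=k)\leq Cn^{-1/2}e^{-k^2/(2n)}$ for the random-walk kernel; both lead to the same integral $\int_0^t\int_{\mathbb R}\phi_s(y)^2\psi^0_s(y)\,dy\,ds$ and the subsequent algebra is identical.
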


\begin{proof}
By Lemma \ref{lem4}, we have that for $\ve>\ve'>0$, for $t\geq \eta>0$, and for $0<\delta<\frac{1}{2}$
\begin{align}
&(\text{\ref{Mb}})\notag\\
&=\left(1-\frac{1}{N^{\frac{1}{2}}}\right)E\left[		\sum_{\underline{s}\leq \underline{t}}\sum_{z\in\mathbb{Z}}\frac{\left(		\psi^{x}_{t+\ve-\underline{s}}\left(	\frac{z}{N^{\frac{1}{2}}}		\right)-\psi^{x}_{t+\ve'-\underline{s}}\left(		\frac{z}{N^{\frac{1}{2}}}		\right)					\right)^2}{N^2}	B_{\underline{s}N,z}^{(N)}				\right]\notag\\
&\leq E_Y\left[	\sum_{\underline{s}\leq \underline{t}}\frac{\left(\psi^{x}_{t+\ve-\underline{s}}\left(\frac{Y_{\underline{s}N}}{N^{\frac{1}{2}}}\right)	-\psi^{x}_{t+\ve'-\underline{s}}\left(\frac{Y_{\underline{s}N}}{N^{\frac{1}{2}}}\right)		\right)^{2}}{N}			\right],
\notag
\intertext{and it follows from (\ref{normal}) that } 
&\leq \int_{0}^{\underline{t}}E_Y\left[	\left(\frac{\ve-\ve'}{(t+\ve'-s)^{\frac{3}{2}}}	\right)^{\delta}	\left(		\left(\psi_{t+\ve-s}^{x}\left(\frac{Y_{\underline{s}N}}{N^{\frac{1}{2}}}\right)	\right)^{2-\delta}+\left(\psi_{t+\ve'-s}^{x}\left(\frac{Y_{\underline{s}N}}{N^{\frac{1}{2}}}\right)	\right)^{2-\delta}	\right)\right]ds \notag.
\end{align}
Thus,  we have from invariance principle that 
\begin{align}
&\liminf_{N\to\infty}(\text{\ref{Mb}})\notag\\
&\leq \int_{0}^{{t}}\int_{\mathbb{R}}	\left(\frac{\ve-\ve'}{(t+\ve'-s)^{\frac{3}{2}}}	\right)^{\delta}	\left(		\left(\psi_{t+\ve-s}^{x}\left(y\right)	\right)^{2-\delta}+\left(\psi_{t+\ve'-s}^{x}\left(y\right)	\right)^{2-\delta}	\right)
\psi_{s}^{0}(y)dyds
\notag\\
&\leq (\ve-\ve')^{\delta}\int_{0}^{t}(t+\ve'-s)^{-\frac{3\delta}{2}}(2-\delta)^{-\frac{1}{2}}\left((t+\ve-s)^{\frac{\delta-1}{2}}\left(\frac{2-\delta}{t+\ve+(1-\delta)s}\right)^{\frac{1}{2}}\right)ds\notag\\
&+(\ve-\ve')^{\delta}\int_{0}^{t}(t+\ve'-s)^{-\frac{3\delta}{2}}(2-\delta)^{-\frac{1}{2}}\left((t+\ve'-s)^{\frac{\delta-1}{2}}\left(\frac{2-\delta}{t+\ve'+(1-\delta)s}\right)^{\frac{1}{2}}\right)ds\ \ \notag\\
&\leq C_{\delta}(\ve-\ve')^{\delta}(t+\ve')^{-\frac{1}{2}}\int_{0}^{t}(t+\ve'-s)^{-\frac{1}{2}-\delta}ds\leq C_{\delta}(\ve-\ve')^{\delta}(t+\ve')^{-\frac{1}{2}}(t+\ve')^{\frac{1}{2}-\delta}\notag,
\end{align}
where we have used the fact that $\int_{\mathbb{R}}\psi^{x}_{s}(y)\psi^{0}_{t}(y)dy=\psi^{0}_{t+s}(x)$ in the second inequality.

\end{proof}


\begin{lem}\label{lem9}
For all $x\in\mathbb{R}$, $\ve>\ve'>0$, and $t\geq \eta>0$, we have \begin{align}
\lim_{N\to\infty}\ (\text{Ms})=0.\notag
\end{align}\end{lem}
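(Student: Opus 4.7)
I expect the lemma to follow from the fact that $\Delta M^{(s,N)}_{\underline{s}}$ carries an extra factor of $N^{-1/2}$ compared with the branching/environmental parts, so that summing $tN$ terms of size $O(N^{-3})$ still leaves a factor $1/N$ in front of an integral that is bounded uniformly in $N$. More precisely, my plan is the following.

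First, I would compute the conditional second moment of the increment. Since, given $\mathcal{F}^{(N)}_{\underline{s}N}$, the moves $Y^{\x}_{\underline{s}N+1} - Y^{\x}_{\underline{s}N}$ are i.i.d. $\pm 1$ Rademacher across $\x\sim\underline{s}$, the per-particle summands in $\Delta M^{(s,N)}_{\underline{s}}(\phi_{\underline{s}})$ (with $\phi_{\underline{s}}=\psi^{x}_{t+\ve-\underline{s}}-\psi^{x}_{t+\ve'-\underline{s}}$) are orthogonal and each has conditional variance equal to $\tfrac14\bigl(\phi_{\underline{s}}((Y^{\x}_{\underline{s}N}+1)/\sqrt N)-\phi_{\underline{s}}((Y^{\x}_{\underline{s}N}-1)/\sqrt N)\bigr)^{2}$. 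Hence
\begin{align*}
E\!\left[\bigl(\Delta M^{(s,N)}_{\underline{s}}(\phi_{\underline{s}})\bigr)^{2}\right]
=\frac{1}{4N^{2}}\,E\!\left[\sum_{\x\sim\underline{s}}\Bigl(\phi_{\underline{s}}\!\tfrac{Y^{\x}_{\underline{s}N}+1}{\sqrt N}-\phi_{\underline{s}}\!\tfrac{Y^{\x}_{\underline{s}N}-1}{\sqrt N}\Bigr)^{2}\right].
\end{align*}

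Second, I would Taylor expand the inner difference: uniformly in $y$, $\phi_{\underline{s}}((y+1)/\sqrt N)-\phi_{\underline{s}}((y-1)/\sqrt N)=\tfrac{2}{\sqrt N}\phi'_{\underline{s}}(y/\sqrt N)+O(N^{-3/2}\|\phi'''_{\underline{s}}\|_{\infty})$. Squaring and using Lemma \ref{lem4} to replace $E[X^{(N)}_{\underline{s}}(f)]$ by $E_{Y}[f(Y_{\underline{s}N}/\sqrt N)]$ gives
\begin{align*}
E\!\left[\bigl(\Delta M^{(s,N)}_{\underline{s}}(\phi_{\underline{s}})\bigr)^{2}\right]
\leq \frac{1}{N^{2}}E_{Y}\!\left[\bigl(\phi'_{\underline{s}}(Y_{\underline{s}N}/\sqrt N)\bigr)^{2}\right]+\frac{C\|\phi''_{\underline{s}}\|_{\infty}^{2}}{N^{3}}E_{Y}[1].
\end{align*}
Summing in $\underline{s}\le\underline{t}$ (which involves $\lfloor tN\rfloor$ terms spaced by $1/N$) yields
\begin{align*}
(\text{Ms})\leq \frac{1}{N}\int_{0}^{\underline{t}}E_{Y}\!\left[\bigl(\phi'_{\underline{s}}(Y_{\underline{s}N}/\sqrt N)\bigr)^{2}\right]\!ds+\frac{Ct\sup_{s\le t}\|\phi''_{s}\|_{\infty}^{2}}{N^{2}}.
\end{align*}

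Third, I need uniform control on the two error terms. Since $\psi^{x}_{u}(y)=(2\pi u)^{-1/2}e^{-(y-x)^{2}/(2u)}$, an elementary computation gives $\|(\psi^{x}_{u})'\|_{\infty}\leq C u^{-1}$ and $\|(\psi^{x}_{u})''\|_{\infty}\leq C u^{-3/2}$. Because $s\in[0,t]$ forces $t+\ve-s$ and $t+\ve'-s$ to stay bounded below by $\ve'>0$, both $\|\phi'_{s}\|_{\infty}$ and $\|\phi''_{s}\|_{\infty}$ are bounded by a constant $C(\ve',t)$. Consequently the integrand above is bounded by $C(\ve',t)^{2}$, and we conclude
\begin{align*}
(\text{Ms})\leq \frac{C(\ve',t,\ve)}{N}\longrightarrow 0 \qquad\text{as }N\to\infty.
\end{align*}

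The only mild technical point is the Taylor remainder — one has to check that $\sup_{s\le t}\|\phi'''_{s}\|_{\infty}<\infty$, which again holds because $t+\ve'-s\ge\ve'>0$. Everything else reduces to the identity from Lemma \ref{lem4} and standard Gaussian bounds, so there is no serious obstacle; the key conceptual input is simply that the spatial-fluctuation martingale carries an extra $1/\sqrt N$ per step that the other two pieces of the decomposition do not.
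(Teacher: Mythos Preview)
Your argument is correct and follows essentially the same route as the paper, which simply defers to the proof of Lemma~\ref{lem1} (the reference printed as ``Lemma~\ref{lem3}'' is a mislabel). Both arguments exploit that, conditionally on ${\cal F}^{(N)}_{\underline{s}N}$, the per-particle summands in $\Delta M^{(s,N)}_{\underline{s}}$ are orthogonal and equal to $\pm\tfrac{1}{2N}\bigl(\phi_{\underline{s}}((Y^{\x}+1)/\sqrt N)-\phi_{\underline{s}}((Y^{\x}-1)/\sqrt N)\bigr)$, then use Lemma~\ref{lem4} together with the fact that this increment is $O(N^{-1/2})$ because $\phi_{\underline{s}}'$ is bounded. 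The paper's version handles this last point via $h_N\to 0$ in the proof of Lemma~\ref{lem1}; you do it quantitatively via Taylor expansion and the explicit Gaussian bounds $\|(\psi^{x}_{u})'\|_\infty\le Cu^{-1}$, $\|(\psi^{x}_{u})''\|_\infty\le Cu^{-3/2}$, using that $t+\ve'-\underline{s}\ge \ve'>0$ keeps all derivatives uniformly bounded. Either way one gets $(\mathrm{Ms})=O(1/N)$.

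One cosmetic slip: in your displayed bound the remainder should carry $\|\phi'''_{\underline{s}}\|_{\infty}^{2}$ (as in your Taylor step), not $\|\phi''_{\underline{s}}\|_{\infty}^{2}$; this is harmless since both are bounded by the same $C(\ve')$.
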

\begin{proof}
The proof is the same as the proof of Lemma \ref{lem3}.
\end{proof}


\begin{lem}\label{lem10}
Suppose $\ve>\ve'>0$, $t\geq \eta>0$, and $0<\delta<\frac{1}{2}$. Then, for any $x\in \mathbb{R}$
\begin{align}
\liminf_{N\to\infty}\text{(\ref{Me})}\leq C(\delta)\beta^2(t\vee 1)^2(t+\ve')^{-\frac{1}{2}-\delta}(\ve-\ve')^{\delta}.\notag
\end{align}
\end{lem}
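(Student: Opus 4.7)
The plan is to mimic the proof of Lemma \ref{lem8} for (Mb), but replacing the first-moment identity in Lemma \ref{lem4} with a second-moment bound on the local population counts $B^{(N)}_{\underline{s}N,z}$. First I would use the explicit formula for $\langle M^{(e,N)}\rangle_{\underline{t}}$ derived earlier in the excerpt to write
\begin{align*}
\text{(\ref{Me})} = \beta^{2}\bigl(1+\mathcal{O}(N^{-1/2})\bigr)E\!\left[\sum_{\underline{s}\leq \underline{t}}\sum_{z\in\mathbb{Z}}\frac{\bigl(\psi^{x}_{t+\ve-\underline{s}}-\psi^{x}_{t+\ve'-\underline{s}}\bigr)^{2}\!\left(\tfrac{z}{\sqrt{N}}\right)}{N}\cdot\frac{\bigl(B^{(N)}_{\underline{s}N,z}\bigr)^{2}}{N^{3/2}}\right].
\end{align*}
The central ingredient is then the moment estimate on $E[(B^{(N)}_{sN,z})^{2}]/N^{3/2}$ provided by Corollary \ref{cor1}, which, as already exploited in the proof of Lemma \ref{lem12}, controls this quantity by $C(s\vee 1)^{2}\bigl(sN\vee 1\bigr)^{-1/2}\,N\cdot P_{Y}(Y_{sN}=z)$ (essentially the two-particle collision estimate combined with Lemma \ref{lem13}). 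This is what yields the extra $(t+\ve')^{-1/2}$ and $(t\vee 1)^{2}$ factors compared with the (Mb) bound.

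Next, I would convert the resulting Riemann sum over $(\underline{s},z)$ into an integral using the local CLT and the invariance principle (as at the end of the proof of Lemma \ref{lem8}), arriving at an upper bound of the form
\begin{align*}
\liminf_{N\to\infty}\text{(\ref{Me})} \;\leq\; C\beta^{2}(t\vee 1)^{2}\int_{0}^{t}\frac{1}{\sqrt{s}}\int_{\mathbb{R}}\bigl(\psi^{x}_{t+\ve-s}(y)-\psi^{x}_{t+\ve'-s}(y)\bigr)^{2}\psi^{0}_{s}(y)\,dy\,ds.
\end{align*}
Then I would apply the heat-kernel difference inequality \eqref{normal} with $p=2$ and the given $\delta\in(0,1/2)$ to pull out the factor $\bigl((\ve-\ve')/(t+\ve'-s)^{3/2}\bigr)^{\delta}$ and split into two Gaussian integrals $\int \psi^{x}_{t+\ve-s}(y)^{2-\delta}\psi^{0}_{s}(y)\,dy$ and the analogous one with $\ve'$. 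Each of these is computed using $\int \psi^{x}_{a}(y)\psi^{0}_{b}(y)\,dy = \psi^{0}_{a+b}(x)$ after rescaling powers, producing a factor bounded by $C_{\delta}(t+\ve'-s)^{(\delta-1)/2}(t+\ve')^{-1/2}$.

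Finally I would carry out the $s$-integration: the product of $s^{-1/2}$ with $(t+\ve'-s)^{-3\delta/2+(\delta-1)/2}=(t+\ve'-s)^{-1/2-\delta}$ is integrable on $[0,t]$ and gives a factor of order $(t+\ve')^{-\delta}$, which, combined with the prefactor $(t+\ve')^{-1/2}$ and the $(\ve-\ve')^{\delta}$ already extracted, produces exactly $C(\delta)\beta^{2}(t\vee 1)^{2}(t+\ve')^{-1/2-\delta}(\ve-\ve')^{\delta}$. The main obstacle is step one: one must get the correct space-localized second-moment bound $E[(B^{(N)}_{sN,z})^{2}]/N^{3/2}\lesssim (s\vee 1)^{2}s^{-1/2}P_{Y}(Y_{sN}=z)$ rather than just a total-mass bound, since the test function $(\psi^{x}_{t+\ve-s}-\psi^{x}_{t+\ve'-s})^{2}$ must be pulled inside as a function of $z/\sqrt{N}$; this is where the two-particle Feynman--Kac representation (via Corollary \ref{cor1}) and the local CLT are essential. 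Everything after that is a direct Gaussian calculation.
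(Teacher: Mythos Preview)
Your proposal is correct and follows essentially the same route as the paper: bound $E[(B^{(N)}_{\underline{s}N,z})^{2}]/N^{3/2}$ by $C(\underline{s}\vee 1)^{2}\underline{s}^{-1/2}P_Y(Y_{\underline{s}N}=z)$ via the two-particle estimate (the paper cites Lemma~\ref{lemma14} and Lemma~\ref{lem13} rather than Corollary~\ref{cor1}, but the content is the same), then apply \eqref{normal} and the Gaussian convolution identity, and finally evaluate the Beta-type integral $\int_0^{t}s^{-1/2}(t+\ve'-s)^{-1/2-\delta}\,ds$. The one technical point you gloss over is the passage from the discrete $(\underline{s},z)$-sum to the continuum integral $\int_0^t s^{-1/2}\int_{\mathbb{R}}(\cdot)\,\psi^0_s(y)\,dy\,ds$: the paper handles this by introducing a cutoff $0<\eta'<t$, using the invariance principle only on $[\eta',t]$ (where the CLT approximation is uniform), bounding the piece on $[0,\eta']$ crudely by $\sup_y(\psi^x_{t+\ve-s}-\psi^x_{t+\ve'-s})^2$, and sending $\eta'\to 0$ at the end. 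This is a minor point since the test function is uniformly bounded and $s^{-1/2}$ is integrable, but it is worth making explicit.
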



\begin{proof}
By Lemma \ref{lemma14}, we have that 
\begin{align}
&\text{(\ref{Me})}\notag\\
&\leq \beta^2E\left[	\sum_{\underline{s}\leq \underline{t}}\sum_{z\in\mathbb{Z}}\frac{\left(		\psi^{x}_{t+\ve-\underline{s}}\left(	\frac{z}{N^{\frac{1}{2}}}		\right)-\psi^{x}_{t+\ve'-\underline{s}}\left(	\frac{z}{N^{\frac{1}{2}}}		\right)		\right)^{2}}{N}		\frac{\left(B^{(N)}_{\underline{s}N,z}\right)^{2}}{N^{\frac{3}{2}}}	\right]\notag\\
&\leq \beta^2\sum_{\underline{s}\leq \underline{t}}\sum_{z\in\mathbb{Z}}\frac{C(\underline{s}\vee 1)^2}{N\sqrt{\underline{s}}}		\left(	\psi_{t+\ve-\underline{s}}^{x}\left(		\frac{z}{N^{\frac{1}{2}}}	\right)	-	\psi_{t+\ve'-\underline{s}}^{x}\left(		\frac{z}{N^{\frac{1}{2}}}	\right)	
\right)^{2}	P\left(		Y_{\underline{s}N}=z		\right)\notag\\
&\leq C\beta^2(t\vee 1)^2\int_{0}^{t}\sum_{z\in\mathbb{Z}}\frac{1}{\sqrt{s}}\left(\psi_{t+\ve-{s}}^{x}\left(\frac{z}{N^{\frac{1}{2}}}\right)-\psi^{x}_{t+\ve'-{s}}\left(\frac{z}{N^{\frac{1}{2}}}\right)\right)^{2}P\left(Y_{\underline{s}N}=z\right)ds,\notag
\end{align}
where we have used Lemma \ref{lem13} in the third inequality. 
Let $0<\eta'<t$. Then, we obtain by the similar argument in the proof of Lemma \ref{lem8} that
\begin{align}
&\liminf_{N\to \infty}(\text{\ref{Mb}})\notag\\
&\leq C\beta^2(t\vee 1)^2\left(\int_{\eta'}^{t}\int_{\mathbb{R}}\frac{1}{\sqrt{s}}\left(\psi_{t+\ve-{s}}^{x}\left(y\right)-\psi^{x}_{t+\ve'-{s}}\left(y\right)\right)^{2}\psi^0_{s}(y)dyds\right.\notag\\
&\hspace{7em}+\left.\int_{0}^{\eta'}\frac{\sup_{y}\left(\psi^x_{t+\ve-s}(y)-\psi^x_{t+\ve'-s}(y)\right)^2}{\sqrt{s}}ds\right)\notag\\
&\leq C\beta^2(t\vee 1)^2\int_{\eta'}^{t}\int_{\mathbb{R}}\left(\frac{\ve-\ve'}{(t+\ve'-s)^{\frac{3}{2}}}\right)^{\delta}\left(		\left(\psi^{x}_{t+\ve-s}(y)\right)	^{2-\delta}+\left(\psi^{x}_{t+\ve'-s}(y)\right)^{2-\delta}	\right)\frac{\psi^{0}_{s}(y)}{\sqrt{s}}dyds\notag\\
&+C\beta^2(t\vee 1)^2\int_0^{\eta'}\left(\frac{\ve-\ve'}{(t+\ve'-s)^{\frac{3}{2}}}\right)^{\delta}s^{-\frac{1}{2}}\left((t+\ve-s)^{\frac{2-\delta}{2}}+(t+\ve'-s)^{\frac{2-\delta}{2}}\right)ds\notag\\
&\leq C(\delta)\beta^2\frac{(t\vee 1)^2}{(t+\ve')^{\frac{1}{2}}}(\ve-\ve')^{\delta}\int_{0}^{t+\ve'}s^{-\frac{1}{2}}(t+\ve'-s)^{-\frac{1}{2}-\delta}ds\notag\\
&+C(\delta)\beta^2(t\vee 1)^2(\ve-\ve')^{\delta}\int_0^{\eta'}s^{-\frac{1}{2}}\frac{(t+\ve'-s)^{\frac{2-\delta}{2}}+(t+\ve-s)^{\frac{2-\delta}{2}}}{(t+\ve'-s)^{\delta}}ds\notag\\
&\leq C(\delta)\beta^2(t\vee 1)^2(\ve-\ve')^{\delta}\left(\left(t+\ve'\right)^{-\frac{1}{2}-\delta}B\left(\frac{1}{2},\frac{1}{2}-\delta\right)+\eta'^{\frac{1}{2}}(t+\ve)^{\frac{2-\delta}{2}}(t+\ve'-\eta')^{-\delta}\right).\notag
\end{align}
Since $\eta'>0$ is arbitrary, we have that \begin{align*}
\liminf_{N_k\to\infty}\text{(\ref{Me})}\leq C(\delta)\beta^2(t\vee 1)^2(t+\ve')^{-\frac{1}{2}-\delta}(\ve-\ve')^{\delta}.
\end{align*}
\end{proof}

Thus, we have that \begin{align}
\lim_{\ve,\ve' \to 0}\sup_{x\in\mathbb{R},t\geq \eta}E\left[\left(X_{t}(\psi_{\ve}^{x})-X_{t}(\psi_{\ve'}^{x})\right)^{2}\right]=0,\ \ \text{for any }\eta>0.\notag
\end{align}
By Skorohod representation theorem, we may assume that $X^{(N_{k})}$ and $X$ are defined on a common probability space and $X^{(N_{k})}\to X$ in $D([0,\infty),{\cal M}_{F}(\mathbb{R}))$ a.s.. Then, from the above arguments, we have that \begin{align}
X_{t}(\psi^{x}_{\ve})=X_{0}(\psi^{x}_{t+\ve})+\tilde{M}_{t}(\psi^{x}_{t+\ve-\cdot})\label{limit}
\end{align}
for a certain continuous $L^{2}$-bounded martingale $\tilde{M}_{t}(\psi^{x}_{t+\ve-\cdot})$, where the martingale property of $\tilde{M}_t\left(	\psi^{x}_{t+\ve-\cdot}		\right)$ is obtained by the same argument as the proof  of Lemma \ref{lem0}. Also, we take $L^{2}$-limit in (\ref{limit}) as $\ve\to 0$ and choose $\ve_{n}\to 0$ so that for any $t$ and $x\in \mathbb{R}$,\begin{align}
\lim_{n\to\infty} X_{t}(\psi^{x}_{\ve_{n}})=X_{0}(\psi^{x}_{t})+\tilde{M}_{t}(\psi^{x}_{t-\cdot})\ \ \text{a.s. and in }L^{2}.\label{dens}
\end{align}
We define $u(t,x)=\lim_{\ve_n\to 0} X_{t}(\psi^{x}_{\ve_{n}})$ for all $t>0$, $x\in \mathbb{R}$. 
Standard differential theory shows that for each $t>0$ with probability $1$,\begin{align}
X_{t}(dx)=u(t,x)dx+X^{s}_{t}(dx),\notag
\end{align}
where $X^{s}_{t}$ is a random measure such that $X^{s}_{t}(dx) \perp dx$. Also, (\ref{dens}) implies that \begin{align}
E\left[\int_{\mathbb{R}}u(t,x)dx\right]=\int_{\mathbb{R}}X_{0}(\psi^{x}_{t})dx=1=E\left[X_{t}(1)\right].\notag
\end{align}
Thus, $E\left[X^{s}_{t}(1)\right]=0$ and \begin{align}
X_{t}(dx)=u(t,x)dx,\ \ \text{a.s.\,for all }t>0.\notag
\end{align}

Therefore, we complete the proof of Lemma \ref{lem15} and also of the existence of Theorem \ref{thm1}.

\subsection{Weak uniqueness of the limit point process}
In the end of this section, we will prove the weak uniqueness of the limit point process $X$.

The main idea is to prove the existence of the ``dual process" $\{Y_t:t\geq 0\}$, which is $C_b^+(\mathbb{R})$-valued process independent of $X$ satisfying \begin{align}
E\left[	\exp\left(-\langle		X_t,\phi	\rangle	\right)	\right]=E\left[		\exp\left(	-\langle		X_0,Y_t			\rangle		\right)	\right]\label{dual}
\end{align}
for each $\phi\in C(\mathbb{R})$, where $\langle	\nu,\phi		\rangle=\int_\mathbb{R}\phi(x)\nu(dx)$ for $\nu\in{\cal M}_F(\mathbb{R})$ and $\phi\in C_b(\mathbb{R})$. The reader should be careful not to  confuse the notation of quadratic variation of martingale. Also, we will identify $v\in L^1_+(\mathbb{R})$ as a finite measure on $\mathbb{R}$ by $v(x)dx$.

Then, it is clear that the closure of $C_{rap}^+(\mathbb{R})$ under convergence with bounded pointwisely is the set of nonnegative bounded measurable functions. Thus, it is enough to show (\ref{dual}) for all $\phi\in C^+_{rap}(\mathbb{R})$ from Lemma II.\,5.9 in \cite{Per}.

In our case, the dual process is a solution to the martingale problem:\begin{align}
\begin{cases}
\text{For all }\psi\in D(\Delta),\\
\quad \tilde{Z}_t(\psi)=\langle	Y_t,\psi		\rangle-\langle		Y_0,\psi	\rangle-\frac{\gamma}{2}\int_0^t	\langle Y_s^2,\psi\rangle ds	-\int_0^t\left\langle	Y_s,\frac{1}{2}\Delta \psi		\right\rangle ds\\
\text{is an }{\cal F}_t^Y\text{-continuous square integrable martingale and }\\
\quad \langle		\tilde{Z}(\psi)	\rangle_t=2\beta^2\int_0^t	\langle Y_s^2,\psi^2\rangle ds.	
\end{cases}\label{dual2}
\end{align}

A solution to such martingale problem is a solution of the nonlinear stochastic heat equation:\begin{align}
\frac{\partial }{\partial t}Y_t(x)=\frac{1}{2}\Delta Y_t(x)-\frac{\gamma}{2}Y_t(x)^2+\sqrt{2\beta}Y_t(x)\dot{\tilde{W}}(t,x). \label{nonlinear}
\end{align}
The existence of the nonnegative solution of (\ref{nonlinear}) for the case where $Y_0\in C_{rap}^+(\mathbb{R})$ has been proved in \cite{Nak2}.

We will see that solutions to (\ref{nonlinear}) satisfies (\ref{dual}).

\begin{proof}[Proof of the uniqueness]
Let $X$ be a solution to the martingale problem obtained as  a limit point of $X^{(N)}$ and also, we denote by $X_t(x)$ its density.
Let $Y$ be a solution to the martingale problem (\ref{dual2})  constructed on the same probability space as $\{X,X^{(N)}\}$ and independent of them.

We denote by $\nu^\ve$ the convolution of $\nu \in {\cal M}_F(\mathbb{R})$ and 
 $p_{\ve}(x)=\frac{1}{\sqrt{2\pi\ve}}\exp\left(	-\frac{x^2}{2\ve}\right)$ for $\ve>0$ and $x\in\mathbb{R}$. Then, it is clear that $Y^\ve\in C_{b,+}^2(\mathbb{R})$.
We have by It\^{o}'s lemma that for fixed $u\in[0,t]$\begin{align*}
&\exp\left(	-\left\langle 	X_t,Y_{u}^\ve	\right\rangle			\right)\\
&\hspace{1em}-\int_0^t	\exp\left(	-\left\langle X_s,Y_u^\ve	\right\rangle	\right)\left(\left\langle	X_s,\frac{1}{2}\Delta Y_u^\ve		\right\rangle	-\frac{\gamma}{2} \left\langle X_s,	\left(Y_u^\ve\right)^2\right\rangle -{\beta^2}		\left\langle	X^2_s,\left(Y_u^{\ve}\right)^2		 \right\rangle		\right)	ds
\end{align*}
is an ${\cal F}_t^X$-martingale and\begin{align*}
&\exp\left(	-\left\langle 	X_u,Y_{t}^\ve	\right\rangle			\right)\\
&\hspace{1em}-\int_0^t	\exp\left(	-\left\langle X_u,Y_s^\ve	\right\rangle	\right)\left(\left\langle	X_u,\frac{1}{2}\Delta Y_s^\ve		\right\rangle	-\frac{\gamma}{2} \left\langle X_u^\ve,	Y_s^2\right\rangle -{\beta^2}		\left\langle	\left(X_u^\ve\right)^2, Y_s^2		 \right\rangle		\right)	ds
\end{align*}
is an ${\cal F}_t^Y$-martingale.
We define for $0\leq s,t\leq T<\infty$ and $\ve\geq 0$\begin{align*}
g(t,s,\ve)=E\left[		\exp\left(	-\left\langle 	X_t,Y_s^\ve	\right\rangle				\right)		\right].
\end{align*}
Then, we have that by Fubini's theorem\begin{align*}
&\int_0^T(g(s,0,\ve)-g(0,s,\ve))ds\\
&=\int_0^T\left(g(T-s,s,\ve )-g(0,s,\ve)-g(s,T-s,\ve)+g(s,0,\ve)\right)ds\\
&=E\left[\int_0^T	\int_0^{T-s}\exp\left(	-\left\langle		X_{u},Y_s^\ve		\right\rangle		\right)\left\{	\frac{\gamma}{2}\left\langle	X_{u	},	\left(	\left(Y_s^2\right)^\ve-\left(Y_s^\ve\right)^2		\right)\right\rangle		\right.\right.\\
&\hspace{15em}\left.\left.		+\beta^2\left(\left\langle	\left(X_{u}^\ve\right)^2,Y_s^2			\right\rangle	-\left\langle		X_u,\left(Y_s^\ve\right)^2		\right\rangle	\right)	\right\}duds\right].
\end{align*}
We will show that the right hand side converges to $0$ as $\ve\to 0$. Then, $g(s,0,\ve)\to g(s,0,0)$ and $g(0,s,\ve)\to g(0,s,0)$  as $\ve\to 0$. Also, since $g(s,0,0)$ and $g(0,s,0)$ are continuous at $s=T$, differentiating both sides of the above equation at $s=T$, we get $g(T,0,0)=g(0,T,0)$.
Since $\exp\left(		-\langle  X_u,Y_s^\ve\rangle	\right)\leq 1$, it is enough to show that\begin{align*}
&\int_0^T\int_0^{T-u}E\left[	\left\langle	X_u,	\left|	\left(Y_s^\ve\right)^2-Y_s^2\right|	\right\rangle		\right]dsdu\to 0\\
&\int_0^T\int_0^{T-u}E\left[	\left\langle	X_u,	\left|	\left(Y_s^2\right)^\ve-Y_s^2\right|	\right\rangle		\right]dsdu\to 0\\
&\int_0^T\int_0^{T-u}E\left[		\left\langle	\left|\left(X_{u}^\ve\right)^2-X_u^2\right|,Y_s^2	\right\rangle	\right]dsdu\to 0\\
&\int_0^T\int_0^{T-u}E\left[		\left\langle	X_{u}^2,\left|	\left(Y_s^\ve\right)^2	-Y_s^2	\right|	\right\rangle	\right]dsdu\to 0\text{, as }\ve\to0.
\end{align*}
Now, we will show them.
It is clear that
\begin{align*}
&\int_0^T\int_0^{T-u}E\left[	\left\langle	X_u,	\left|	\left(Y_s^\ve\right)^2-Y_s^2\right|	\right\rangle		\right]dsdu\\
&=\int_0^T\int_0^{T-u}\left\langle	E_X[X_u], E_Y\left[\left|	\left(Y_s^\ve\right)^2-Y_s^2\right|\right]	\right\rangle dsdu\\
&=\int_0^T \left\langle  	E_X[X_u],	\left(\int_0^{T-u}E_Y\left[\left(	Y_s^\ve-Y_s\right)^2\right]du	\right)^{1/2}\left(\int_0^{T-u}E_Y\left[\left(	Y_s^\ve+Y_s\right)^2\right]du	\right)^{1/2}	\right\rangle ds.
\end{align*}
From the proof of  Proposition 3.7 in \cite{Nak2}, we have that\begin{align*}
&E_Y\left[\left(	Y_s^\ve(x)-Y_s(x)\right)^2\right]\\
&\leq \int_\mathbb{R}E_Y\left[\left(Y_s(x+y)-Y_s(x)\right)^2\right]p_{\ve}(y)dy\\
&\leq 	C(\gamma,\beta,T)\int_\mathbb{R} E\left[\left(\int_\mathbb{R}\left(p_s(x+y+z)-p_s(x+z)\right)Y_0(z)dz\right)^2	\right.\\
&\hspace{8em}\left.	+\left(\int_0^s\int_\mathbb{R}\left(p_{s-u}(x+y+z)-p_{s-u}(x+z)\right)Y_u^2(z)dzdu\right)^2\right.\\
&\hspace{8em}\left.+\int_0^s\int_\mathbb{R}\left(p_{s-u}(x+y+z)-p_{s-u}(x+z)\right)^2Y^2_u(z)dzdu			\right]p_\ve(y)dy	.
\end{align*}

Then, it follows from H\"{o}lder's inequality that \begin{align*}
&\int_0^{T-u}\left(\int_\mathbb{R}\left(p_s(x+y+z)-p_s(x+z)\right)Y_0(z)dz\right)^2ds\\
&\hspace{3em}\leq \int_0^{T-u}\left(\int_\mathbb{R}\left(p_s(x+y+z)-p_s(x+z)\right)^2dz\int_\mathbb{R}Y_0^2(z)dz\right)ds\\
&\hspace{3em}\leq  C(Y_0)|y|,\\
&\int_0^{T-u}E\left[\left(\int_0^s\int_\mathbb{R}\left(p_{s-r}(x+y+z)-p_{s-r}(x+z)\right)Y_r^2(z)dzdr\right)^2\right]ds\\
&\hspace{3em}\leq \int_0^{T-u}\left(	E\left[		\int_0^s\int_\mathbb{R}Y_r^2	(z)dzdr\right]\right)\\
&\hspace{7em}\times\left(\int_0^s\int_\mathbb{R}	\left(p_{s-r}(x+y+z)-p_{s-r}(x+z)\right)^2dzdr\right)ds\\
&\hspace{3em}\leq C(Y_0,T)|y|,
\intertext{and}
&\int_0^{T-u}	\int_0^s\int_\mathbb{R}\left(p_{s-r}(x+y+z)-p_{s-r}(x+z)\right)^2E\left[Y_r^2(z)\right]dzduds\\
&\leq \int_0^{T-u}\left(	\int_0^s\int_\mathbb{R}\left(p_{s-r}(x+y+z)-p_{s-r}(x+z)\right)^2dzdr\right)^{1/2}\\
&\hspace{5em}		\times \left(\int_0^{s}\int_\mathbb{R}\left(p_{s-r}(x+y+z)-p_{s-r}(x+z)\right)^2E\left[Y_r^2(z)\right]dzdu\right)^{1/2}ds	\\
&\leq C\int_0^{T-u}|y|^{1/2}\left(\int_0^s	\int_{\mathbb{R}}\left(p_{s-r}(x+y+z)^{8/3}+p_{s-r}(x+z)^{8/3}	\right)dzdr	\right)^{3/8}\\
&\hspace{17em\times }\left(	\int_0^s\int_\mathbb{R} E\left[	Y_r^{2}(z)	\right]	dzdr	\right)^{1/8}ds\\
&\leq C(Y_0,T)|y|^{1/2}.
\end{align*}
Thus, we have that \begin{align*}
\left(\int_0^{T-u}E_Y\left[\left(	Y_s^\ve-Y_s\right)^2\right]du	\right)^{1/2}\leq C(Y_0,T)(|\ve|^{1/4}+|\ve|^{1/2})^{1/2}.
\end{align*}

Also, we have by the same argument that\begin{align*}
E_Y\left[(Y^\ve_s(x)+Y_s(x))^2			\right]<C(Y_0,T).
\end{align*}
Therefore, we can show that \begin{align*}
\int_0^T\int_0^{T-u}E\left[	\left\langle	X_u,	\left|	\left(Y_s^\ve\right)^2-Y_s^2\right|	\right\rangle		\right]dsdu\to 0.
\end{align*}
Also,  the similar argument implies that \begin{align*}
\int_0^T\int_0^{T-u}E\left[	\left\langle	X_u,	\left|	\left(Y_s^2\right)^\ve-Y_s^2\right|	\right\rangle		\right]dsdu\to 0
\intertext{and }
\int_0^T\int_0^{T-u}E\left[		\left\langle	X_{u}^2,\left|	\left(Y_s^\ve\right)^2	-Y_s^2	\right|	\right\rangle	\right]dsdu\to 0,
\end{align*}
as $\ve \to 0$, where we have used from that \begin{align*}
\int_0^TE\left[	\left\langle X_s^2,1\right\rangle			\right]ds\leq \varliminf_{N\to\infty}E\left[	\left\langle	X^{(N)}(1)		\right\rangle_T		\right]<\infty.
\end{align*}
We will complete the proof by showing that \begin{align*}
\int_0^T\int_0^{T-u}E\left[		\left\langle	\left|\left(X_{u}^\ve\right)^2-X_u^2\right|,Y_s^2	\right\rangle	\right]dsdu\to 0\text{, as }\ve\to0.
\end{align*}
This is true when the case $X_0$ has a rapidly decreasing continuous density.

We have  by Fatou's lemma that for any $u>0$\begin{align*}
&\int_0^{T-u}E\left[		\left\langle	\left|\left(X_{u}^\ve\right)^2-X_u^2\right|,Y_s^2	\right\rangle	\right]ds\\
&\leq \left(\int_0^{T-u}E\left[		\int_\mathbb{R}			\left|X_{u}^\ve+X_u\right|^2Y_s^2(x)	dx	\right]ds\right)^{1/2}\\
&\hspace{2em}\times \left(\int_0^{T-u}E\left[		\int_\mathbb{R}			\left|X_{u}^\ve-X_u\right|^2Y_s^2(x)	dx	\right]ds\right)^{1/2}\\
&\leq\liminf_{\ve'\to 0} \left(\int_0^{T-u}E\left[		\int_\mathbb{R}			\left|X_{u}^\ve+X_u^{\ve'}\right|^2Y_s^2(x)	dx	\right]ds\right)^{1/2}\\
&\hspace{2em}\times \liminf_{\ve'\to 0}\left(\int_0^{T-u}E\left[		\int_\mathbb{R}			\left|X_{u}^\ve-X_u^{\ve'}\right|^2Y_s^2	(x)dx	\right]ds\right)^{1/2}.
\end{align*}
Also, it follows from the construction of $X$ that by Fubini's theorem and Fatou's lemma\begin{align}
&\int_0^{T-u}E\left[		\int_\mathbb{R}			\left|X_{u}^\ve+X_u^{\ve'}\right|^2Y_s^2(x)	dx	\right]ds\notag\\
&=\int_0^{T-u}E\left[		\int_\mathbb{R}			\left|X_{u}(p_{\ve}(x+\cdot)+p_{\ve'}(x+\cdot))\right|^2Y_s^2(x)	dx	\right]ds\notag\\
&\leq \int_0^{T-u}\int_\mathbb{R}\liminf_{N\to \infty} E\left[	\left|X^{(N)}_{u}(p_{\ve}(x+\cdot)+p_{\ve'}(x+\cdot))\right|^2Y_s^2(x)		\right]dxds\notag\\
&\leq  \int_0^{T-u}\int_\mathbb{R}\liminf_{N\to \infty} E\left[	\left|X^{(N)}_{0}(p_{u+{\ve}}(x+\cdot)+p_{u+\ve'}(x+\cdot))\right|^2Y_s^2(x)		\right]dxds\label{First}\\
&\hspace{0em}+\int_0^{T-u}\int_\mathbb{R}\liminf_{N\to \infty} \gamma E\left[	\int_0^uX^{(N)}_{r}\left((p_{u-r+{\ve}}(x+\cdot)+p_{u-r+\ve'}(x+\cdot))^2\right)Y_s^2(x)	dr	\right]dxds\label{second}\\
&\hspace{0em}+\int_0^{T-u}\int_\mathbb{R}\liminf_{N\to \infty} 2\beta^2E\left[	\int_0^{u}\int_\mathbb{R}\left(u^{(N)}(r,z)\right)^2	\left(p_{u-r+\ve}(x+z)+p_{u-r+\ve'}(x+z)\right)^2Y_s^2(x)dz dr	\right]dxds.\label{third}
\end{align}
We will estimate each term.
\begin{align*}
&(\text{\ref{First}})\\
&= \int_0^{T-u}\int_\mathbb{R}\liminf_{N\to \infty} E\left[	\left|X^{(N)}_{0}(p_{u+{\ve}}(x+\cdot)+p_{u+\ve'}(x+\cdot))\right|^2Y_s^2(x)		\right]dxds\\
&\leq \int_0^{T-u}\int_\mathbb{R}\liminf_{N\to \infty} E\left[	X^{(N)}_{0}(1)X_0^{(N)}\left(\left(p_{u+{\ve}}(x+\cdot)+p_{u+\ve'}(x+\cdot)\right)^2\right)Y_s^2(x)		\right]dxds\\
&=\int_0^{T-u}\int_\mathbb{R}X_0(1)X_0\left(\left(p_{u+{\ve}}(x+\cdot)+p_{u+\ve'}(x+\cdot)\right)^2E\left[	Y_s^2(x)		\right]\right)dxds\\
&=\int_0^{T-u}X_0(1)X_0\left(\int_\mathbb{R}\left(p_{u+{\ve}}(x+\cdot)+p_{u+\ve'}(x+\cdot)\right)^2E\left[	Y_s^2(x)		\right]dx\right)ds\\
&\leq \int_0^{T-u}X_0(1)X_0\left(\left(\int_\mathbb{R}\left(p_{u+{\ve}}(x+\cdot)+p_{u+\ve'}(x+\cdot)\right)^{8/3}dx\right)^{3/4}\left(\int_\mathbb{R}E\left[	Y_s^8(x)		\right]dx\right)^{1/4}\right)ds\\
&\leq C(T,X_0,Y_0)\left((u+\ve)^{-5/8}+(u+\ve')^{-5/8}\right).
\end{align*}
It follows by the similar argument to the proof of Lemma \ref{lem8} that
\begin{align*}
&(\text{\ref{second}})\\
&= \gamma \int_0^{T-u}\int_\mathbb{R} E\left[Y_s^2(x)\right]\liminf_{N\to\infty} E\left[	\int_0^uX^{(N)}_{r}\left((p_{u-r+{\ve}}(x+\cdot)+p_{u-r+\ve'}(x+\cdot))^2\right)	dr	\right]dxds\\
&\leq\gamma \int_0^{T-u}\int_\mathbb{R} \left(E\left[Y_s^2(x)\right]    \int_0^{u}\int_\mathbb{R}p_{r}(y)\left(p_{u-r+{\ve}}(x+y)^2+p_{u-r+\ve'}(x+y)^2\right)dy	dr\right)	dxds\\
&\leq C(T,X_0,Y_0).
\end{align*}
Also, it follows from the similar argument to the proof of Lemma \ref{lem10} that\begin{align*}
&(\text{\ref{third}})\\
&=2\beta^2 \int_0^{T-u}\int_\mathbb{R}E\left[Y_s^2(x)\right]\liminf_{N\to \infty} E\left[	\int_0^{u}\int_\mathbb{R}\left(u^{(N)}(r,z)\right)^2	\left(p_{u-r+\ve}(x+z)+p_{u-r+\ve'}(x+z)\right)^2dz dr	\right]dxds\\
&\leq C(T,\beta,X_0)\int_0^{T-u}\int_\mathbb{R}E\left[Y_s^2(x)\right] 	\int_0^{u}\int_\mathbb{R}\frac{1}{\sqrt{r}}p_r(z)	\left(p_{u-r+\ve}(x+z)^2+p_{u-r+\ve'}(x+z)^2\right)dz dr	dxds\\
&\leq C(T,\beta,X_0)\int_0^{T-u}\int_\mathbb{R}\left((u+\ve)^{-1/2}+(u+\ve')^{-1/2}\right)E\left[Y_s^2(x)\right]dxds\\
&\leq C(T,\beta^2,X_0,Y_0)\left((u+\ve)^{-1/2}+(u+\ve')^{-1/2}\right).
\end{align*}
Thus, we have that \begin{align*}
&\left(\liminf_{\ve'\to0}\int_0^{T-u}E\left[		\int_\mathbb{R}			\left|X_{u}^\ve+X_u^{\ve'}\right|^2Y_s^2(x)	dx	\right]ds\right)^{1/2}\\
&\leq C(T,X_0,Y_0,\gamma,\beta)\left(1+u^{-1/2}+u^{-5/8}\right)^{1/2}.
\end{align*}
Also, the similar argument does hold for the term $\dis  \int_0^{T-u}E\left[		\int_\mathbb{R}			\left|X_{u}^\ve-X_u^{\ve'}\right|^2Y_s^2(x)	dx	\right]ds$.
Actually, we have from Lemma \ref{lem8} and Lemma \ref{lem10} and by (\ref{normal}) that for any $0< \delta< \frac{1}{2}$\begin{align*}
 &\int_0^{T-u}E\left[		\int_\mathbb{R}			\left|X_{u}^\ve-X_u^{\ve'}\right|^2Y_s^2(x)	dx	\right]ds\\
 &\leq  \int_0^{T-u}\int_\mathbb{R}\liminf_{N\to \infty} E\left[	\left|X^{(N)}_{0}(p_{u+{\ve}}(x+\cdot)-p_{u+\ve'}(x+\cdot))\right|^2Y_s^2(x)		\right]dxds\\
&\hspace{0em}+\int_0^{T-u}\int_\mathbb{R}\liminf_{N\to \infty} \gamma E\left[	\int_0^uX^{(N)}_{r}\left((p_{u-r+{\ve}}(x+\cdot)-p_{u-r+\ve'}(x+\cdot))^2\right)Y_s^2(x)	dr	\right]dxds\\
&\hspace{0em}+\int_0^{T-u}\int_\mathbb{R}\liminf_{N\to \infty} 2\beta^2E\left[	\int_0^{u}\int_\mathbb{R}\left(u^{(N)}(r,z)\right)^2	\left(p_{u-r+\ve}(x+z)-p_{u-r+\ve'}(x+z)\right)^2Y_s^2(x)dz dr	\right]dxds\\
&\leq C(T,\delta,X_0,Y_0)|\ve-\ve'|^{\delta}\left((u+\ve)^{-5/8-29\delta/8}+(u+\ve')^{-5/8-29\delta/8}\right)\\
&+C(T,\gamma,\delta,X_0,Y_0)|\ve-\ve'|^{\delta}(u+\ve)^{-\delta}\\
&+C(T,\beta,\delta,X_0,Y_0)|\ve-\ve'|^{\delta}(u+\ve)^{-1/2-\delta}+(u+\ve')^{-1/2-\delta}.
\end{align*}
Therefore, by taking $0<  \delta<\frac{1}{2}$ small enough, we can obtain  that\begin{align*}
\int_0^T\int_0^{T-u}E\left[		\left\langle	\left|\left(X_{u}^\ve\right)^2-X_u^2\right|,Y_s^2	\right\rangle	\right]dsdu\leq C(T,\beta,\gamma,\delta,X_0,Y_0)\ve^{\delta/2}.
\end{align*}
Thus, we have completed the proof.

\end{proof}



\section{Proof of some facts}\label{4}
This section is devoted to the proof of some lemmas used in section \ref{3}.

\begin{lem}\label{lem13}
For any $\beta>0$ and $K>0$, we have that \begin{align}
\sup_{N}E_{Y^1Y^2}\left[		\left(	1+\frac{\beta^2}{N^{\frac{1}{2}}}			\right)^{\sharp \left\{	1\leq i\leq \left\lfloor KN\right\rfloor:Y^1_i=Y^2	_i						\right\}}			\right]<\infty,\notag
\end{align}
where $Y^{1}_{n},$ $Y^{2}_n$ are independent simple random walks on $\mathbb{Z}$.
Also, \begin{align}
&E_{Y^1Y^2}\left[			\left(					1+\frac{\beta^2}{N^{\frac{1}{2}}}			\right)^{\sharp \left\{			1\leq i\leq \left\lfloor KN\right\rfloor:Y^1_{i}=Y^2_i				\right\}}	:Y^1_{\left\lfloor KN\right\rfloor}=x,Y^2_{\left\lfloor KN\right\rfloor}=y				\right]\notag\\
&\hspace{5em}\leq \frac{C}{K^{\frac{1}{2}}N^{\frac{1}{2}}}\left(P_{Y^1}\left(Y^1_{\left\lfloor KN\right\rfloor}=x\right)	\wedge P_{Y^1}\left(		Y^1_{\left\lfloor KN\right\rfloor}=y		\right)		\right).\notag
\end{align}
\end{lem}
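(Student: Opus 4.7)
My plan is to expand the exponential-like weight multiplicatively and then estimate the resulting moment sum via the local central limit theorem and a Dirichlet-integral bound. Writing $\lambda = \beta^2/\sqrt{N}$, $L_n = \#\{1\leq i\leq n : Y^1_i = Y^2_i\}$, and $n = \lfloor KN\rfloor$, I would start from
\begin{align*}
(1+\lambda)^{L_n} = \prod_{i=1}^n \bigl(1+\lambda\,\mathbbm{1}\{Y^1_i=Y^2_i\}\bigr) = \sum_{k=0}^{n}\lambda^k\!\!\sum_{0<i_1<\cdots<i_k\leq n}\prod_{j=1}^k \mathbbm{1}\{Y^1_{i_j}=Y^2_{i_j}\}.
\end{align*}
Taking expectations, using independence of $Y^1$ and $Y^2$, the Markov property, and the local CLT bound $p_m(z):=P(Y_m=z)\leq C/\sqrt{m\vee 1}$, one meeting factor contributes at most $C/\sqrt{i_j-i_{j-1}}$ while its partner can be left free. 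The resulting $k$-fold sum is comparable to the Dirichlet-type integral $\int_{s_j>0,\sum s_j\leq n}\prod_j s_j^{-1/2}\,ds$, which evaluates to a constant times $n^{k/2}/\Gamma(k/2+1)$. This yields
\begin{align*}
E[(1+\lambda)^{L_n}]\leq \sum_{k\geq 0}\frac{(C\lambda\sqrt{n})^k}{\Gamma(k/2+1)},
\end{align*}
and since $\lambda\sqrt{n}\leq \beta^2\sqrt{K}$ is bounded uniformly in $N$, the series converges, giving the first claim.

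For the second inequality, the same expansion yields
\begin{align*}
P\bigl(Y^1_{i_j}=Y^2_{i_j}\ \forall j,\ Y^1_n=x,\ Y^2_n=y\bigr) = \sum_{z_1,\ldots,z_k}\prod_{j=1}^k p_{i_j-i_{j-1}}(z_j-z_{j-1})^2\cdot p_{n-i_k}(x-z_k)p_{n-i_k}(y-z_k),
\end{align*}
with $z_0=0$. The key trick is to use the $Y^2$-copy of each factor to extract a uniform $C/\sqrt{\cdot}$ bound via the local CLT, while keeping the $Y^1$-copy so that the Chapman--Kolmogorov identity collapses $\sum_{z_1,\ldots,z_k}\prod p_{i_j-i_{j-1}}(z_j-z_{j-1})\,p_{n-i_k}(x-z_k) = p_n(x)$. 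By symmetry we may put $p_n(x)$ or $p_n(y)$ into the role of the kept factor, so we get
\begin{align*}
P(\cdots,Y^1_n=x,Y^2_n=y)\leq \bigl(p_n(x)\wedge p_n(y)\bigr)\cdot C^{k+1}(n-i_k)^{-1/2}\prod_{j=1}^k (i_j-i_{j-1})^{-1/2}.
\end{align*}

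Summing over the $i_j$'s via the same Dirichlet bound now with $k+1$ factors of $(\cdot)^{-1/2}$ summing to $n$, the combinatorial factor is of order $n^{(k-1)/2}/\Gamma((k+1)/2)$. This gives
\begin{align*}
E\bigl[(1+\lambda)^{L_n};Y^1_n=x,Y^2_n=y\bigr]\leq \frac{C}{\sqrt{n}}\bigl(p_n(x)\wedge p_n(y)\bigr)\sum_{k\geq 0}\frac{(C'\lambda\sqrt{n})^k}{\Gamma((k+1)/2)},
\end{align*}
and the series again converges uniformly in $N$ since $\lambda\sqrt{n}\leq \beta^2\sqrt{K}$. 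Because $n=\lfloor KN\rfloor\geq KN/2$ for $N$ large, $1/\sqrt{n}\leq \sqrt{2}/\sqrt{KN}$, which yields the desired $C/(K^{1/2}N^{1/2})$ prefactor. The main technical obstacle is the bookkeeping in the second bound: one must carefully pair each squared transition probability into one ``free'' factor (for the convolution identity) and one ``bounded'' factor (for the uniform $C/\sqrt{m}$ estimate), while also treating the endpoint interval $n-i_k$ separately, with minor parity and $(n-i_k)\vee 1$ conventions to keep the Dirichlet estimate clean.
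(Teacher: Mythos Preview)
Your proposal is correct and follows essentially the same route as the paper: expand $(1+\lambda)^{L_n}$ multiplicatively, bound one copy of each squared transition kernel by $C/\sqrt{m}$ via the local CLT while leaving the other copy to collapse through Chapman--Kolmogorov, and compare the remaining sum over ordered indices to the Dirichlet integral $\int_{0<t_1<\cdots<t_k<K}\prod (t_j-t_{j-1})^{-1/2}\,dt = (\pi K)^{k/2}/\Gamma(k/2+1)$. The one place where the paper is more explicit than your sketch is the conversion of the endpoint sum to an integral in the second estimate: since $(n-i_k)^{-1/2}$ is \emph{increasing} in $i_k$, the naive Riemann comparison used for the other factors does not apply directly, and the paper inserts a short integration-by-parts step to justify $\frac{1}{N}(i_{k}-i_{k-1})^{-1/2}(K-i_k/N)^{-1/2}\leq \tfrac12\int_{i_{k-1}/N}^{i_k/N}(t-i_{k-1}/N)^{-1/2}(K-t)^{-1/2}\,dt$; you should make this step explicit rather than fold it into ``minor conventions.''
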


\begin{proof}
First, we remark that \begin{align}
&E_{Y^1Y^2}\left[\left(		1+\frac{\beta^2}{N^{\frac{1}{2}}}			\right)^{\sharp \left\{		1\leq i\leq \left\lfloor KN\right\rfloor:Y^1_i=Y^2_i		\right\}}\right]=
	E_{Y^1Y^2}\left[	\prod_{k=1}^{\left\lfloor	KN		\right\rfloor}\left(1+\frac{\beta^2}{N^{\frac{1}{2}}}{\bf 1}\left\{	Y^{1}_{k}=Y^2_{k}		\right\}					\right)			\right]		\notag\\
&=		\sum_{k=0}^{\infty}\frac{\beta^{2k}}{N^{\frac{k}{2}}}\sum_{{\bf i}\in D^k(\left\lfloor KN\right\rfloor)}\sum_{{\bf x}\in \mathbb{Z}^{k}}	
	P_{Y^1Y^2}\left(Y^{1}_{i_{j}}=Y^2_{i_{j}}=x_{j}, \text{ for }1\leq j\leq k	\right)\notag\\
&=		\sum_{k=0}^{\infty}\frac{\beta^{2k}}{N^{\frac{k}{2}}}\sum_{{\bf i}\in D^k(\left\lfloor KN\right\rfloor)}\sum_{{\bf x}\in \mathbb{Z}^{k}}	
	P_{Y}\left(Y_{i_{j}}=x_{j}, \text{ for }1\leq j\leq k	\right)^2		,			\label{beta2}
\end{align}
where $D^k(\left\lfloor KN\right\rfloor)$ is the set defined by \begin{align}
D^k(n)=\left\{{\bf i}=(i_{j})_{j=1}^k\in \mathbb{N}^k:	1\leq i_{1}< \cdots<i_{k}\leq n			\right\},\notag
\end{align}
and the summation for $k> \left\lfloor KN\right\rfloor$ is equal to $0$. By the local limit theorem\begin{align}
&\sum_{{\bf i}\in D^k(\left\lfloor KN\right\rfloor)}\sum_{{\bf x}\in \mathbb{Z}^{k}}	
	P_{Y}\left(Y_{i_{j}}=x_{j}, \text{ for }1\leq j\leq k	\right)^2\notag\\
		&\hspace{3em}\leq C^k\sum_{{\bf i}\in D^k(\left\lfloor KN\right\rfloor)}\sum_{{\bf x}\in \mathbb{Z}^k}\prod_{j=1}^{k}\frac{P_{Y}\left(	Y_{i_{j}-i_{j-1}}=x_{j}-x_{j-1}		\right)}{\sqrt{i_{j}-i_{j-1}}}\notag\\
	&\hspace{3em}\leq		C^k\sum_{{\bf i}\in D^k(\left\lfloor KN\right\rfloor)}\prod_{j=1}^k\frac{1}{\sqrt{i_{j}-i_{j-1}}}.\notag
\end{align}
Thus, we have that \begin{align}
\text{(\ref{beta2})}\leq \sum_{k=0}^{\infty}\frac{\beta^{2k}C^k}{N^k}\sum_{{\bf i}\in D^k(\left\lfloor KN\right\rfloor)}\prod_{j=1}^{k}\frac{1}{\sqrt{\frac{i_{j}}{N}-\frac{i_{j-1}}{N}}}.\label{beta3}
\end{align}

Since $\frac{1}{\sqrt{t-s}}$ is decreasing in $t\in (s,\infty)$, it follows that \begin{align*}
\frac{1}{N^{k}}\prod_{j=1}^{k}\frac{1}{\sqrt{\frac{i_j}{N}-\frac{i_{j-1}}{N}}}\leq \prod_{i=1}^{k}\int_{\frac{i_{j-1}}{N}}^{\frac{i_j}{N}}\frac{dt_j}{\sqrt{t_j-\frac{i_{j-1}}{N}}},
\end{align*}
and \begin{align}
(\ref{beta3})&	\leq \sum_{k=0}^{\infty}{\beta^{2k}C^k}\sum_{{\bf i}\in D^k(\lfloor		KN	\rfloor)}\int_{\frac{i_{k-1}}{N}}^{\frac{i_k}{N}}\cdots \int _{0}^{\frac{i_1}{N}}\prod_{j=1}^k	\left(\frac{1}{\sqrt{t_j-\frac{i_{j-1}}{N}}}\right)d{\bf t}		\notag\\
&\leq \sum_{k=0}^{\infty}\beta^{2k}C^k  \int_{0<t_{1}<\cdots <t_{k}<K}\prod_{j=1}^{k}\frac{1}{\sqrt{t_{j}-t_{j-1}}}d{\bf t}\notag\\
&=\sum_{k=0}^{\infty}\frac{\beta^{2k}{C}^k(\pi K)^{\frac{k}{2}}}{\Gamma(\frac{k}{2}+1)}.\notag
\end{align}
Since $\dis \Gamma\left(\frac{k}{2}+1\right)$ is increase faster than $a^k$ for any $a>1$, the summation is finite for any $\beta$.

Also, the similar argument does hold so that \begin{align}
&E_{Y^1Y^2}\left[\left(		1+\frac{\beta^2}{N^{\frac{1}{2}}}				\right)^{\sharp\left\{		1\leq i\leq \left\lfloor KN\right\rfloor:Y^1_i=Y^2_i			\right\}}:Y^1_{\left\lfloor KN\right\rfloor }=x,Y^2_{\left\lfloor KN\right\rfloor }=y					\right]\notag\\
&=\sum_{k=1}^{\infty}\frac{\beta^{2(k-1)}}{N^{\frac{k-1}{2}}}\sum_{{\bf i}\in D^{k-1}(\left\lfloor KN\right\rfloor-1)}\sum_{{\bf x}\in \mathbb{Z}^{k-1}}\left(			\begin{subarray}{l}	P_{Y}\left(Y_{i_{j}}=x_{j},\ \text{for }1\leq j\leq k-1, Y_{\left\lfloor KN\right\rfloor}=x\right)		\\	\times P_{Y}\left(Y_{i_{j}}=x_{j},\ \text{for }1\leq j\leq k-1, Y_{\left\lfloor KN\right\rfloor}=y\right)\end{subarray}\right)\notag\\
&\ \ +\sum_{k=1}^{\infty}\frac{\beta^{2k}}{N^{\frac{k}{2}}}\sum_{{\bf i}\in D^{k-1}(\lfloor KN\rfloor -1)}\sum_{{\bf x}\in \mathbb{Z}^{k-1}}\left(			\begin{subarray}{l}	P_{Y}\left(Y_{i_{j}}=x_{j},\ \text{for }1\leq j\leq k-1, Y_{\left\lfloor KN\right\rfloor}=x\right)		\\	\times P_{Y}\left(Y_{i_{j}}=x_{j},\ \text{for }1\leq j\leq k-1, Y_{\left\lfloor KN\right\rfloor}=y\right)\end{subarray}\right)\notag\\
&\leq \sum_{k=1}^{\infty}2C^{k}\frac{\beta^{2(k-1)}}{N^{\frac{k-1}{2}}}\sum_{{\bf i}\in D^{k-1}(\lfloor KN\rfloor-1)}\left(\prod_{j=1}^{k-1}\frac{1}{\sqrt{i_{j}-i_{j-1}}}\right)\frac{P_{Y}\left(Y^1_{\lfloor KN\rfloor }=x\right)\wedge P_{Y}\left(		Y^1_{\left\lfloor		KN	\right\rfloor}=y		\right)}{\sqrt{\lfloor KN\rfloor-i_{k-1}}}\notag\\
&\leq \sum_{k=1}^{\infty}\frac{C^k\beta^{2(k-1)}}{	N^{\frac{1}{2}}		}\frac{P_{Y}\left(Y^1_{\lfloor KN\rfloor}=x\right)\wedge P_{Y}\left(		Y^1_{\left\lfloor		KN	\right\rfloor}=y		\right)}{N^{k-1}}\sum_{{\bf i}\in D^{k-1}(\lfloor KN\rfloor-1)}\prod_{j-1}^{k-1}\frac{1}{\sqrt{\frac{i_{j}}{N}-\frac{i_{j-1}}{N}}}\frac{1}{\sqrt{K-\frac{i_{k-1}}{N}}}.\label{endpoint}
\end{align}
By the integration by parts, we have that \begin{align*}
\int_{\frac{i_{k-2}}{N}}^{\frac{i_{k-1}}{N}}\frac{1}{\sqrt{t_{k-1}-\frac{i_{k-2}}{N}}\sqrt{K-t_{k-1}}}dt_{k-1}&=\left[	2\frac{\sqrt{t_{k-1}-\frac{i_{k-2}}{N}}}{\sqrt{K-t_{k-1}}}			\right]_{\frac{i_{k-2}}{N}}^{\frac{i_{k-1}}{N}}+\text{positive term}\\
&\geq 2\frac{\sqrt{\frac{i_{k-1}}{N}-\frac{i_{k-2}}{N}}}{\sqrt{K-\frac{i_{k-1}}{N}}} \\
&\geq \frac{2}{N}\frac{1}{\sqrt{\frac{i_{k-1}}{N}-\frac{i_{k-2}}{N}}\sqrt{K-\frac{i_{k-1}}{N}}}.
\end{align*}
Also, we know that \begin{align*}
&\sum_{{\bf i}\in D^{k-1}(\lfloor KN\rfloor-1)}\left(\prod_{j=1}^{k-2}	\int_{\frac{i_{j-1}}{N}}^{\frac{i_{j}}{N}}\frac{1}{\sqrt{t_{j}-\frac{i_{j}}{N}}}dt_{j}\right)		\left(\int_{\frac{i_{k-2}}{N}}^{\frac{i_{k-1}}{N}}\frac{1}{\sqrt{t_{k-1}-\frac{i_{k-2}}{N}}\sqrt{K-t_{k-1}}}dt_{k-1}\right)\\
&\leq \int_{0<t_1<\cdots<t_{k-1}<K}\prod_{j=1}^{k-1}\left(\frac{1}{\sqrt{t_j-t_{j-1}}}\right)\frac{1}{\sqrt{K-t_{k-1}}}d{\bf t}\\
&\leq\frac{\pi^{{\frac{k}{2}}} K^{\frac{k-1}{2}}}{K^{\frac{1}{2}}\Gamma\left(	\frac{k-1}{2}	\right)}.
\end{align*}
Thus, we have that \begin{align*}
&\text{(\ref{endpoint})}\\
&\leq \frac{P_{Y}\left(Y^1_{\lfloor KN\rfloor}=x\right)\wedge P_{Y}\left(		Y^1_{\left\lfloor		KN	\right\rfloor}=y		\right)}{(KN)^{\frac{1}{2}}}\sum_{k=1}^{\infty}\frac{C^k\beta^{2(k-1)}{K^{\frac{k-1}{2}}}}{\Gamma\left(	\frac{k-1}{2}		\right)}.
\end{align*}
Since  the  summation is finite for any $\beta\in \mathbb{R}$, the statement holds.

\end{proof}

The next lemma gives us an upper bound of $p$-th moment of $B_n$ for branching random walks in random environment.

\begin{lem}\label{lemma14}
If $E[m^{(p)}_{n,x}]=K<\infty$ for $p\in \mathbb{N}$ and $E\left[m_{n,x}^{(1)}\right]=1$, then \begin{align} 
E\left[	B^{p}_{n}		\right]		&\leq 		C(p,K)n^{p-1}E_{Y^1\cdots Y^p}\left[E\left[\left(m_{0,0}^{(1)}\right)^{p}\right]^{\sharp\left\{1\leq i\leq n:Y^a_i=Y^b_i,a\not=b\in \{1,\cdots,p\}						\right\}}		\right]\notag
\intertext{and}
E\left[	\prod_{i=1}^{p}B_{n,x_i}						\right]&\notag\\
&\hspace{-3em}\leq C(p,K)n^{p-1}E_{Y^1\cdots Y^p}\left[			E\left[		\left(m_{0,0}^{(1)}\right)^p			\right]	^{\sharp\left\{1\leq i\leq n:Y^a_i=Y^b_i,a\not=b\in \{1,\cdots,p\}						\right\}}: Y^{i}_{n}=x_{i}\text{ for }1\leq i\leq p	\right].\notag
\end{align}
\end{lem}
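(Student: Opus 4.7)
My plan is to prove Lemma~\ref{lemma14} via the standard many-to-$p$ (genealogical) decomposition for branching random walks in random environment. For each ordered $p$-tuple $(\x_1, \ldots, \x_p)$ of particles at generation $n$, I trace the lineages back and record, at each generation $i = 0, \ldots, n$, the partition $\pi_i$ of $\{1, \ldots, p\}$ whose blocks are indices sharing a common ancestor at generation $i$. The initial particle being unique forces $\pi_0 = \{\{1, \ldots, p\}\}$; the partitions refine monotonically down to $\pi_n$, which encodes the coincidence pattern among the $\x_i$'s themselves. Any such monotone sequence has at most $p-1$ refinement events (``coalescence events'' read in reverse time).

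First, I would condition on the environment $\mathcal{H} = \sigma(\{q_{\cdot,\cdot}\})$ and on the ancestral skeleton (coalescence times, ancestor positions, and which sub-lineages merge). Using the conditional independence of offspring counts $V^\x$ given ${\cal G}^{(N)}_{i}$, the conditional expected number of $p$-tuples realizing a given skeleton factorizes: each refinement event in which a block of size $k$ splits into $l$ sub-blocks contributes a factorial moment bounded by $m^{(l)}_{i-1,\text{anc.\ pos.}}\leq m^{(p)}_{i-1,\text{anc.\ pos.}}$, while non-refinement time steps contribute products of $m^{(1)}_{i, \cdot}$ at the current block positions. Averaging over $\mathcal{H}$ and using the i.i.d.\ hypothesis with $E[m^{(p)}]\leq K$, each of the at most $p-1$ factorial-moment factors is bounded by $K$, giving an overall constant $K^{p-1}$. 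The number of tree topologies on $p$ leaves is bounded by $C(p)$, and the choice of ordered coalescence times in $\{0,1,\ldots,n\}$ contributes at most $n^{p-1}$.

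The remaining sum over ancestral walks is then realized as an expectation over $p$ independent simple random walks $Y^1, \ldots, Y^p$ starting at the origin, coupled through the coalescent structure in the obvious way. At each generation $i$ where two walks $Y^a, Y^b$ occupy the same site (whether or not they currently lie in the same genealogical block), the environment at that site is shared, producing a joint moment of $m^{(\cdot)}_{i, y}$'s; by H\"{o}lder's inequality combined with $E[m^{(p)}]\leq K$, this coupling factor can be uniformly bounded by $E[(m^{(1)}_{0,0})^p]$ per coincidence generation, with the residual constants absorbed into $C(p,K)$. Assembling the ingredients yields the stated bound for $E[B_n^p]$; the bound for $E[\prod_i B_{n, x_i}]$ follows by constraining $Y^i_n = x_i$ in the final walk expectation. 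The main obstacle will be the careful bookkeeping in this last step: showing that each generation where any pair of walks meets contributes a factor of at most $E[(m^{(1)})^p]$ (rather than a more intricate joint moment of higher factorial moments), and checking that the constants arising from refinement-event factors and tree-topology counting can genuinely be absorbed into a single $C(p,K) n^{p-1}$.
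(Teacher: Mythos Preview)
Your proposal is correct and follows essentially the same route as the paper: both arguments expand $B_n^p$ via a many-to-$p$ formula, track the monotone partition process of the $p$ lineages (the paper calls it the set process $S(m)$), bound the at most $p-1$ splitting events by $C(p)K$ each and the choice of their times by $n^{p-1}$, and handle the non-splitting generations by H\"older to extract the factor $E[(m^{(1)}_{0,0})^p]$ at each walk coincidence. The only cosmetic difference is that the paper encodes the genealogy through an auxiliary Markov chain ${\bf Y}=(Y,\mathbb{Y})$ on $\mathbb{Z}^d\times\mathcal{T}$ and the indicators $A^{\x,\y}_{i,x,y}$ (borrowed from \cite{MN}), whereas you phrase the same bookkeeping directly in terms of ancestral skeletons; the estimates and their assembly are identical.
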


Before starting a proof, we give another representation of $B_{n}$. Let $\{V^{\x}_{n,x}:\x\in {\cal T},(n,x)\in \mathbb{N}\times \mathbb{Z}^d\}$ be $\mathbb{N}$-valued random variables with $P\left(\left.	V^{\x}_{n,x}=k		\right|\omega\right)=q_{n,x}(k)$. Let $\{	X^{\x}_{n,x}:\x\in {\cal T},(n,x)\in \mathbb{N}\times \mathbb{Z}^d		\}$ be i.i.d.\,random variables with $P(X^{\x}_{n,x}=e)=\frac{1}{2d}$ for $e=\pm e_{j}$, $j=1,\cdots,d$ where $e_{j}$ are unit vector on $\mathbb{Z}^d$. $V_{n,x}^{\x}$ denotes the number of offsprings of $\x$ if $\x$ locates at $x$ at time $n$ and $X_{n,x}^{\x}$ denotes the step of $\x$ if it locates at $x$ at time $n$.

We consider the event $\left\{	\text{particle $\y$ exists and locates at site $y$ at time $|\x|=n$}					\right\}$ and its indicator function \begin{align}
B_{n,y}^{\y}={\bf 1}\left\{		\text{particle $\y$ exists and locates at site $y$ at time $|\x|=n$}					\right\}\notag.
\end{align}
Then, it is clear that \begin{align}
B^{{ \x}}_{0,x}&=\delta_{x,\x}=\begin{cases}
1\ \ \text{if $x=0$ and $\x={\bf 1}$},\\
0\ \ \text{otherwise},
\end{cases}				\notag\\
B_{n,y}^{\y}&=\sum_{x,\x}B^{\x}_{n-1,x}{\bf 1}\left\{	X_{n-1,x}^{\x}=y-x,V^{\y}_{n-1,x}\geq \y/\x\geq 1				\right\}\notag\\
&=\sum_{0\to y}\sum_{{\bf 1}\to \y}\prod_{i=0}^{n-1}{\bf 1}\left\{		X_{i,y_{i}}^{\y_i}=y_{i+1}-y_i,V^{\y_{i}}_{i,y_{i}}\geq \y_{i+1}/\y_{i}\geq 1				\right\},\notag\\
\intertext{and}
&B_{n,y}=\sum_{\y}\sum_{0\to y}\sum_{{\bf 1}\to \y}\prod_{i=0}^{n-1}{\bf 1}\left\{		X_{i,y_{i}}^{\y_i}=y_{i+1}-y_i,V^{\y_{i}}_{i,y_{i}}\geq \y_{i+1}/\y_{i}\geq 1				\right\}.\notag
\end{align}
We introduce new Markov chain ${\bf Y}=(Y,\mathbb{Y})$ on $\mathbb{Z}^d\times {\cal T}$ which are determined by \begin{align}
&Y_{0}=0,\Y_{0}={\bf 1}\in T_{0}.				\notag\\
&P_{Y\Y}\left(	\left.		\begin{subarray}{c}	Y_{n+1}=y,\\ \Y_{n+1}=\y		\end{subarray}\right|\begin{subarray}{c}Y_{n}=x,\\ \Y_{n}=\x\end{subarray}	\right)=\begin{cases}
\frac{1}{2d}\sum_{k\geq \y/\x}q({k})\ \ &\text{if }|y-x|=1,\y/\x<\infty,\notag\\
0&\text{otherwise},
\end{cases}
\end{align}
where $q(k)=E[q_{n,x}(k)]$. Let $A_{n,x,y}^{\x,\y}={\bf 1}\left\{X_{n,x}^{\x}=y-x,V^{\x}\geq \y/\x\right\}$. Then, we have the following representation of $B_{n,y}$ \cite{MN}: \begin{align}
&B_{n,y}=E_{Y\Y}\left[\prod_{i=0}^{n-1}\frac{A_{i,Y_{i},Y_{i+1}}^{\Y_{i},\Y_{i+1}}	}{E\left[A_{i,Y_{i},Y_{i+1}}^{\Y_{i},\Y_{i+1}}\right]	}	:Y_{n}=y					\right],\notag
\end{align}
{and also} \begin{align}
&E\left[	\prod_{i=1}^pB_{n,x_i}			\right]=E_{{\bf Y}^1\cdots {\bf Y}^{p}}\left[		\prod_{i=0}^{n-1}E\left[	\frac{\prod_{j=1}^{p}	A^{\Y^j_{i},\Y^{j}_{i+1}}_{i,Y^j_{i},Y^j_{i+1}}			}{\prod_{j=1}^{p}E\left[	A^{\Y^j_{i},\Y^{j+1}_{i+1}}_{i,Y^j_{i},Y^j_{i+1}}				\right]}						\right]		:Y^i_n=x_i\text{ for }1\leq i\leq p						\right]\notag\\
&E\left[	B_{n}^{p}			\right]=E_{{\bf Y}^1\cdots {\bf Y}^{p}}\left[		\prod_{i=0}^{n-1}E\left[	\frac{\prod_{j=1}^{p}	A^{\Y^j_{i},\Y^{j}_{i+1}}_{i,Y^j_{i},Y^j_{i+1}}			}{\prod_{j=1}^{p}E\left[	A^{\Y^j_{i},\Y^{j+1}_{i+1}}_{i,Y^j_{i},Y^j_{i+1}}				\right]}						\right]								\right],\notag
\end{align}
where ${\bf Y}^i=(Y^i,\Y^i)$ are independent copies of ${\bf Y}=(Y,\Y)$.

\begin{proof}[Proof of Lemma \ref{lemma14}]
We remark the following facts:\begin{enumerate}[i)]
\item If $y\not=y'$, then $A^{\x,\y}_{i,x,y}A^{\x,\y'}_{i,x,y'}=0$ almost surely. Especially, for $\left\{{\bf Y}^{j}_{i}:i=0,\cdots, n\right\}$ and $\left\{{\bf Y}^{j'}_{i}:i=0,\cdots,n		\right\}$, if there exists an $i$ such that ${\bf Y}^{j}_{i}={\bf Y}^{j'}_{i}$ and $Y^{j}_{i+1}\not=Y^{j'}_{i+1}$, then\begin{align} 		\prod_{i=0}^{n-1}E\left[	\frac{\prod_{j=1}^{p}	A^{\Y^j_{i},\Y^{j}_{i+1}}_{i,Y^j_{i},Y^j_{i+1}}			}{\prod_{j=1}^{p}E\left[	A^{\Y^j_i,\Y^{j+1}_{i+1}}_{i,Y^j_{i},Y^j_{i+1}}				\right]}						\right]	=0,	\notag
	\end{align}
	almost surely.
\item If $\y/\x=k$, $\y'/\x=\ell$, and $k\leq \ell$, then $	A^{\x,\y}_{i,x,y}A^{\x,\y'}_{i,x,y}			=A^{\x,\y}_{i,x,y}$ almost surely.
\item If $\{\x^{j}:j=1,\cdots ,p\}$ are different from each other and $\y^{j}/\x^{j}=k_{j}$, then $E\left[		\prod_{j=1}^rA^{\x^{j},\y^{j}}_{i,x^j,y^j}		\right]=\left(\frac{1}{2d}\right)^p		\sum_{s_{1}\geq k_{1}}\cdots \sum_{s_{p}\geq k_{p}}E\left[	\prod_{j=1}^{p}{q_{i,x^j}}(s_{j})			\right]	$.
\end{enumerate} 
Thus,  the possible cases are the followings:\begin{align}
&E\left[E_{{\bf Y}^{1}\cdots {\bf Y}^{p}}\left[	\left.					\frac{\prod_{j=1}^{p}	A^{\Y^j_{i},\Y^{j}_{i+1}}_{i,Y^j_{i},Y^j_{i+1}}			}{\prod_{j=1}^{p}E\left[	A^{\Y^j_{i},\Y^{j}_{i+1}}_{i,Y^j_{i},Y^j_{i+1}}				\right]}									\right|	Y_{i}^{j}=x^j,\mathbb{Y}^{j}_{i}=\x^{j}	 \text{ for }j=1,\cdots p	\right]\right]\notag\\
&=\begin{cases}\notag
1						\ \ \ \ \text{$x^{j}$ are different from each others,}\\
{E\left[	\prod_{j=1}^{p}m_{i,y^j}^{(1)}			\right]}		\ \ \ \ \text{if $\x^{j}$ are different from each others,}\\
(A),
\end{cases}
\end{align}
where $(A)$ is the other case described as below. 

We divide the set $\{1,\cdots, p\}$ into the disjoint union such that \begin{align}
\{1,\cdots,p\}=\coprod_{k=j_1}^{j_{p}}I_{k},\label{eqcl}
\end{align}
where $I_{k}=\{		j\in\{1,\cdots,p\}:\x^j=\x^k		\}$ and $j_1,\cdots j_{p}$ is the set of index of equivalence class $I_{k}$. For $\y^j/\x^j=k_j$, we set $K_{j_{\ell}}=\min\{k_j:j\in I_{j_{\ell}}		\}$.
Then, we have that \begin{align}
&E\left[E_{{\bf Y}^{1}\cdots {\bf Y}^{p}}\left[	\left.					\frac{\prod_{j=1}^{p}	A^{\Y^j_{i},\Y^{j}_{i+1}}_{i,Y^j_{i},Y^j_{i+1}}			}{\prod_{j=1}^{p}E\left[	A^{\Y^j_{i},\Y^{j}_{i+1}}_{i,Y^j_{i},Y^j_{i+1}}				\right]}					{\bf 1}\left\{	\Y^{j}_{i+1}=\y^{j}\ \text{for }j=1,\cdots,p			\right\}				\right|	Y_{i}^{j}=x^j,\mathbb{Y}^{j}_{i}=\x^{j}	 \text{ for }j=1,\cdots, p	\right]\right]\notag\\
&\hspace{10em}=E\left[		\prod_{\ell=j_{1}}^{j_{p}}\left(\sum_{k\geq K_{\ell}}q_{i,x^{\ell}}(k)\right)				\right].\notag
\end{align}
By the above argument, we find that ${\bf Y}^{1},\cdots,{\bf Y}^{P}$ evolves according the following steps:
\begin{enumerate}[i)]
\item First, the set process $\{S(m):m=0,\cdots, n\}$ starts from the set $I^{(0)}=\{1,\cdots,p\}$ until time $i^{(1)}$, and then it splits into some sets $I^{(1,1)},\cdots,I^{(1,k^{(1)})}$. ($i^{(1)}$ is the last  time when ${\bf Y}^{j}_{i}$ coincide and $I^{(1,1)},\cdots, I^{(1,k^{(1)})}$ are the equivalent class defined in (\ref{eqcl}) for $\Y^{j}_{i^{(1)}+1}$).
\item When the set process $S(m)=\{I^{(\ell,1)},\cdots, I^{(\ell,k^{(\ell)})}\}$, it jumps to the new sets $\{I^{(\ell+1,1)},\cdots, I^{(\ell+1,k^{(\ell+1)})}\}$ where each $I^{(\ell+1,r)}$ is a partition of some set of $I^{(\ell,1)},\cdots I^{(\ell,k^{(\ell)})}$ at some time $i^{(\ell+1)}$. (${\bf Y}^{(j)}$, $j\in I^{(\ell,s)}$ for each $s=1,\cdots,k^{(\ell)}$ coincides until time $i^{(\ell+1)}$ and $\mathbb{Y}^{j}_{i^{(\ell+1)}+1}\not=\mathbb{Y}^{j'}_{i^{(\ell+1)}+1}$ for some $j,j'\in I^{(\ell,k)}$ for some $k$).
\item If $S(m)=\{\{1\},\cdots,\{p\}\}$, then $S(m)=S(m')$ for $m'\geq m$.
\end{enumerate}

\begin{figure}[h]
\begin{center}
\includegraphics[width=3in,height=2in]{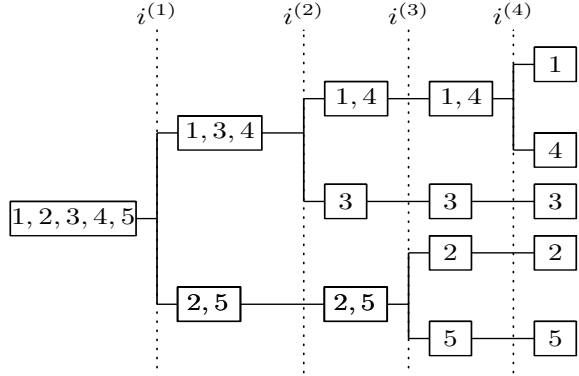}
\caption{When $p=5$, $I^{(0)}=\{1,2,3,4,5\}$.	In this figure, $I^{(1,1)}=\{1,3,4\}$, $I^{(1,2)}=\{2,5\}$, $I^{(2,1)}=\{1,4\}$, $I^{(2,2)}=\{3\}$, and $I^{(2,3)}=\{2,5\}$.  			}
\end{center}
\end{figure}

First, we remark that the combination of $i^{(1)},\cdots, i^{(p-1)}$ (it may stops for less steps) are at most $n^{p}$-th order. Also, \begin{align}
&E\left[E_{{\bf Y}^{1}\cdots {\bf Y}^{p},S}\left[	\left.					\frac{\prod_{j=1}^{p}	A^{\Y^j_{i},\Y^{j}_{i+1}}_{i,Y^j_{i},Y^j_{i+1}}			}{\prod_{j=1}^{p}E\left[	A^{\Y^j_{i},\Y^{j}_{i+1}}_{i,Y^j_{i},Y^j_{i+1}}				\right]}					{\bf 1}\left\{	i^{(\ell)}=i		\right\}				\right|	Y_{i}^{j}=x^j,\mathbb{Y}^{j}_{i}=\x^{j}	 \text{ for }j=1,\cdots, p	\right]\right]\notag\\
&\leq		C(p)K,\notag
\intertext{and }
&E\left[E_{{\bf Y}^{1}\cdots {\bf Y}^{p},S}\left[	\left.					\frac{\prod_{j=1}^{p}	A^{\Y^j_{i},\Y^{j}_{i+1}}_{i,Y^j_{i},Y^j_{i+1}}			}{\prod_{j=1}^{p}E\left[	A^{\Y^j_{i},\Y^{j}_{i+1}}_{i,Y^j_{i},Y^j_{i+1}}				\right]}					{\bf 1}\left\{	i^{(\ell)}\not=i,\text{ for }\ell=1,\cdots,p		\right\}				\right|	Y_{i}^{j}=x^j,\mathbb{Y}^{j}_{i}=\x^{j}	 \text{ for }j=1,\cdots, p	\right]\right]\notag\\
&\leq		\prod_{k\in {\cal K}}E\left[	\left(m_{i,x^{k}}\right)^{\sharp\left\{j:x^j=x^k				\right\}}			\right]\leq \prod_{k\in{\cal K}}E\left[		(m_{i,x^k})^p		\right]^{\sharp\left\{	j:x_j=x_k			\right\}/p}\leq E	\left[		(m_{i,x^k})^p		\right]^{{\bf 1}\left\{	x^j=x^k, \text{ for some }j\not=k		\right\}}	,\notag
\end{align}
where ${\cal K}$ be the set of index for equivalence class $\left\{j:x^j=x^k				\right\}$.

Thus, we have that \begin{align}
E\left[B_{n}^P\right]\leq C(p,K)n^{(p-1)}E_{{\bf Y}^1\cdots {\bf Y}^p}\left[	
	E\left[	\left(m_{n,x}\right)^{p}				\right]^{\sharp\left\{i\leq n	:	Y^j_i=Y^{j'}_i\ \text{for }j\not=j'\in \{1,\cdots ,p\}			\right\}}				
		\right]		\notag.
\end{align}
The latter part of Lemma \ref{lemma14} can be proved by the same argument.

\end{proof}

\begin{cor}\label{cor1}
Under the same assumption in Lemma \ref{lemma14}, \begin{align}
&E\left[	\prod_{j=1}^{q}\prod_{i=1}^{p_j}B^{(j)}_{n,x_{(j,i)}}			\right]\notag\\
&\leq C({\bf p},K)n^{\left(\sum_{j=1}^{q}p_j-q\right)}E_{\left({\bf Y}^{j,i}\right)}\left[	E\left[\left(	m_{0,0}\right)^{\sum_{j=1}^{q}p_j}	\right]	^{\sharp\left\{\begin{subarray}{l}	k\leq n:Y^{j_1,i_1}_{k}=Y^{j_2,i_2}_{k},\ \text{for }\\ (j_1,i_1)\not=(j_2,i_2)\in \left\{(j,i):j=1,\cdots, q,		i=1,\cdots,p_j\right\}	\end{subarray}	\right\}}	:Y^{(j,i)}_n=x_{j,i}		\right]\notag,
\end{align}
where $B^{(j)}_{n,x}$ is the number of particles from initial particle $j$ at site $x$ at time $n$. 
\end{cor}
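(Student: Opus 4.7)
The plan is to adapt the ``set process'' bookkeeping from the proof of Lemma \ref{lemma14} to the multi-ancestor setting. First I would extend the many-to-few representation. Labelling the $q$ initial particles as $1,\ldots,q\in T_0$, for each $j$ I introduce $p_j$ copies $(Y^{(j,i)},\Y^{(j,i)})$ of the joint chain ${\bf Y}=(Y,\Y)$ on $\mathbb{Z}\times {\cal T}$ starting from $(0,j)$, conditionally independent given the environment, so that
\begin{align*}
E\Bigl[\prod_{j=1}^q\prod_{i=1}^{p_j} B^{(j)}_{n,x_{(j,i)}}\Bigr]
=E_{({\bf Y}^{(j,i)})}\!\left[\prod_{k=0}^{n-1}E\!\left[\frac{\prod_{j,i}A^{\Y^{(j,i)}_k,\Y^{(j,i)}_{k+1}}_{k,Y^{(j,i)}_k,Y^{(j,i)}_{k+1}}}{\prod_{j,i}E\bigl[A^{\Y^{(j,i)}_k,\Y^{(j,i)}_{k+1}}_{k,Y^{(j,i)}_k,Y^{(j,i)}_{k+1}}\bigr]}\right]:Y^{(j,i)}_n=x_{(j,i)}\right].
\end{align*}
This is precisely the formula used in the proof of Lemma \ref{lemma14}, now applied with $\sum p_j$ walks.

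Next I would run the set process $S(m)$ on the index set $\coprod_{j}\{(j,i):i\le p_j\}$, built exactly as before: two indices are in the same block at time $m$ iff the corresponding labels $\Y^{(\cdot)}_m$ coincide. The structural observation that distinguishes the corollary from the lemma is that indices with different $j$-components can never share a block, since their Ulam--Harris labels $\Y^{(j,i)}_0=j\in T_0$ already disagree at the root and the chain can only refine, never merge. Hence $S(0)$ consists of the $q$ seed blocks $I_j=\{(j,i):i\le p_j\}$, and from there on the set process is the superposition of $q$ independent refinement processes.

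On each interval $[i^{(\ell)},i^{(\ell+1)})$ between split times, the inner conditional expectation is bounded, via item (iii) from the proof of Lemma \ref{lemma14}, by a product over current blocks $B$ of $E[(m^{(1)}_{0,0})^{|B|}]$ evaluated at those sites where multiple walks coincide; H\"older's inequality then lumps these into $E[(m^{(1)}_{0,0})^{\sum p_j}]$ raised to the total coincidence count $\sharp\{k\le n: Y^{(j_1,i_1)}_k=Y^{(j_2,i_2)}_k\text{ for some }(j_1,i_1)\ne (j_2,i_2)\}$. Summing over the possible split times $i^{(1)}<\cdots<i^{(L)}$, each split reduces the number of blocks by at least one, and the number of blocks drops from $q$ at time $0$ to $\sum p_j$ singletons at most, so $L\le \sum p_j-q$, yielding the combinatorial factor $n^{\sum p_j-q}$ (and a constant $C({\bf p},K)$ absorbing the uniformly bounded per-split factors coming from Assumption A / $E[m^{(p)}_{0,0}]\le K$).

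The main obstacle is bookkeeping: one must verify that the exponent of $n$ is genuinely $\sum p_j-q$ and not $\sum p_j-1$, and this is precisely where the disjointness of initial Ulam--Harris labels enters—without it the argument of Lemma \ref{lemma14} would produce the weaker bound $n^{\sum p_j-1}$. Once this is in place, restricting the walks to the events $\{Y^{(j,i)}_n=x_{(j,i)}\}$ is harmless, since the upper bound on the inner conditional expectation is path-functional-free in the endpoints, exactly as in the endpoint-conditioned statement of Lemma \ref{lemma14}.
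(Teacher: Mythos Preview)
Your approach is essentially the paper's own: start the set process $S$ already partitioned into the $q$ ancestor blocks (the paper phrases this as ``regard $i^{(1)}=-1$ and $S(0)=\{\{1,\ldots,p_1\},\ldots\}$''), observe that refinement to singletons therefore requires at most $\sum_j p_j-q$ splits, and otherwise repeat the proof of Lemma~\ref{lemma14} verbatim. One wording slip: splits \emph{increase} the number of blocks (from $q$ up to $\sum_j p_j$), not decrease it---but your count $L\le\sum_j p_j-q$ is the right one.
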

\begin{proof}
If we regard $i^{(1)}=-1$ and $S(0)=\{\{1,\cdots, p_1\},\cdots,\{\sum_{j=1}^{q-1}p_j+1,\cdots,\sum_{j=1}^{q}p_j\}			\}$, then $S(m)$ stops at $\{\{1\},\cdots,\{\sum_{j=1}^qp_j\}\}$ at most $\sum_{j=1}^{q}p_j-q$ jumps.
\end{proof}


\end{document}